\providecommand\@dotsep{5}
\def\listtodoname{List of Todos}
\def\listoftodos{\@starttoc{tdo}\listtodoname}
\numberwithin{equation}{section}
\newcommand{\h}{H^{s}_{\e}}
\newcommand{\R}{\mathbb{R}}
\newcommand{\2}{2^{*}_{s}}
\newcommand{\C}{\mathbb{C}}
\newcommand{\N}{\mathcal{N}}
\DeclareMathOperator{\dive}{div}
\DeclareMathOperator{\supp}{supp}
\DeclareMathOperator{\e}{\varepsilon}
\newtheorem{prop}{Proposition}[section]
\newtheorem{lem}{Lemma}[section]
\newtheorem{thm}{Theorem}[section]
\newtheorem{cor}{Corollary}[section]
\newtheorem{remark}{Remark}[section]
\keywords{Fractional magnetic operators, Schr\"odinger-Poisson type equation, critical exponent, variational methods}
\subjclass[2010]{35A15, 35R11, 35S05, 58E05.}
\begin{document}
\title[Fractional Schr\"odinger-Poisson type equation with magnetic field]{Multiplicity and concentration results for a fractional Schr\"odinger-Poisson type equation with magnetic field}

\author[V. Ambrosio]{Vincenzo Ambrosio}
\address{Vincenzo Ambrosio\hfill\break\indent 
Dipartimento di Ingegneria Industriale e Scienze Matematiche,  \hfill\break\indent
Universit\`a Politecnica delle Marche, \hfill\break\indent
via Brecce Bianche 12, \hfill\break\indent
60131 Ancona (Italy)}
\email{vincenzo.ambrosio2@unina.it}


\subjclass[2010]{35A15, 35R11, 35S05, 58E05.}

\keywords{Fractional magnetic operators, Nehari manifold, Ljusternick-Schnirelmann Theory}

\begin{abstract}
This paper is devoted to the study of fractional Schr\"odinger-Poisson type equations with magnetic field of the type
\begin{equation*}
\varepsilon^{2s}(-\Delta)_{A/\varepsilon}^{s}u+V(x)u+\e^{-2t}(|x|^{2t-3}*|u|^{2})u=f(|u|^{2})u \quad \mbox{ in } \mathbb{R}^{3},
\end{equation*}
where $\varepsilon>0$ is a parameter, $s,t\in (0, 1)$ are such that $2s+2t>3$, $A:\mathbb{R}^{3}\rightarrow \mathbb{R}^{3}$ is a smooth magnetic potential, $(-\Delta)^{s}_{A}$ is the fractional magnetic Laplacian, 
$V:\R^{3}\rightarrow \R$ is a continuous electric potential and $f:\mathbb{R}\rightarrow \mathbb{R}$ is a $C^{1}$ subcritical nonlinear term. 
Using variational methods, we obtain the existence, multiplicity and concentration of nontrivial solutions for $\e>0$ small enough.
\end{abstract}

\maketitle

\section{introduction}

\noindent 
This paper deals with the following fractional nonlinear Schr\"odinger-Poisson type equation
\begin{equation}\label{P}
\varepsilon^{2s}(-\Delta)_{A/\varepsilon}^{s}u+V(x)u+\e^{-2t}(|x|^{2t-3}*|u|^{2})u=f(|u|^{2})u \quad \mbox{ in } \mathbb{R}^{3},
\end{equation}
where $\e>0$ is a parameter and $s,t\in (0, 1)$ are such that $2s+2t>3$.
Throughout the paper, we assume that $V:\R^{3}\rightarrow \R$ is a continuous potential verifying the following condition 
\begin{equation}
\label{RV}
\tag{RV}
V_{\infty}=\liminf_{|x|\rightarrow \infty} V(x)>V_{0}=\inf_{x\in \R^{N}} V(x)>0
\end{equation}
introduced by Rabinowitz in \cite{Rab}. Here we assume that $V_{\infty}\in (0, \infty]$.
The nonlinearity $f: \R\rightarrow \R$ is a $C^{1}$ function such that $f(t)=0$ for $t\leq 0$, and 
\begin{compactenum}[$(f_1)$]
\item $\displaystyle{\lim_{t\rightarrow 0} \frac{f(t)}{t}=0}$;
\item there exists $q\in (4, 2^{*}_{s})$, where $2^{*}_{s}=6/(3-2s)$, such that $\lim_{t\rightarrow \infty} f(t)/t^{\frac{q-2}{2}}=0$;
\item there exists $\theta>4$ such that $0<\frac{\theta}{2} F(t)\leq t f(t)$ for any $t>0$, where $F(t)=\int_{0}^{t} f(\tau)d\tau$;
\item $t\mapsto \frac{f(t)}{t}$ is increasing in $(0, \infty)$;
\item there exist $\sigma\in (4, 2^{*}_{s})$ and a constant $C_{\sigma}>0$ such that $f'(t)t-f(t)\geq C_{\sigma} t^{\frac{\sigma-2}{2}}$ for any $t\geq 0$.
\end{compactenum} 
We note that assumption $(f_2)$ forces to be $s\in (3/4,1)$.
The nonlocal operator $(-\Delta)^{s}_{A}$ is the fractional magnetic Laplacian which may be defined along smooth functions $u:\R^3\to\C$ by setting
\begin{equation}\label{operator}
(-\Delta)^{s}_{A}u(x)
:=c_{3,s} \lim_{r\rightarrow 0} \int_{B_{r}^{c}(x)} \frac{u(x)-e^{\imath (x-y)\cdot A(\frac{x+y}{2})} u(y)}{|x-y|^{3+2s}} dy,
\quad
c_{3,s}:=\frac{4^{s}\Gamma\left(\frac{3+2s}{2}\right)}{\pi^{3/2}|\Gamma(-s)|}.
\end{equation}
This operator has been introduced in \cite{DS, I10} and replies essentially on the L\'evy-Khintchine formula for the generator of a general L\'evy process. 
In absence of magnetic field, that is $A\equiv 0$, the operator $(-\Delta)^{s}_{A}$ reduces to the fractional Laplacian operator $(-\Delta)^{s}$ which has been extensively considered in the last years due to its great application in a lot of pure and applied mathematical problems; see \cite{DPV, MBRS} for more details.
As showed in \cite{PSV} and \cite{SV}, up to correcting the operator by the factor $(1-s)$, it is possible to see that, as $s\rightarrow 1$, $(-\Delta)^{s}_{A}u$ converges to the magnetic Laplacian $-(\nabla-\imath A)^{2}u$ defined as
$$
-(\nabla -\imath A)^{2}u=-\Delta u+2\imath A(x)\cdot \nabla u+|A(x)|^{2}u+\imath u \dive(A(x));
$$
see \cite{LL} for more details. 
In fact, the study of our problem \eqref{P} is motivated by some interesting results obtained for the following nonlinear Schr\"odinger equation with magnetic field
$$
-\left(\frac{\e}{\imath}\nabla -A(z)\right)^{2}u+V(x)u=f(x, |u|^{2})u \quad \mbox{ in } \R^{N},
$$
for which several existence and multiplicity results have been established; see \cite{AFF, AFY, AS, Cingolani, CCS, CS, EL, K}.\\
This equation plays a very important role when we look for standing wave solutions $\psi(x,t)=u(x)e^{-\imath\frac{E}{\hbar}t}$, with $E\in \R$, to the following time dependent magnetic Schr\"odinger equation
$$
\imath \hbar \frac{\partial \psi}{\partial t}=\left( \frac{\hbar}{\imath}\nabla-A(z) \right)^{2}\psi+(V(z)+E)\psi-f(|\psi|^{2})\psi \quad \mbox{ in } \R^{N}\times \R,
$$
where $\hbar$ is the Planck's constant.
Then, one is interested in the existence and the shape of such solutions when $\hbar=\e\rightarrow 0$. Indeed, it is well known that the transition from quantum mechanics to classical mechanics can be formally performed by sending the Planck's constant to zero.

When $A\equiv 0$ and $\phi_{|u|}^{t}:=|x|^{2t-3}*|u|^{2}=0$, equation \eqref{P} becomes the fractional Schr\"odinger equation
\begin{equation}\label{FSE}
\e^{2s}(-\Delta)^{s}u+V(x)u=f(u^{2})u \quad \mbox{ in } \R^{3},
\end{equation}
formulated by Laskin \cite{Laskin} as a result of expanding the Feynman path integral, from the Brownian-like to the L\'evy-like quantum mechanical paths. We recall that equation \eqref{FSE} has attracted the attention of many researchers and different results concerning the existence, multiplicity and concentration behavior as $\e\rightarrow 0$ have been established for it; see for instance \cite{AM, A1, A3, DDPW, DMV, Secchi}. 

On the other hand, when $A\equiv 0$ and $\phi_{|u|}^{t}\neq 0$, equation \eqref{P} can be deduced from a fractional Schr\"odinger-Poisson system
of the type 
\begin{equation}\label{FSPS} 
\left\{
\begin{array}{ll}
\e^{2s}(-\Delta)^{s} u+V(x) \phi u = g(x, u) &\mbox{ in } \R^{3} \\
\e^{2t}(-\Delta)^{t} \phi= u^{2} &\mbox{ in } \R^{3}.
\end{array}
\right.
\end{equation}
When $s=1$,  system  \eqref{FSPS} becomes the classical Schr\"odinger-Poisson system which
arises in the study of quantum mechanics models \cite{BBL} and in semiconductor theory \cite{Mar}. 
These systems have been widely studied in the last two decades; see \cite{AdAP, BF, ruiz, ZZ} for unperturbed problems (i.e. $\e=1$) and \cite{A, DW, He, HL, WTXZ, Y} for perturbed problems (i.e. $\e>0$ small).

In the nonlocal framework, with $A\equiv 0$, we can mention only few results for \eqref{FSPS}. For instance, Giammetta \cite{G} investigated the local and global well-posedness  of a one dimensional fractional Schr\"odinger-Poisson system in which $\e=1$ and the fractional diffusion appears only in the Poisson equation. 
Zhang et al. \cite{ZDS} dealt with the existence of positive solutions to \eqref{FSPS} involving a general nonlinearity having subcritical or critical growth.  
Murcia and Siciliano \cite{MS} proved that, for $\e>0$ small enough, the number of positive solutions is estimated below by the Ljusternick-Schnirelmann category of the set of minima of the potential.
Teng \cite{teng}  studied the existence of ground state solutions for a critical fractional Schr\"odinger-Poisson system like \eqref{FSPS} with $\e=1$.
Liu and Zhang \cite{LZ} focused on the multiplicity and concentration of solutions to \eqref{FSPS} involving the critical exponent and under assumption \eqref{RV}.
On the other hand, in recent years, appeared some interesting results for fractional magnetic Schr\"odinger equations of the type
\begin{equation}\label{FMSE}
\e^{2s}(-\Delta)^{s}_{A}u+V(x)u=f(x, |u|^{2})u \quad \mbox{ in } \R^{N}.
\end{equation}
For instance, d'Avenia and Squassina \cite{DS} considered the existence of ground state solutions for an autonomous fractional magnetic problem. Zhang et al. \cite{ZSZ} focused on the study of nontrivial solutions for a critical magnetic Schr\"odinger equation. Mingqi et al. \cite{MPSZ} dealt with the existence and multiplicity for a fractional magnetic Kirchhoff problem with subcritical nonlinearities. Fiscella et al. \cite{FPV} obtained a multiplicity result for a fractional magnetic boundary value problem. In \cite{AD} the author and d'Avenia investigated the existence and multiplicity of nontrivial solutions to \eqref{FMSE} under the condition \eqref{RV}. For other papers concerning the fractional magnetic Laplacian we refer to \cite{Adpde, Amjm, PSV0} and references therein.
After an accurate bibliographic review, we have realised that no results for fractional magnetic Schr\"odinger-Poisson equations are available in literature.
Strongly motivated by this fact and by the papers \cite{AFF, AD, He}, in this work we focus our attention on the existence, multiplicity and concentration of nontrivial solutions to \eqref{P}. In particular way, we are interested in relating the number of nontrivial solutions of \eqref{P} with the set of global minima of $V$ given by
\begin{equation}\label{defM}
M=\{x\in \R^{3}: V(x)=V_{0}\}.
\end{equation}
For any $\delta>0$, we also define
\begin{equation}
M_{\delta}=\{x\in \R^{3}: \operatorname{dist}(x, M)\leq \delta\}.
\end{equation}
In order to state precisely our main result, we recall that if $Y$ is a given closed set of a topological space $X$, we denote by $cat_{X}(Y)$ the Ljusternik-Schnirelmann category of $Y$ in $X$, that is the least number of closed and contractible sets in $X$ which cover $Y$. 
Then we prove the following main result:
\begin{thm}\label{thm1}
Assume that \eqref{RV} and $(f_1)$-$(f_5)$ hold. Then, for any given $\delta>0$ there exists $\e_{\delta}>0$ such that, for any $\e\in (0, \e_{\delta})$, problem \eqref{P} has at least $cat_{M_{\delta}}(M)$ nontrivial solutions. Moreover, if $u_{\e}$ denotes one of these solutions and $x_{\e}$ be the global maximum point of $|u_{\e}|$, we have 
$$
\lim_{\e\rightarrow 0} V(x_{\e})=V_{0}
$$	
and there exists $\tilde{C}>0$ such that
$$
|u_{\e}(x)|\leq  \frac{\tilde{C} \e^{3+2s}}{\e^{3+2s}+|x-x_{\e}|^{3+2s}} \quad \forall x\in \R^{3}.
$$
\end{thm}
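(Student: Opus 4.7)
The strategy I would follow is the by-now standard Ljusternik--Schnirelmann photography method for singularly perturbed elliptic problems, adapted here to the fractional magnetic setting with a Hardy--Littlewood--Sobolev nonlocal term. First I would set up the variational framework on the magnetic fractional Sobolev space $\h$ naturally associated to $\e^{2s}(-\Delta)^{s}_{A/\e}+V(x)$. The Poisson term $\e^{-2t}(|x|^{2t-3}*|u|^{2})u$ is handled by the usual reduction: the condition $2s+2t>3$ ensures, via Hardy--Littlewood--Sobolev, that $u\mapsto\int \phi^{t}_{|u|}|u|^{2}\,dx$ is well defined and $C^{1}$ on $\h$. Hypotheses $(f_1)$--$(f_4)$ then yield mountain-pass geometry for the associated functional $\J_\e$, and the Nehari manifold $\N_\e$ is a $C^{1}$ submanifold on which critical points of $\J_\e$ are exactly the solutions of \eqref{P}. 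The strict monotonicity in $(f_4)$ is what makes the fibering map $t\mapsto \J_\e(tu)$ have a unique maximum, ensuring that $\N_\e$ is homeomorphic to the unit sphere of $\h$ and that $c_\e:=\inf_{\N_\e}\J_\e$ is a well defined minimax level.

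Next I would study the autonomous limit problem at $V\equiv V_0$, $A\equiv 0$, whose Nehari level $c_{V_0}$ plays the role of compactness threshold. I would prove $c_\e\to c_{V_0}$ as $\e\to 0$ (upper bound by test functions concentrated at points of $M$, lower bound via \eqref{RV}) and establish a local Palais--Smale condition for $\J_\e$ on $\N_\e$ below $c_{V_0}+h(\e)$ for some $h(\e)\to 0$. The key difficulty is simultaneously controlling the magnetic phase and the convolution when mass may split: a diamagnetic-type inequality for $(-\Delta)^{s}_{A}$, a Brezis--Lieb-type identity for $u\mapsto \int \phi^{t}_{|u|}|u|^{2}$, and $(f_5)$ together rule out vanishing and dichotomy, while escape to infinity is forbidden by $V_\infty>V_0$.

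With compactness in hand, I would implement the Benci--Cerami photography scheme. Fixing a positive radial ground state $w$ of the autonomous problem at $V_0$, for $y\in M$ I define
\begin{equation*}
\Phi_\e(y)(x)=\eta\bigl(|\e x-y|\bigr)\,w\!\left(\frac{\e x-y}{\e}\right) e^{\imath A(y)\cdot\left(x-y/\e\right)}
\end{equation*}
with $\eta$ a smooth cutoff, and then project onto $\N_\e$ via the unique positive Nehari multiplier. One checks that $\J_\e(\Phi_\e(y))\to c_{V_0}$ uniformly in $y\in M$. A barycenter map $\beta_\e$ defined on the sublevel $\N_\e^{c_{V_0}+h(\e)}$ through a weighted center of mass of $|u|^{2}$ is shown, by a compactness-plus-concentration argument, to take values in $M_\delta$ for $\e$ small, and to satisfy $\beta_\e\circ\Phi_\e\simeq \mathrm{id}_{M}$. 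The classical category inequality then produces at least $\mathrm{cat}_{M_\delta}(M)$ critical points of $\J_\e$ in the sublevel, hence as many nontrivial solutions of \eqref{P}.

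Finally, the concentration and decay properties are obtained by a rescaling/comparison argument. Given a solution $u_\e$ with maximum point $x_\e$ of $|u_\e|$, set $v_\e(x)=u_\e(\e x+x_\e)$; up to subsequences, $|v_\e|$ converges to a ground state of the autonomous problem at $V(x_0)$, and the minimality $c_\e\to c_{V_0}$ forces $V(x_0)=V_0$, giving $V(x_\e)\to V_0$. A Moser/De Giorgi iteration for the fractional magnetic equation yields a uniform $L^{\infty}$ bound on $|v_\e|$ with decay at infinity; comparison against the explicit barrier $C/(1+|x|^{3+2s})$, which is known to control $s$-subharmonic functions of the appropriate type (as in \cite{AD}), yields the asserted polynomial decay of $|u_\e|$ around $x_\e$. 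The principal technical obstacle throughout is the Palais--Smale analysis, where one must simultaneously handle the lack of translation invariance of the magnetic phase and the nonlocal Hardy--Littlewood--Sobolev term in the spirit of Lions' concentration-compactness; this is where the hypothesis $(f_5)$ and the strict inequality in \eqref{RV} are essential.
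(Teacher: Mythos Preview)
Your proposal is correct and follows essentially the same route as the paper: the Benci--Cerami/Cingolani--Lazzo photography scheme on the Nehari manifold, with the diamagnetic inequality, a Brezis--Lieb splitting for the Hartree term, and the comparison $m_{V_0}<m_{V_\infty}$ supplying the Palais--Smale threshold, followed by Moser iteration and a barrier comparison for the decay. The only point worth flagging is that the comparison step requires first passing from the complex-valued magnetic equation to a real weak subsolution inequality for $|u_\e|$ (a Kato-type argument, Lemma~\ref{moser} in the paper), which you implicitly subsume under the reference to \cite{AD}; also, the paper's barycenter is weighted with $|u|^{4}$ rather than $|u|^{2}$, a harmless variant.
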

The proof of Theorem \ref{thm1} is obtained applying suitable variational methods.
Firstly, we use the change of variable $x\mapsto \e x$ to see that problem (\ref{P}) is equivalent to the following one
\begin{equation}\label{Pe}
(-\Delta)_{A_{\e}}^{s} u + V_{\e}( x)u +(|x|^{2t-3}*|u|^{2})u=  f(|u|^{2})u  \mbox{ in } \R^{3},
\end{equation}
where $A_{\e}(x)=A(\e x)$ and $V_{\e}(x)=V(\e x)$. Then, we look for weak solutions to \eqref{Pe} studying the critical points of the corresponding Euler-Lagrange functional. The assumption on the behavior of $V$ at infinity and the superlinear-4 growth condition on $f$, will play a fundamental role to deduce some compactness properties; see Proposition \ref{prop2.2}. The H\"older regularity of the magnetic field together with the fractional diamagnetic inequality (Lemma \ref{DI}) and some interesting decay properties of the positive solutions of the limit problem associated with \eqref{Pe} (see proof of Lemma \ref{FS}), will be crucial to obtain the existence of a solution to \eqref{Pe} for small $\e$; see Theorem \ref{Exthm}. We point out that the restriction $2s+2t>3$ will be used to prove that the operator $\Psi(u)=\int_{\R^{3}} \phi^{t}_{|u|} |u|^{2}dx$ and its differential possess a Brezis-Lieb splitting property \cite{BL}; see Lemma \ref{CCS}.
After that, we use some appropriate tools like the barycenter map and the Ljusternik-Schnirelmann theory to prove a multiplicity result for \eqref{Pe}.  Finally, we study the concentration of solutions by combining a Moser iteration scheme \cite{Moser} with an approximation argument inspired by the Kato's inequality \cite{Kato} for the magnetic Laplacian; see Lemma \ref{moser}. We also provide a decay estimate for the modulus of solutions to \eqref{P} with the help of papers \cite{AM, FQT}.

The paper is organized as follows: in Section $2$ we recall some results for the fractional magnetic spaces and we give some useful lemmas. In Section $3$ we introduce the functional associated with \eqref{Pe} and we also consider the corresponding  autonomous problem. In Section $4$ we study the compactness properties of the functional and in Section $5$ we give a first existence result. The Section $6$ is dedicated to the multiplicity result for \eqref{P} and in the last section we study the behavior of maximum points of the modulus of nontrivial solutions.

\section{Preliminaries}\label{sec2}
For the reader's convenience, in this section, we fix the notations and we give some lemmas which will be used in the next sections. 
Let us denote by $L^{2}(\R^{3}, \C)$ the set of functions $u:\R^{3}\rightarrow \C$ such that $\int_{\R^{3}}|u|^{2}\, dx<\infty$.
It is clear that $L^{2}(\R^{3}, \C)$ is a Hilbert space endowed with the inner product
$$
(u,v)_{2}=\int_{\R^{3}} u\bar{v}\, dx,
$$
where the bar denotes the complex conjugation.
Let $A\in C(\R^{3}, \R^{3})$ be a continuous magnetic field. Consider the magnetic Gagliardo semi-norm of a function function $u:\R^3\to\C$ by setting
$$
[u]^{2}_{A}:=\frac{c_{3,s}}{2}\iint_{\R^{6}} \frac{|u(x)-e^{\imath (x-y)\cdot A(\frac{x+y}{2})} u(y)|^{2}}{|x-y|^{3+2s}} \, dxdy,
$$
and set
$$
D_A^s(\R^3,\C)
:=
\left\{
u\in L^{2_s^*}(\R^3,\C) : [u]^{2}_{A}<\infty
\right\}.
$$
Let us introduce the Hilbert space
$$
H^{s}_{\e}:=
\left\{
u\in D_{A_{\e}}^s(\R^3,\C): \int_{\R^{3}} V_{\e} (x) |u|^{2}\, dx <\infty
\right\}
$$ 
endowed with the scalar product
\begin{align*}
\langle u , v \rangle_{\e}
&:=
\Re	\int_{\R^{3}} V_{\e} (x) u \bar{v} dx\\
&\qquad
+ \frac{c_{3,s}}{2}\Re\iint_{\R^{6}} \frac{(u(x)-e^{\imath(x-y)\cdot A_{\e}(\frac{x+y}{2})} u(y))\overline{(v(x)-e^{\imath(x-y)\cdot A_{\e}(\frac{x+y}{2})}v(y))}}{|x-y|^{3+2s}} dx dy
\end{align*}
and let
$$
\|u\|_{\e}:=\sqrt{\langle u , u \rangle_{\e}}.
$$
We recall the following useful properties for the space $\h$ (see \cite{AD, DS} for more details):
\begin{lem}\label{density}
The space $\h$ is complete and $C_c^\infty(\R^3,\C)$ is dense in $\h$. 
\end{lem}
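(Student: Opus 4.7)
The proof splits into two parts---completeness and density---both modeled on the non-magnetic fractional Sobolev theory and adapted to the phase $e^{\imath(x-y)\cdot A_\e(\frac{x+y}{2})}$, in the spirit of d'Avenia--Squassina \cite{DS} and \cite{AD}. For completeness, I would start from a Cauchy sequence $\{u_n\}\subset\h$. Because $V_\e\ge V_0>0$, the weighted $L^2$ part of $\|\cdot\|_\e$ dominates the ordinary $L^2$-norm, so $\{u_n\}$ converges to some $u$ in $L^2(\R^3,\C)$. In parallel, the magnetic Gagliardo integrands form a Cauchy sequence in $L^2(\R^6,\C)$ with respect to the measure $|x-y|^{-3-2s}\,dx\,dy$, and hence converge to some $U(x,y)$. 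Passing to a subsequence with $u_n\to u$ almost everywhere, continuity of the phase factor identifies $U(x,y)$ pointwise a.e.\ with $u(x)-e^{\imath(x-y)\cdot A_\e(\frac{x+y}{2})}u(y)$, so Fatou's lemma places $u$ in $\h$ and upgrades the convergence to the $\h$-norm.

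For the density of $C_c^\infty(\R^3,\C)$ in $\h$, the plan is a standard two-step approximation: truncate to compact support, then mollify. In the truncation step, I would fix $\eta\in C_c^\infty(\R^3,\R)$ with $\eta\equiv 1$ on $B_1$, set $\eta_R(x):=\eta(x/R)$, and prove that $\eta_R u\to u$ in $\h$ as $R\to\infty$. In the mollification step, I would take $u_\delta:=\rho_\delta * u$ with a standard mollifier and prove $u_\delta\to u$ as $\delta\to 0$. Both reductions rest on the algebraic identity
\[
u(x)-e^{\imath(x-y)\cdot A_\e(\frac{x+y}{2})}u(y) \;=\; \bigl(u(x)-u(y)\bigr) + u(y)\bigl(1-e^{\imath(x-y)\cdot A_\e(\frac{x+y}{2})}\bigr),
\]
which splits the magnetic Gagliardo contribution into a purely fractional piece (amenable to the classical density theory in $H^s(\R^3,\C)$) and a magnetic remainder involving the phase only.

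The magnetic remainder is the principal obstacle. Using the elementary bound $|1-e^{\imath\theta}|\le \min(|\theta|,2)$, its square is controlled by
\[
\iint_{\R^6} |u(y)|^2\,\min\!\bigl(|x-y|^2\,|A_\e(\tfrac{x+y}{2})|^2,\,4\bigr)\,\frac{dx\,dy}{|x-y|^{3+2s}}.
\]
I would split this integral over $\{|x-y|\le 1\}$ and its complement. Near the diagonal, local boundedness of $A_\e$ from continuity of $A$ combines with integrability of $|x-y|^{-1-2s}$ on small balls (finite for $s<1$) and with $u\in L^2$ to yield a finite bound; away from the diagonal, the integrand is at most a constant times $|u(y)|^2|x-y|^{-3-2s}$, controlled by $\|u\|_{L^2}^2$. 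Once these estimates are in hand, dominated convergence in both the truncation and the mollification steps closes the density argument, the completeness piece being largely formal.
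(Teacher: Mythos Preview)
The paper does not prove this lemma; it is quoted from \cite{AD,DS}. Your completeness argument is standard and fine. The density argument, however, has a genuine gap in the truncation step.

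Your algebraic identity splits $[w]_{A_\e}^2$ into the ordinary Gagliardo piece $[w]^2$ plus a magnetic remainder, and you then appeal to ``classical density theory in $H^s(\R^3,\C)$'' for the first piece. Applied to the truncation step, this requires showing $\eta_R u\to u$ in the non-magnetic $H^s$ seminorm, which presupposes $u\in H^s(\R^3,\C)$. But membership in $\h$ does \emph{not} imply $u\in H^s(\R^3,\C)$ when $A$ is unbounded: only $|u|\in H^s(\R^3,\R)$ is guaranteed (Lemma~\ref{DI}). Equivalently, your estimate of the magnetic remainder invokes ``local boundedness of $A_\e$'', but the integral runs over all $y\in\R^3$ with $|x-y|\le 1$, so the relevant bound is $\int_{\R^3}|u(y)|^2\sup_{|z-y|\le 1}|A_\e(z)|^2\,dy$, which need not be finite from $u\in L^2$ alone. (In the local case $s=1$ one can build explicit $u\in H^1_A\setminus H^1$ whenever $A$ grows; the fractional picture is analogous.)

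The fix is to use, for the truncation step, the decomposition that keeps the phase attached to the increment of $u$:
\[
\eta_R(x)u(x)-e^{\imath(x-y)\cdot A_\e(\frac{x+y}{2})}\eta_R(y)u(y)
=\eta_R(x)\bigl(u(x)-e^{\imath(x-y)\cdot A_\e(\frac{x+y}{2})}u(y)\bigr)+\bigl(\eta_R(x)-\eta_R(y)\bigr)e^{\imath(x-y)\cdot A_\e(\frac{x+y}{2})}u(y).
\]
The first term is dominated by $[u]_{A_\e}$ and handled by dominated convergence; the second involves only $|\eta_R(x)-\eta_R(y)|$ and $|u(y)|$, with $A$ disappearing via $|e^{\imath\theta}|=1$. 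This is exactly the mechanism behind Lemma~\ref{truncation} (from \cite{ZSZ}). Once $u$ has compact support, your splitting \emph{does} work---because then $\tfrac{x+y}{2}$ stays in a fixed compact set on the near-diagonal region---and the mollification step proceeds as you describe.
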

\begin{lem}\label{DI}
If $u\in H_{\e}^s$, then $|u|\in H^s(\R^3,\R)$ and the following fractional diamagnetic inequality holds
\begin{equation*}
[|u|]^2 \leq [u]^{2}_{A_{\e}}
\end{equation*}
where
$$
[u]^2 := \frac{c_{3,s}}{2}\iint_{\R^{6}} \frac{|u(x)- u(y)|^{2}}{|x-y|^{3+2s}} \, dxdy.
$$
\end{lem}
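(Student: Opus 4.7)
The plan is to derive the inequality from the pointwise estimate
\[
\bigl||u(x)| - |u(y)|\bigr| \leq \bigl|u(x) - e^{\imath(x-y)\cdot A_{\e}(\frac{x+y}{2})} u(y)\bigr|
\]
valid for almost every $(x,y)\in \R^6$, and then integrate against the kernel $|x-y|^{-(3+2s)}$.

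The pointwise estimate is elementary: for any $z,w\in\C$ and any $\theta\in\R$, the modulus is invariant under multiplication by a unimodular factor, so $|w|=|e^{\imath\theta} w|$; combined with the reverse triangle inequality this gives $\bigl||z|-|w|\bigr| = \bigl||z|-|e^{\imath\theta} w|\bigr| \leq |z-e^{\imath\theta}w|$. Applied with $z=u(x)$, $w=u(y)$ and $\theta=(x-y)\cdot A_{\e}(\frac{x+y}{2})$, this yields the desired inequality at the pointwise level.

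Next I would square both sides, divide by $|x-y|^{3+2s}$, integrate over $\R^6$, and multiply by $c_{3,s}/2$. The right-hand side is exactly $[u]_{A_{\e}}^{2}$, which is finite by the assumption $u\in H^s_{\e}$; thus $[|u|]^{2}\leq [u]_{A_{\e}}^{2}<\infty$. Since also $\int_{\R^3}V_{\e}(x)|u|^2\,dx<\infty$ with $V_{\e}\geq V_0>0$, we have $|u|\in L^2(\R^3,\R)$ and hence $|u|\in H^s(\R^3,\R)$.

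There is no real obstacle here: the entire argument reduces to the one-line complex-analytic inequality $\bigl||z|-|w|\bigr|\leq |z-e^{\imath\theta}w|$. The only mild subtlety is making sure the pointwise bound holds almost everywhere so that the integration step is legitimate, which is immediate since $u$ is a measurable function and $A_\e$ is continuous, so the exponential factor is a measurable function of $(x,y)$.
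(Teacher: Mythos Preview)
Your argument is correct and is exactly the standard proof of the fractional diamagnetic inequality; the paper itself does not prove this lemma but cites it from \cite{AD, DS}, where the same pointwise estimate $\bigl||u(x)|-|u(y)|\bigr|\leq \bigl|u(x)-e^{\imath(x-y)\cdot A_{\e}(\frac{x+y}{2})}u(y)\bigr|$ (reverse triangle inequality plus $|e^{\imath\theta}|=1$) is used.
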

\begin{lem}\label{Sembedding}
	The space $H^{s}_{\e}$ is continuously embedded in $L^{r}(\R^{3}, \C)$ for all $r\in [2, 2^{*}_{s}]$, and compactly embedded in $L_{\rm loc}^{r}(\R^{3}, \C)$ for all $r\in [1, 2^{*}_{s})$.\\
	Moreover, if $V_\infty=\infty$, then, for any bounded sequence $(u_{n})$  in $\h$, we have that, up to a subsequence, $(|u_{n}|)$ is strongly convergent in $L^{r}(\R^{3}, \R)$ for all $r\in [2, 2^{*}_{s})$.
\end{lem}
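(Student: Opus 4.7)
The overall plan is to reduce each claim to classical (non-magnetic) fractional Sobolev theory by means of the diamagnetic inequality of Lemma \ref{DI}, supplemented, in the local part, by a comparison between the ordinary and magnetic Gagliardo seminorms that exploits the continuity of $A$.

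For the continuous embedding into $L^r(\R^3,\C)$ with $r\in[2,2^{*}_{s}]$, I would use the identity $\|u\|_{L^r(\R^3,\C)}=\||u|\|_{L^r(\R^3,\R)}$. Lemma \ref{DI} yields $[|u|]^2\leq[u]^2_{A_\e}$, while $V_0>0$ from \eqref{RV} gives $\||u|\|_{L^2}^2\leq V_0^{-1}\|u\|_\e^2$. Hence $\||u|\|_{H^s(\R^3,\R)}\leq C\|u\|_\e$, and the classical fractional Sobolev embedding closes this step.

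For the compact embedding into $L^r_{\rm loc}(\R^3,\C)$, I would fix a bounded open set $K\subset\R^3$ and show that the ordinary Gagliardo seminorm on $K$ is controlled by the magnetic one. Writing $\theta(x,y):=(x-y)\cdot A_\e\bigl(\tfrac{x+y}{2}\bigr)$, the pointwise estimate
$$|u(x)-u(y)|\leq|u(x)-e^{\imath\theta(x,y)}u(y)|+|1-e^{\imath\theta(x,y)}|\,|u(y)|,$$
combined with $|1-e^{\imath\theta}|\leq|\theta|$ and the bound $|\theta(x,y)|\leq M_K|x-y|$ for $x,y\in K$ (valid since $A_\e$ is bounded on compacts), leads after integration to
$$[u]^2_{H^s(K,\C)}\leq 2[u]^2_{A_\e}+C_K\|u\|^2_{L^2(K)},$$
the remainder term being finite since the singular integral $\int_{K}|x-y|^{-1-2s}\,dx$ is bounded thanks to $1+2s<3$. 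Thus any bounded sequence in $\h$ is bounded in $H^s(K,\C)$, and classical Rellich--Kondrachov supplies a subsequence convergent in $L^r(K,\C)$ for every $r\in[1,2^{*}_{s})$.

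Finally, assume $V_\infty=\infty$ and let $(u_n)$ be bounded in $\h$. Since $V_\e(x)=V(\e x)\to\infty$ as $|x|\to\infty$, for every $\eta>0$ there exists $R>0$ with $V_\e\geq 1/\eta$ on $\{|x|\geq R\}$, which gives $\int_{|x|\geq R}|u_n|^2\,dx\leq\eta\|u_n\|_\e^2\leq C\eta$. Combining this uniform tightness with the local compactness of the preceding step (applied through $\bigl||u_n|-|u|\bigr|\leq|u_n-u|$), a diagonal extraction produces $|u_n|\to|u|$ in $L^2(\R^3,\R)$; interpolation against the uniform bound in $L^{2^{*}_{s}}$ inherited from step one then upgrades the convergence to every $L^r(\R^3,\R)$ with $r\in[2,2^{*}_{s})$. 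The only non-routine point is the local seminorm comparison, which crucially uses both the continuity of $A$ (so that $A_\e$ is bounded on compacts) and the fact $1+2s<3$ needed to control the singular kernel.
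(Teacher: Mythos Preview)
The paper does not supply its own proof of this lemma; it is recalled from \cite{AD, DS}. Your argument is correct and, in fact, reproduces the standard route taken in those references: the continuous embedding via the diamagnetic inequality and classical fractional Sobolev, the local compact embedding via the pointwise splitting
\[
|u(x)-u(y)|\leq\bigl|u(x)-e^{\imath\theta(x,y)}u(y)\bigr|+\bigl|1-e^{\imath\theta(x,y)}\bigr|\,|u(y)|
\]
together with the local boundedness of $A_\e$, and the coercive case via tightness plus interpolation. One small precision: for the Rellich--Kondrachov step you should take $K$ to be a ball (or any extension domain) rather than an arbitrary bounded open set, so that the compact embedding $H^s(K,\C)\hookrightarrow L^r(K,\C)$ is available; this is harmless for the $L^r_{\rm loc}$ conclusion.
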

\begin{lem}\label{aux}
If $u\in H^{s}(\R^{3}, \R)$ and $u$ has compact support, then $w=e^{\imath A(0)\cdot x} u \in \h$.
\end{lem}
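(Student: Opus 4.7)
The plan is to verify the three requirements defining $H^s_\e$ for the function $w=e^{\imath A(0)\cdot x}u$: namely that $w\in L^{2_s^*}(\R^3,\C)$, that $\int_{\R^3}V_\e(x)|w|^2\,dx<\infty$, and above all that the magnetic Gagliardo seminorm $[w]^2_{A_\e}$ is finite. Since $|w|=|u|$, the first requirement is immediate from the Sobolev embedding $H^s(\R^3,\R)\hookrightarrow L^{2_s^*}$, and the third is immediate from the fact that $u$ has compact support and $V$ is continuous (so $V_\e$ is bounded on the support of $u$). The real content is the seminorm bound.

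To bound $[w]^2_{A_\e}$, I would factor out the phase $e^{\imath A(0)\cdot x}$ in the numerator to rewrite
\[
|w(x)-e^{\imath(x-y)\cdot A_\e(\frac{x+y}{2})}w(y)|
=|u(x)-e^{\imath\gamma_\e(x,y)}u(y)|,
\]
where $\gamma_\e(x,y)=(x-y)\cdot\bigl[A_\e(\tfrac{x+y}{2})-A(0)\bigr]$. An application of the triangle inequality then gives the pointwise estimate
\[
|u(x)-e^{\imath\gamma_\e(x,y)}u(y)|^2
\le 2|u(x)-u(y)|^2+2|u(y)|^2|1-e^{\imath\gamma_\e(x,y)}|^2,
\]
and therefore $[w]^2_{A_\e}\le 2[u]^2+2\,I$, where
\[
I=\frac{c_{3,s}}{2}\iint_{\R^6}\frac{|u(y)|^2\,|1-e^{\imath\gamma_\e(x,y)}|^2}{|x-y|^{3+2s}}\,dxdy.
\]
The first term $[u]^2$ is finite by the hypothesis $u\in H^s(\R^3,\R)$; everything reduces to controlling $I$.

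To handle $I$, I would split according to whether $|x-y|\le 1$ or $|x-y|>1$. On the region $|x-y|\le 1$, I would use the elementary bound $|1-e^{\imath\gamma}|^2\le\gamma^2$ to obtain
\[
|1-e^{\imath\gamma_\e(x,y)}|^2\le|x-y|^2\,\bigl|A_\e(\tfrac{x+y}{2})-A(0)\bigr|^2.
\]
Since $u$ is supported in some ball $B_R$, the integration is effectively over $y\in B_R$ and $x\in B_{R+1}$, on which the continuity of $A$ gives a uniform bound $|A_\e(\tfrac{x+y}{2})-A(0)|\le C$; the surviving $x$-integral $\int_{|x-y|\le 1}|x-y|^{2-(3+2s)}\,dx$ converges because $s<1$. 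On the complementary region $|x-y|>1$ I would use the trivial bound $|1-e^{\imath\gamma_\e}|^2\le 4$, so that this part of $I$ is controlled by $4\|u\|_{L^2}^2\int_{|z|>1}|z|^{-(3+2s)}\,dz<\infty$.

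Combining the two regions shows $I<\infty$, whence $[w]^2_{A_\e}<\infty$, which together with the first and third observations yields $w\in H^s_\e$. The only mildly delicate step is the split estimate for $I$; everything else is routine. Note that the argument does not require any regularity on $A$ beyond continuity, which is important since $A$ is only assumed continuous in this setting.
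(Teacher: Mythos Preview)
Your argument is correct. The paper does not give its own proof of this lemma (it is simply quoted from \cite{AD, DS}), but the decomposition you carry out --- factoring out $e^{\imath A(0)\cdot x}$ to reduce to the phase $\gamma_\e(x,y)=(x-y)\cdot[A_\e(\tfrac{x+y}{2})-A(0)]$, bounding by $2[u]^2+2I$, and then splitting $I$ into the near region $|x-y|\le 1$ (where $|1-e^{\imath\gamma}|^2\le\gamma^2$ and continuity of $A$ on a compact set suffices) and the far region $|x-y|>1$ (where $|1-e^{\imath\gamma}|^2\le 4$) --- is the standard one, and essentially the same computation reappears later in the paper in the proof of Theorem~\ref{Exthm} when estimating $X_\e$ in \eqref{Ye}--\eqref{Ye22}. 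Your closing remark that only continuity of $A$ is needed here (no H\"older regularity) is also correct and worth making.

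One harmless slip of the pen: when you write ``the third is immediate from the fact that $u$ has compact support and $V$ is continuous,'' you mean the \emph{second} requirement (the potential integral $\int V_\e|w|^2\,dx$), not the seminorm; the logic is otherwise in order.
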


\noindent
Now, let $u\in \h$, and we define
\begin{equation}\label{eqtruncation}
\hat{u}_{j}(x):=\varphi_{j}(x)u(x)
\end{equation}
where $j\in\mathbb{N}^*$ and $\varphi_{j}(x)=\varphi(2x/j)$ with $\varphi\in C^{\infty}_{0}(\R^{3}, \R)$, $0\leq \varphi\leq 1$, $\varphi(x)=1$ if $|x|\leq 1$, and $\varphi(x)=0$ if $|x|\geq 2$. Note that $\hat{u}_{j}\in \h$ and $\hat{u}_{j}$ has compact support. Moreover, from \cite[Lemma $3.2$]{ZSZ} it follows that
\begin{lem}\label{truncation}
	For any $\e>0$, it holds $\|\hat{u}_{j}-u\|_{\e}\rightarrow 0$ as $j\rightarrow \infty$.
\end{lem}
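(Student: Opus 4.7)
The plan is to write $w_j := \hat u_j - u = \psi_j u$ where $\psi_j := \varphi_j - 1$, and to exploit three features of the cutoff family: $|\psi_j|\le 1$ everywhere, $\psi_j(x)\to 0$ pointwise as $j\to\infty$ (since $\varphi_j(x)=1$ once $j/2 > |x|$), and the uniform Lipschitz estimate $|\psi_j(x)-\psi_j(y)| = |\varphi_j(x)-\varphi_j(y)| \le \min\bigl(2,\,C j^{-1}|x-y|\bigr) \le \min(2,C|x-y|)$. The norm $\|w_j\|_\e^2$ splits as a potential piece $\int V_\e|\psi_j u|^2\,dx$ and a magnetic Gagliardo piece $[\psi_j u]^2_{A_\e}$, which I treat separately.

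For the potential piece, $V_\e|\psi_j u|^2 \le V_\e|u|^2 \in L^1(\R^3)$ and $\psi_j u \to 0$ pointwise, so dominated convergence gives the conclusion. For the Gagliardo piece, I use the algebraic identity
\begin{equation*}
\psi_j(x)u(x) - e^{\imath(x-y)\cdot A_\e(\frac{x+y}{2})}\psi_j(y)u(y)
= \psi_j(x)\bigl[u(x) - e^{\imath(x-y)\cdot A_\e(\frac{x+y}{2})}u(y)\bigr]
+ e^{\imath(x-y)\cdot A_\e(\frac{x+y}{2})}u(y)\bigl[\psi_j(x)-\psi_j(y)\bigr].
\end{equation*}
Since $|e^{\imath(\cdot)}|=1$, squaring and using $(a+b)^2\le 2a^2+2b^2$ gives
\begin{equation*}
[\psi_j u]^2_{A_\e}
\le C\iint_{\R^{6}} \frac{\psi_j(x)^2\,\bigl|u(x) - e^{\imath(x-y)\cdot A_\e(\frac{x+y}{2})}u(y)\bigr|^{2}}{|x-y|^{3+2s}}\,dxdy
+ C\iint_{\R^{6}} \frac{|u(y)|^{2}|\psi_j(x)-\psi_j(y)|^{2}}{|x-y|^{3+2s}}\,dxdy.
\end{equation*}

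The first integral converges to $0$ by dominated convergence: the integrand is bounded by the integrable density $|u(x)-e^{\imath(x-y)\cdot A_\e(\frac{x+y}{2})}u(y)|^2/|x-y|^{3+2s}$ (which is integrable because $u\in H^s_\e$), and the factor $\psi_j(x)^2$ tends to $0$ pointwise. The key step is the second integral. Using the Lipschitz-type bound and the inequality
\begin{equation*}
\int_{\R^3} \frac{\min(4,C|x-y|^{2})}{|x-y|^{3+2s}}\,dx \le K
\end{equation*}
valid for $s\in(0,1)$ (the near singularity gives $\int_0^1 r^{1-2s}dr<\infty$ since $s<1$, the tail gives $\int_1^\infty r^{-1-2s}dr<\infty$ since $s>0$), Fubini yields the uniform dominating function $K|u(y)|^2 \in L^1_y$, so by Fubini the integrand is in $L^1(\R^6)$; since for every fixed $(x,y)$ the difference $\psi_j(x)-\psi_j(y)$ vanishes for $j$ large enough, dominated convergence again gives the limit $0$.

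The main obstacle is the last step: for the ``commutator'' integral one must simultaneously control the singularity at $x=y$ (handled by the Lipschitz bound that upgrades the integrand to $|x-y|^{1-2s}$, integrable near the diagonal) and the integrability of the dominating function as a function of $(x,y)\in\R^6$ (handled by integrating in $x$ first and using $u\in L^2$). Once this is done, adding the two estimates shows $[\psi_j u]^2_{A_\e}\to 0$, and combined with the potential part one concludes $\|\hat u_j-u\|_\e\to 0$ as claimed. (A clean alternative would be to establish first the uniform multiplier bound $\|\varphi_j v\|_\e \le C\|v\|_\e$ with the same computation and then invoke Lemma~\ref{density} to reduce to $u\in C_c^\infty(\R^3,\C)$, where $\hat u_j=u$ for $j$ large trivially; but the direct dominated-convergence argument above seems the shortest.)
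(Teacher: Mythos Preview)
Your proof is correct. The paper does not supply its own argument for this lemma: it simply invokes \cite[Lemma~3.2]{ZSZ} and states the result. Your proof is the standard direct one---splitting off the potential term by dominated convergence, then decomposing the magnetic Gagliardo difference into a ``main'' piece (dominated by the $[u]_{A_\e}^2$ integrand, killed by $\psi_j(x)^2\to 0$) and a ``commutator'' piece (handled by the uniform Lipschitz bound on $\varphi_j$ together with $u\in L^2$). This is essentially the argument one expects to find in the cited reference, so there is no genuine methodological difference to compare; your write-up simply makes the proof self-contained rather than deferring it.

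One minor remark: in your parenthetical on the integrability of $\int_{\R^3}\min(4,C|x-y|^2)|x-y|^{-3-2s}\,dx$, the split should be at $|x-y|\sim 2/\sqrt{C}$ rather than at $1$, but of course this does not affect the conclusion.
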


\noindent
Moreover, we recall the following useful result proved in \cite[Lemma 3.1]{ZSZ}:
\begin{lem}\label{vanishing}
	Let $\tau\in [2, 2^{*}_{s})$ and $(u_{n})\subset \h$ be a bounded sequence. Then there exists a subsequence $(u_{n_{j}})\subset \h$ such that for any $\sigma>0$ there exists $r_{\sigma,\tau}>0$ such that 
	\begin{equation}\label{DL}
	\limsup_{j\rightarrow \infty} \int_{B_{j}(0)\setminus B_{r}(0)} |u_{n_{j}}|^{\tau} dx\leq \sigma
	\end{equation}
	for any $r\geq r_{\sigma}$.
\end{lem}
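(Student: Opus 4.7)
I would reduce the problem to a tightness statement for the scalar sequence $(|u_n|)$ in $L^{\tau}(\R^3,\R)$ and then run a two-step diagonal extraction. By Lemma \ref{DI} the sequence $(|u_n|)$ is bounded in $H^s(\R^3,\R)$, hence in $L^\tau(\R^3,\R)$ by Lemma \ref{Sembedding}; set $C:=\sup_n\int_{\R^3}|u_n|^\tau\,dx<\infty$. Introduce the annular functionals $F_k(u):=\int_{A_k}|u|^\tau\,dx$ with $A_k:=B_{k+1}(0)\setminus B_k(0)$ for integers $k\geq 0$, so that $\sum_{k\geq 0}F_k(u_n)=\|u_n\|_{L^\tau}^\tau\leq C$ for every $n$.

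Since each $(F_k(u_n))_n$ lies in $[0,C]$, a Cantor diagonal extraction produces a subsequence, still denoted $(u_n)$, with $F_k(u_n)\to\alpha_k$ for every $k$ and $\|u_n\|_{L^\tau}^\tau\to L\in[0,C]$; Fatou applied to the counting measure gives $\sum_k\alpha_k\leq L$. If the inequality were an equality, the bookkeeping identity below would immediately yield the claim. The main obstruction is a possibly positive residual mass $\delta:=L-\sum_k\alpha_k$ escaping to infinity, which is invisible to the annular weak-type limits $\alpha_k$.

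To absorb $\delta$, observe that for each fixed $m$, $\sum_{k>m}F_k(u_n)=\|u_n\|_{L^\tau}^\tau-\sum_{k\leq m}F_k(u_n)\to L-\sum_{k\leq m}\alpha_k$ as $n\to\infty$, and this limit decreases to $\delta$ as $m\to\infty$. A second diagonal extraction then selects strictly increasing indices $J_m$ with $\sum_{k>m}F_k(u_{J_m})\geq\delta-1/m$; the relabelled subsequence $v_m:=u_{J_m}$ is still a subsequence of the original, retains $F_k(v_m)\to\alpha_k$ for every $k$ and $\|v_m\|_{L^\tau}^\tau\to L$, and by construction satisfies $\liminf_m\sum_{k\geq m}F_k(v_m)\geq\delta$.

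For integer $r$ and $m>r$ the identity
\[
\int_{B_m(0)\setminus B_r(0)}|v_m|^\tau\,dx=\|v_m\|_{L^\tau}^\tau-\sum_{k=0}^{r-1}F_k(v_m)-\sum_{k=m}^{\infty}F_k(v_m)
\]
combined with the three limits above gives $\limsup_m\int_{B_m\setminus B_r}|v_m|^\tau\,dx\leq L-\sum_{k<r}\alpha_k-\delta=\sum_{k\geq r}\alpha_k$, which is smaller than $\sigma$ as soon as $r\geq r_\sigma$ is chosen so that $\sum_{k\geq r_\sigma}\alpha_k<\sigma$; the case of arbitrary real $r\geq r_\sigma$ follows by monotonicity of the domain $B_m\setminus B_r$ in $r$. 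The principal obstacle is precisely the second diagonal extraction that couples the subsequence index $m$ with the sum tail $\sum_{k\geq m}F_k(v_m)$, recapturing the escaping mass $\delta$ in the region $B_m^c$; without this step the first Cantor argument leaves an uncontrollable gap of size $\delta$ in the estimate.
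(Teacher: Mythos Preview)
Your argument is correct. The paper does not give its own proof of this lemma; it simply quotes \cite[Lemma~3.1]{ZSZ}. The standard proof there (and in the earlier literature on which it is modeled) is different from yours and considerably shorter: one uses the compact local embedding from Lemma~\ref{Sembedding}. After passing to a subsequence with $|u_{n}|\rightharpoonup w$ in $H^{s}(\R^{3},\R)$, a single diagonal extraction arranges $\||u_{n_{j}}|-w\|_{L^{\tau}(B_{j})}<1/j$, and then
\[
\int_{B_{j}\setminus B_{r}}|u_{n_{j}}|^{\tau}\,dx\leq 2^{\tau-1}\Bigl(\||u_{n_{j}}|-w\|_{L^{\tau}(B_{j})}^{\tau}+\int_{B_{r}^{c}}|w|^{\tau}\,dx\Bigr)\leq 2^{\tau-1}\Bigl(j^{-\tau}+\int_{B_{r}^{c}}|w|^{\tau}\,dx\Bigr),
\]
so $\limsup_{j}\int_{B_{j}\setminus B_{r}}|u_{n_{j}}|^{\tau}\,dx\leq 2^{\tau-1}\int_{B_{r}^{c}}|w|^{\tau}\,dx$, which is below $\sigma$ for $r$ large since $w\in L^{\tau}(\R^{3})$.

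Your route avoids any use of local compactness or of the weak limit: you work entirely in $L^{\tau}$, tracking the annular masses $\alpha_{k}$ and explicitly recapturing the escaping residue $\delta=L-\sum_{k}\alpha_{k}$ through the second diagonal. This is a genuine concentration--compactness style argument and would succeed in settings where no Rellich-type embedding is available, but it is more elaborate than needed here. The standard proof is shorter precisely because the single weak limit $w\in L^{\tau}$ already absorbs all finite-radius mass, and tightness of one fixed $L^{\tau}$ function is automatic.
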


\noindent
In view of $(f_1)$-$(f_2)$ and arguing as in \cite[Lemma 2.7]{AD}, we can prove the following properties for the nonlinearity:
\begin{lem}
\label{propf}
Assume that $(f_1)$-$(f_4)$ hold. Then $f$ satisfies the following properties:
\begin{enumerate}[label=(\roman*),ref=\roman*]
	\item \label{propf1} for every $\xi >0$ there exists $C_{\xi}>0$ such that for all $t\in\R$,
	$$\frac{\theta}{2}F(t^{2}) \leq f(t^2)t^2\leq \xi t^{4} + C_{\xi} |t|^{q};$$
	\item \label{propf2} there exist $C_1,C_2>0$ such that for all $t\in\R$, $F(t^{2})\geq C_{1}|t|^{\theta} - C_{2}$;
	\item \label{propf3} if $u_{n_j}\rightharpoonup u$ in $\h$ and $\hat{u}_j$ is defined as in \eqref{eqtruncation} we have that
	$$
	\int_{\R^{3}} F(|u_{n_j}|^{2})-F(|u_{n_j}-\hat{u}_{j}|^{2})-F(|\hat{u}_{j}|^{2}) dx=o_{j}(1)
	\quad
	\hbox{as }j\to\infty,
	$$
	where $o_{j}(1)\rightarrow 0$ as $j\rightarrow \infty$;
	\item \label{propf4} if $(u_n)\subset\h$ is bounded, $(u_{n_j})$ a subsequence as in Lemma \ref{vanishing} such that $u_{n_j}\rightharpoonup u$ in $\h$ and $\hat{u}_j$ is defined as in \eqref{eqtruncation} we have that
	\[
\Re\left(\int_{\R^{3}} [f(|u_{n_{j}}|^{2})u_{n_{j}}-f(|u_{n_j}-\hat{u}_{j}|^{2})(u_{n_j}-\hat{u}_{j})-f(|\hat{u}_{j}|^{2})\hat{u}_{j}] \bar{\varphi} dx\right)\rightarrow 0
\quad
\hbox{as }j\to\infty
\]
uniformly with respect to $\varphi\in \h$ with $\|\varphi\|_{\e}\leq 1$.
\end{enumerate}
\end{lem}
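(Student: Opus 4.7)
My plan is to treat parts (i) and (ii) as routine consequences of the growth and Ambrosetti--Rabinowitz type hypotheses on $f$, and parts (iii) and (iv) as Brezis--Lieb type splittings that combine the strong convergence $\hat{u}_{j}\to u$ in $\h$ (Lemma \ref{truncation}) with the tightness estimate of Lemma \ref{vanishing}. For (i), the lower bound is precisely $(f_3)$ applied at $t^{2}$. For the upper bound, I would use $(f_1)$ to get $f(r)\leq \xi r$ for small $r\geq 0$, $(f_2)$ to get $f(r)\leq C r^{(q-2)/2}$ for large $r$, and absorb the bounded intermediate regime into the coefficient of $r^{(q-2)/2}$; evaluating at $r=t^{2}$ and multiplying by $t^{2}$ yields the claim. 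For (ii), integrating $(f_3)$ written as $(\log F(r))'\geq \theta/(2r)$ on $[r_{0},r]$ for some $r_{0}>0$ gives $F(r)\geq c\,r^{\theta/2}$ for large $r$, and the bounded contribution on $[0,r_{0}]$ is absorbed in the additive constant $C_{2}$.

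For (iii), I would write, via the fundamental theorem of calculus,
\[
F(|u_{n_{j}}|^{2})-F(|u_{n_{j}}-\hat{u}_{j}|^{2})=2\int_{0}^{1} f(|u_{n_{j}}-(1-\tau)\hat{u}_{j}|^{2})\,\Re\bigl[(u_{n_{j}}-(1-\tau)\hat{u}_{j})\overline{\hat{u}_{j}}\bigr]\,d\tau,
\]
control the integrand using (i) by a combination of $|\hat{u}_{j}|(|u_{n_{j}}|+|\hat{u}_{j}|)$ and $|\hat{u}_{j}|(|u_{n_{j}}|+|\hat{u}_{j}|)^{q-1}$, and then decompose $\R^{3}=B_{r}\cup(B_{j}\setminus B_{r})\cup B_{j}^{c}$. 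On $B_{j}^{c}$ the term $\hat{u}_{j}$ vanishes identically, so the full integrand equals zero; on $B_{r}$, the local compact embedding from Lemma \ref{Sembedding} together with $\hat{u}_{j}\to u$ in $\h$ identifies the limit with $\int F(|u|^{2})\,dx$ through dominated convergence, canceling $\int F(|\hat{u}_{j}|^{2})\,dx$; on the annulus $B_{j}\setminus B_{r}$ the tightness bound \eqref{DL} shows that the contribution is $O(\sigma)$ for $r\geq r_{\sigma,\tau}$, and letting first $j\to\infty$ and then $\sigma\to 0$ concludes.

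For (iv) the strategy is identical but for the vector integrand $f(|a|^{2})a$ tested against $\bar{\varphi}$. Starting from
\[
f(|u_{n_{j}}|^{2})u_{n_{j}}-f(|u_{n_{j}}-\hat{u}_{j}|^{2})(u_{n_{j}}-\hat{u}_{j})=\int_{0}^{1} \tfrac{d}{d\tau}\bigl[f(|u_{n_{j}}-(1-\tau)\hat{u}_{j}|^{2})(u_{n_{j}}-(1-\tau)\hat{u}_{j})\bigr]\,d\tau,
\]
I would bound the integrand pointwise using (i) and $(f_4)$ by a sum of terms of the form $|\hat{u}_{j}|(|u_{n_{j}}|+|\hat{u}_{j}|)$ and $|\hat{u}_{j}|(|u_{n_{j}}|+|\hat{u}_{j}|)^{q-2}$, apply H\"older with the conjugate pairs $(2,2)$ and $(q,q/(q-1))$, and absorb $\|\varphi\|_{L^{r}}\leq C\|\varphi\|_{\e}\leq C$ via the continuous embedding in Lemma \ref{Sembedding}. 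The three-region decomposition then mirrors (iii), and the $L^{1}$ cancellation on $B_{r}$ produces a uniform-in-$\varphi$ remainder that vanishes as $j\to\infty$.

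The main obstacle is (iv): uniformity in the test function $\varphi$ forces every estimate to be of the form $\|\varphi\|_{L^{r}}$ with $r\in[2,2^{*}_{s}]$, and simultaneously the interplay between the index $j$ of the truncation $\hat{u}_{j}$ (built from the weak limit $u$) and the tail radius $r$ appearing in Lemma \ref{vanishing} has to be managed so that the order of the limits $r\to\infty$ and $j\to\infty$ preserves the cancellation between the three pieces. The fact that $\hat{u}_{j}$ is a truncation of $u$ rather than of $u_{n_{j}}$ is precisely what makes the Brezis--Lieb mechanism close, and a careful diagonal extraction of the subsequence $(u_{n_{j}})$ provided by Lemma \ref{vanishing} is the delicate technical point.
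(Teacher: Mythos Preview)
Your treatment of (i) and (ii) is correct, and the three--region decomposition $\R^{3}=B_{r}\cup(B_{j}\setminus B_{r})\cup B_{j}^{c}$ for (iii)--(iv) is exactly the mechanism behind the proof the paper invokes (it simply cites \cite[Lemma~2.7]{AD}). Two points need fixing.

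For (iii) you appeal to the tightness bound \eqref{DL} on the annulus, but Lemma~\ref{vanishing} is \emph{not} among the hypotheses of (iii). This is harmless: because your FTC representation produces a factor $|\hat{u}_{j}|\leq |u|$ in every term, H\"older against the bounded sequence $(|u_{n_{j}}|+|\hat{u}_{j}|)$ together with $\|u\|_{L^{p}(\R^{3}\setminus B_{r})}\to 0$ already makes the annular contribution small, uniformly in $j$. (Incidentally, your first pointwise bound should carry the exponent $3$, not $1$.)

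For (iv) there is a genuine gap. Differentiating $\tau\mapsto f(|w_{\tau}|^{2})w_{\tau}$ produces a term $2f'(|w_{\tau}|^{2})\Re(w_{\tau}\overline{\hat{u}_{j}})\,w_{\tau}$, and you propose to bound it ``using (i) and $(f_4)$''. But $(f_4)$ only yields the \emph{lower} bound $f'(t)\geq f(t)/t$; nothing in $(f_1)$--$(f_4)$ (the sole hypotheses of the lemma) gives an upper bound on $f'$, so no pointwise majorant of the required type is available and the FTC route stalls. The repair is to drop the FTC representation for (iv): on $B_{r}$, use strong convergence $u_{n_{j}}\to u$, $\hat{u}_{j}\to u$, $u_{n_{j}}-\hat{u}_{j}\to 0$ in $L^{p}(B_{r})$ for $p\in[2,2^{*}_{s})$ and continuity of the Nemytskii map $w\mapsto f(|w|^{2})w$ (which follows from the growth bound in (i), no derivative needed) to send the bracket to $0$ in $L^{4/3}(B_{r})+L^{q'}(B_{r})$, hence uniformly against $\varphi$; on $B_{j}\setminus B_{r}$, abandon the search for a factor $|\hat{u}_{j}|$ and simply apply the triangle inequality to the bracket, bound each piece by $\xi|w|^{3}+C_{\xi}|w|^{q-1}$, and invoke Lemma~\ref{vanishing} (now legitimately available) with $\tau=4$ and $\tau=q$. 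With this modification your outline goes through.
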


\noindent
Now, let $s, t\in (0, 1)$ be such that $4s+2t\geq 3$. Recalling the embedding $H^{s}(\R^{3}, \R)\subset L^{q}(\R^{3}, \R)$ for all $q\in [2, \2)$ (see \cite[Theorem 6.5]{DPV}), we get
\begin{equation}\label{ruiz}
H^{s}(\R^{3}, \R)\subset L^{\frac{12}{3+2t}}(\R^{3}, \R).
\end{equation}
Fix $u\in \h$. By Lemma \ref{DI} we know that $|u|\in H^{s}(\R^{3}, \R)$. Now, let us define the functional $\mathcal{L}_{|u|}: D^{t, 2}(\R^{3}, \R)\rightarrow \R$ given by
$$
\mathcal{L}_{|u|}(v)=\int_{\R^{3}} |u|^{2} v\, dx,
$$
where $D^{t, 2}(\R^{3}, \R)=\{u\in L^{2^{*}_{t}}(\R^{3}, \R): [u]<\infty\}$.
Using H\"older inequality and \eqref{ruiz} we can see that
\begin{equation}
|\mathcal{L}_{|u|}(v)|\leq \left(\int_{\R^{3}} |u|^{\frac{12}{3+2t}}dx\right)^{\frac{3+2t}{6}}  \left(\int_{\R^{3}} |v|^{2^{*}_{t}}dx\right)^{\frac{1}{2^{*}_{t}}}\leq C\|u\|^{2}_{D^{s,2}}\|v\|_{D^{t,2}},
\end{equation}
where
$$
\|v\|^{2}_{D^{t,2}}=\iint_{\R^{6}} \frac{|v(x)-v(y)|^{2}}{|x-y|^{3+2t}}dxdy.
$$
Then, $\mathcal{L}_{|u|}$ is a linear continuous functional, and applying the Lax-Milgram Theorem we can find a unique $\phi_{|u|}^{t}\in D^{t, 2}(\R^{3}, \R)$ such that 
\begin{equation}\label{FPE}
(-\Delta)^{t} \phi_{|u|}^{t}= |u|^{2} \mbox{ in } \R^{3},
\end{equation}
which can be expressed via the following $t$-Riesz formula
\begin{equation}\label{RF}
\phi_{|u|}^{t}(x)=c_{t}\int_{\R^{3}} \frac{|u(y)|^{2}}{|x-y|^{3-2t}} \, dy  \quad (x\in \R^{3}), \quad c_{t}=\pi^{-\frac{3}{2}}2^{-2t}\frac{\Gamma(3-2t)}{\Gamma(t)}.
\end{equation}
In the sequel, we will omit the constant $c_{t}$ for convenience in \eqref{RF}. Now we prove the following properties of the function $\phi_{|u|}^{t}$.
\begin{lem}\label{poisson}
Let us assume that $4s+2t\geq 3$ and $u\in \h$.
Then we have:
\begin{enumerate}
\item 
$\phi_{|u|}^{t}: H^{s}(\R^{3},\R)\rightarrow D^{t,2}(\R^{3}, \R)$ is continuous and maps bounded sets into bounded sets,
\item if $u_{n}\rightharpoonup u$ in $\h$ then $\phi_{|u_{n}|}^{t}\rightharpoonup \phi_{|u|}^{t}$ in $D^{t,2}(\R^{3},\R)$,
\item $\phi^{t}_{|ru|}=r^{2}\phi^{t}_{|u|}$ for all $r\in \R$ and $\phi^{t}_{|u(\cdot+y)|}(x)=\phi^{t}_{|u|}(x+y)$,
\item $\phi_{|u|}^{t}\geq 0$ for all $u\in \h$, and we have
$$
\|\phi_{|u|}^{t}\|_{D^{t,2}}\leq C\|u\|_{L^{\frac{12}{3+2t}}(\R^{3})}^{2}\leq C\|u\|^{2}_{\e} \, \mbox{ and } \int_{\R^{3}} \phi_{|u|}^{t}|u|^{2}dx\leq C\|u\|^{4}_{L^{\frac{12}{3+2t}}(\R^{3})}\leq C\|u\|_{\e}^{4}.
$$
\end{enumerate} 
\end{lem}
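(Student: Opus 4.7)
The plan is to read off every item from the variational characterization $\langle \phi^{t}_{|u|},v\rangle_{D^{t,2}}=\int_{\R^3}|u|^{2}v\,dx$ provided by Lax--Milgram, combined with the explicit Riesz representation \eqref{RF}.

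For item (4), positivity of $\phi^{t}_{|u|}$ is immediate from \eqref{RF} because the kernel $|x-y|^{2t-3}$ is positive and $|u|^{2}\geq 0$. Testing the defining identity against $v=\phi^{t}_{|u|}$ itself gives
$$
\|\phi^{t}_{|u|}\|^{2}_{D^{t,2}}=\int_{\R^3}|u|^{2}\phi^{t}_{|u|}\,dx,
$$
and Hölder with conjugate exponents $6/(3+2t)$ and $2^{*}_{t}=6/(3-2t)$, together with the embedding $D^{t,2}\hookrightarrow L^{2^{*}_{t}}$, yields the bound $\|\phi^{t}_{|u|}\|_{D^{t,2}}\leq C\|u\|^{2}_{L^{12/(3+2t)}}$. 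The chain $\|u\|_{L^{12/(3+2t)}}\leq C\||u|\|_{H^s}\leq C\|u\|_{\e}$ comes from \eqref{ruiz} together with Lemma \ref{DI}. The estimate on $\int\phi^{t}_{|u|}|u|^{2}\,dx$ then follows by a further Hölder inequality and the $D^{t,2}$--bound just obtained.

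Item (1) is now short: boundedness of images of bounded sets is contained in (4), and for continuity I would subtract two Lax--Milgram identities, test against $\phi^{t}_{|u|}-\phi^{t}_{|v|}$, and use the factorization $|u|^{2}-|v|^{2}=(|u|-|v|)(|u|+|v|)$ with Hölder plus \eqref{ruiz}. Item (3) reduces to a direct change of variable inside \eqref{RF}: the scaling factor $r^{2}$ pulls out of $|ru|^{2}$, and a translation in $y$ absorbs the shift.

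The only delicate statement is (2). Given $u_n\rightharpoonup u$ in $\h$, Lemma \ref{DI} and \eqref{ruiz} make $(|u_n|^{2})$ bounded in $L^{6/(3+2t)}(\R^{3},\R)$. Extracting a subsequence so that $u_n\to u$ almost everywhere (via Lemma \ref{Sembedding} and a diagonal argument on an exhaustion of $\R^3$ by balls), one gets $|u_n|^{2}\to|u|^{2}$ a.e., and a.e.\ convergence combined with $L^{6/(3+2t)}$ boundedness forces weak convergence $|u_n|^{2}\rightharpoonup|u|^{2}$ in $L^{6/(3+2t)}$. Since $L^{6/(3+2t)}$ is dual to $L^{2^{*}_{t}}$ and $D^{t,2}\hookrightarrow L^{2^{*}_{t}}$, testing the Lax--Milgram identity against an arbitrary $v\in D^{t,2}$ gives $\langle\phi^{t}_{|u_n|},v\rangle_{D^{t,2}}\to\langle\phi^{t}_{|u|},v\rangle_{D^{t,2}}$, which is exactly $\phi^{t}_{|u_n|}\rightharpoonup\phi^{t}_{|u|}$ in $D^{t,2}$; a subsequence--of--subsequences argument upgrades this to the full sequence. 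The main obstacle I expect is precisely this passage: Lemma \ref{Sembedding} only supplies local compactness, so a.e.\ convergence has to be pulled out by diagonal extraction, and one has to be careful with the duality exponents so as not to lose compactness in the tails when testing against $v\in D^{t,2}$.
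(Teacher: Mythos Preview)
Your proof is correct and agrees with the paper on items (1), (3), and (4); the only notable variation is that for the bound on $\int_{\R^{3}}\phi^{t}_{|u|}|u|^{2}\,dx$ the paper invokes the Hardy--Littlewood--Sobolev inequality directly on the Riesz representation, whereas you recycle the $D^{t,2}$--bound through H\"older and the Sobolev embedding $D^{t,2}\hookrightarrow L^{2^{*}_{t}}$, which gives the same conclusion.

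For item (2) your route differs from the paper's. The paper avoids any a.e.\ extraction: it tests only against $v\in C^{\infty}_{c}(\R^{3},\R)$, uses the compact embedding of Lemma \ref{Sembedding} to get $|u_{n}|\to|u|$ strongly in $L^{2}(\supp v)$ along the full sequence, estimates $\int(|u_{n}|^{2}-|u|^{2})v\,dx$ by H\"older on the compact support, and then concludes by density of $C^{\infty}_{c}$ in $D^{t,2}$. Your approach---diagonal extraction of an a.e.\ convergent subsequence, then weak $L^{6/(3+2t)}$ convergence of $|u_{n}|^{2}$ via boundedness plus pointwise convergence, then a subsequence--of--subsequences argument---is valid but more circuitous. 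The paper's version is shorter precisely because compact local embeddings already upgrade weak $\h$-convergence to strong local $L^{q}$-convergence of the full sequence, so no a.e.\ step or subsequence bookkeeping is needed; your version, on the other hand, has the merit of testing directly against arbitrary $v\in D^{t,2}$ without an explicit density step.
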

\begin{proof}
$(1)$ Since $\phi_{|u|}^{t}\in D^{t,2}(\R^{3}, \R)$ satisfies \eqref{FPE}, we have
\begin{equation}\label{spput}
\int_{\R^{3}}(-\Delta)^{\frac{t}{2}}\phi_{|u|}^{t} (-\Delta)^{\frac{t}{2}}v \,dx=\int_{\R^{3}} |u|^{2}v \,dx
\end{equation}
for all $v\in D^{t,2}(\R^{3}, \R)$. Then $\|\mathcal{L}_{|u|}\|_{\mathcal{L}(D^{t,2}, \R)}=\|\phi_{|u|}^{t}\|_{D^{t,2}}$ for all $u\in \h$ (here $\mathcal{L}(D^{t,2}, \R)$ is the space of bounded linear operators from
$D^{t,2}$ into $\R$), and our claim is to prove that  $u\mapsto \mathcal{L}_{|u|}$ is continuous to deduce that $\phi_{|u|}^{t}$ is continuous.
Let $u_{n}\rightarrow u$ in $\h$. In view of Lemma \ref{DI} and Theorem \ref{Sembedding} we have $|u_{n}|\rightarrow |u|$ in $L^{\frac{12}{3+2t}}(\R^{3})$. Thus, for all $v\in D^{t,2}(\R^{3}, \R)$, we can see that
\begin{align*}
|\mathcal{L}_{|u_{n}|}(v)-\mathcal{L}_{|u|}(v)|&=\left|\int_{\R^{3}} (|u_{n}|^{2}-|u|^{2})v\, dx\right| \\
&\leq \left(\int_{\R^{3}} ||u_{n}|^{2}-|u|^{2}|^{\frac{6}{3+2t}} \, dx\right)^{\frac{3+2t}{6}} \|v\|_{L^{\frac{6}{3-2t}}(\R^{3})} \\
&\leq C\left[\left(\int_{\R^{3}} ||u_{n}|-|u||^{\frac{12}{3+2t}} \, dx\right)^{\frac{1}{2}} \left(\int_{\R^{3}} ||u_{n}|+|u||^{\frac{12}{3+2t}} \, dx\right)^{\frac{1}{2}}\right]^{\frac{3+2t}{6}} \|v\|_{D^{t,2}} \\
&\leq C  \||u_{n}|-|u|\|_{L^{\frac{12}{3+2t}}(\R^{3})}\|v\|_{D^{t,2}}\rightarrow 0 \quad \mbox{ as } n\rightarrow \infty.
\end{align*}
$(2)$ Let $u_{n}\rightharpoonup u$ in $\h$ and fix $v\in C^{\infty}_{c}(\R^{3}, \R)$.
In the light of Lemma \ref{DI} and Theorem \ref{Sembedding} we get $|u_{n}|\rightarrow |u|$ in $L^{q}_{loc}(\R^{3}, \R)$ for all $q\in [1, \2)$. Therefore
\begin{align*}
\langle \phi_{|u_{n}|}^{t}-\phi_{|u|}^{t}, v\rangle&=\int_{\R^{3}} (|u_{n}|^{2}-|u|^{2})v\, dx \\
&\leq \left(\int_{supp(v)} ||u_{n}|-|u||^{2}\, dx\right)^{\frac{1}{2}} \left(\int_{\R^{3}} ||u_{n}|+|u||^{2}\, dx\right)^{\frac{1}{2}}  \|v\|_{L^{\infty}(\R^{3})}\\
&\leq C \||u_{n}|-|u|\|_{L^{2}(supp(v))} \|v\|_{L^{\infty}(\R^{3})}\rightarrow 0,
\end{align*}
and the conclusion follows by a density argument.\\
$(3)$ is obtained by the definition of $\phi_{|u|}^{t}$.\\
$(4)$ It is clear that $\phi_{|u|}^{t}\geq 0$. 
Using \eqref{spput} with $v=\phi_{|u|}^{t}$, H\"older inequality and \eqref{ruiz} we have
\begin{align*}
\|\phi_{|u|}^{t}\|^{2}_{D^{t,2}}&\leq \|u\|^{2}_{L^{\frac{12}{3+2t}}(\R^{3})} \|\phi_{|u|}^{t}\|_{L^{2^{*}_{t}}(\R^{3})}
\leq C  \|u\|^{2}_{L^{\frac{12}{3+2t}}(\R^{3})} \|\phi_{|u|}^{t}\|_{D^{t,2}}\leq C \|u\|^{2}_{\e}  \|\phi_{|u|}^{t}\|_{D^{t,2}}.
\end{align*}
On the other hand, in view of  \eqref{RF}, Hardy-Littlewood-Sobolev inequality \cite[Theorem 4.3]{LL} and \eqref{ruiz} we get
\begin{align*}
\int_{\R^{3}} \phi_{|u|}^{t}|u|^{2}dx&\leq C \||u|^{2}\|^{2}_{L^{\frac{6}{3+2t}}(\R^{3})}=C \|u\|^{4}_{L^{\frac{12}{3+2t}}(\R^{3})}\leq C\|u\|_{\e}^{4}. 
\end{align*}
\end{proof}

\noindent
In the lemma below we prove a Brezis-Lieb splitting property \cite{BL} (see also \cite{Ack, MVS, ZZ}) for the following operator 
$$
\Psi: \h\rightarrow \R \quad \quad \Psi(u):=\int_{\R^{3}} \phi^{t}_{|u|} |u|^{2}dx
$$
and its differential $\Psi'$.
These results will be useful to study the decomposition of the functional associated with \eqref{Pe} along $(PS)$ sequences; see Proposition \ref{prop2.1} in Section $4$.
\begin{lem}\label{CCS}
Let us assume that $2s+2t> 3$. 
Then we have the following splittings:
\begin{compactenum}[$(i)$]
\item if $u_{n_j}\rightharpoonup u$ in $\h$ and $\hat{u}_j$ is defined as in \eqref{eqtruncation} we have that
	$$
	\Psi(u_{n_j})-\Psi(u_{n_j}-\hat{u}_{j})-\Psi(\hat{u}_{j}) =o_{j}(1)
	\quad
	\hbox{as }j\to\infty;
	$$
	\item if $(u_n)\subset\h$ is bounded, $(u_{n_j})$ a subsequence as in Lemma \ref{vanishing} such that $u_{n_j}\rightharpoonup u$ in $\h$ and $\hat{u}_j$ is defined as in \eqref{eqtruncation} we have that
	$$
	\langle \Psi'(u_{n_j})-\Psi'(u_{n_j}-\hat{u}_{j})-\Psi'(\hat{u}_{j}), \varphi\rangle=o_{j}(1)
	\quad
\hbox{as }j\to\infty
$$
uniformly with respect to $\varphi\in \h$ with $\|\varphi\|_{\e}\leq 1$.
\end{compactenum} 
\end{lem}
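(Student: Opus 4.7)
Proof plan. I express $\Psi$ as $\Psi(u)=T(|u|^{2},|u|^{2})$ via the symmetric bilinear form $T(f,g):=\iint_{\R^{6}} f(x)g(y)|x-y|^{-(3-2t)}\,dxdy$. By Hardy--Littlewood--Sobolev and $\h\hookrightarrow L^{12/(3+2t)}(\R^{3},\C)$ (available under $2s+2t>3$), $|T(f,g)|\le C\|f\|_{L^{6/(3+2t)}}\|g\|_{L^{6/(3+2t)}}$. Decompose $|u_{n_{j}}|^{2}=a_{j}+b_{j}+c_{j}$ with $a_{j}:=|u_{n_{j}}-\hat u_{j}|^{2}$, $b_{j}:=|\hat u_{j}|^{2}$, $c_{j}:=2\Re((u_{n_{j}}-\hat u_{j})\overline{\hat u_{j}})$. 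From Lemma~\ref{truncation} ($\hat u_{j}\to u$ in $\h$) combined with Lemma~\ref{poisson}(1), $b_{j}\to|u|^{2}$ strongly in $L^{6/(3+2t)}$ and $\phi^{t}_{|\hat u_{j}|}\to\phi^{t}_{|u|}$ strongly in $L^{2^{*}_{t}}$; passing to an a.e.-convergent subsequence (Lemma~\ref{Sembedding}), $u_{n_{j}}-\hat u_{j}\to 0$ a.e.\ while bounded in $L^{12/(3+2t)}$, so $a_{j}\rightharpoonup 0$ weakly in $L^{6/(3+2t)}$.

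For \emph{part (i)}, bilinear expansion gives
\[
\Psi(u_{n_{j}})-\Psi(u_{n_{j}}-\hat u_{j})-\Psi(\hat u_{j})=2T(a_{j},b_{j})+2T(a_{j},c_{j})+2T(b_{j},c_{j})+T(c_{j},c_{j}).
\]
The pivotal step is $\|c_{j}\|_{L^{6/(3+2t)}}\to 0$, which by H\"older reduces to $\|(u_{n_{j}}-\hat u_{j})\hat u_{j}\|_{L^{6/(3+2t)}}\to 0$. I split $\R^{3}=B_{R}\cup(B_{j}\setminus B_{R})\cup(\R^{3}\setminus B_{j})$: on $B_{R}$ Rellich forces $\|u_{n_{j}}-\hat u_{j}\|_{L^{12/(3+2t)}(B_{R})}\to 0$; on $B_{j}\setminus B_{R}$ the bound $|\hat u_{j}|\le|u|\in L^{12/(3+2t)}$ together with Lemma~\ref{vanishing} makes both factors small for $R$ large; on $\R^{3}\setminus B_{j}$ the product vanishes since $\hat u_{j}\equiv 0$. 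HLS then disposes of $T(c_{j},c_{j})$, $T(a_{j},c_{j})$, $T(b_{j},c_{j})$. The remaining $T(a_{j},b_{j})=\int \phi^{t}_{|u_{n_{j}}-\hat u_{j}|}b_{j}\,dx$ vanishes by weak--strong duality in $L^{2^{*}_{t}}\leftrightarrow L^{6/(3+2t)}$: the Riesz potential is a bounded linear operator from $L^{6/(3+2t)}$ into $L^{2^{*}_{t}}$, so $\phi^{t}_{|u_{n_{j}}-\hat u_{j}|}\rightharpoonup 0$ in $L^{2^{*}_{t}}$ while $b_{j}\to|u|^{2}$ strongly in the predual.

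For \emph{part (ii)}, using $\Psi'(u)[\varphi]=4\Re\int\phi^{t}_{|u|}u\bar\varphi\,dx$ and the linearity of the Riesz potential in its density, direct manipulation yields
\[
\langle\Psi'(u_{n_{j}})-\Psi'(u_{n_{j}}-\hat u_{j})-\Psi'(\hat u_{j}),\varphi\rangle=4\Re\int\bar\varphi\bigl[\phi^{t}_{|u_{n_{j}}-\hat u_{j}|}\hat u_{j}+\phi^{t}_{|\hat u_{j}|}(u_{n_{j}}-\hat u_{j})+\psi_{j}u_{n_{j}}\bigr]dx,
\]
where $\psi_{j}$ is the Riesz potential of $c_{j}$. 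Since $\h\hookrightarrow L^{12/(3+2t)}$, it suffices by H\"older to prove that each bracketed factor tends to $0$ in $L^{12/(9-2t)}(\R^{3})$; this delivers the required uniformity in $\|\varphi\|_{\e}\le 1$. The $\psi_{j}$-term is immediate from $\|\psi_{j}\|_{L^{2^{*}_{t}}}\le C\|c_{j}\|_{L^{6/(3+2t)}}\to 0$ together with the $L^{12/(3+2t)}$-boundedness of $u_{n_{j}}$. For $\phi^{t}_{|\hat u_{j}|}(u_{n_{j}}-\hat u_{j})$, I replace $\phi^{t}_{|\hat u_{j}|}$ by $\phi^{t}_{|u|}$ modulo an $o_{L^{2^{*}_{t}}}(1)$ error and split $\R^{3}=B_{R}\cup(\R^{3}\setminus B_{R})$: the tail is controlled by $\phi^{t}_{|u|}\in L^{2^{*}_{t}}$, and on $B_{R}$ Rellich gives $u_{n_{j}}-\hat u_{j}\to 0$ strongly in $L^{12/(3+2t)}(B_{R})$.

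The remaining factor $\phi^{t}_{|u_{n_{j}}-\hat u_{j}|}\hat u_{j}$ is the most delicate. After writing $\hat u_{j}=u+(\hat u_{j}-u)$ to absorb the strong-convergence part, and then peeling off the $L^{12/(3+2t)}$-tail of $u$, the task reduces to $\|\phi^{t}_{|u_{n_{j}}-\hat u_{j}|}\|_{L^{2^{*}_{t}}(B_{R})}\to 0$ for each fixed $R$. I prove this by further decomposing the defining convolution into a near-field on $\{|y|\le R'\}$ (which vanishes in $L^{2^{*}_{t}}(\R^{3})$ by HLS and the local $L^{6/(3+2t)}$-convergence of $a_{j}$) and a far-field on $\{|y|>R'\}$, uniformly bounded on $B_{R}$ by $C\|a_{j}\|_{L^{6/(3+2t)}}(R')^{-(3-2t)/2}$ via H\"older against $|y|^{-(3-2t)}\in L^{6/(3-2t)}(\R^{3}\setminus B_{R'})$. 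The \textbf{main obstacle} is precisely this uniformity with respect to $\varphi$ in part (ii): global HLS gives only boundedness, and the required smallness must be extracted through Rellich-type local strong convergence paired with careful bulk/tail splittings both of the Riesz potential and of the ambient integration.
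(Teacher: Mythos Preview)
Your argument is correct and takes a genuinely different route from the paper's proof.

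\textbf{What the paper does.} The paper's key device is a uniform $L^{\infty}$ bound on the Riesz potentials: using $2s+2t>3$ to pick exponents $3/(2s)<p<3/(3-2t)$ and $q>3/(3-2t)$ with $2p',2q'\in(2,2^{*}_{s})$, one splits the convolution defining $\phi^{t}_{|w|}(x)$ into a near-field $|x-y|<1$ and a far-field $|x-y|\ge 1$, and H\"older gives $\|\phi^{t}_{|w|}\|_{L^{\infty}}\le C\max\{\|w\|_{L^{2p'}}^{2},\|w\|_{L^{2q'}}^{2}\}$. With that in hand, the paper decomposes the $x$-integral into $B_{r}$, $B_{j}\setminus B_{r}$, and $\R^{3}\setminus B_{j}$: the inner ball is handled by Rellich, the annulus by the $L^{\infty}$ bound on the potentials paired with Cauchy--Schwarz and Lemma~\ref{vanishing} in $L^{2}$.

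\textbf{What you do.} You never leave the HLS/weak--strong framework: the bilinear expansion $|u_{n_{j}}|^{2}=a_{j}+b_{j}+c_{j}$ reduces (i) to the cross terms $T(a_{j},b_{j})$, $T(\cdot,c_{j})$; the latter vanish because $c_{j}\to 0$ strongly in $L^{6/(3+2t)}$ (your three-region splitting, which incidentally does not need Lemma~\ref{vanishing}---the tail of $|\hat u_{j}|\le |u|$ alone suffices), and $T(a_{j},b_{j})$ vanishes by weak--strong duality between $L^{2^{*}_{t}}$ and its predual $L^{6/(3+2t)}$. For (ii) you push the three bracket factors to $0$ in $L^{12/(9-2t)}$, with the most delicate one handled by a local $L^{2^{*}_{t}}$ estimate on $\phi^{t}_{|u_{n_{j}}-\hat u_{j}|}$ via a near/far splitting of the \emph{convolution variable} rather than of the ambient integration.

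\textbf{Comparison.} The paper's $L^{\infty}$ bound is a clean reusable estimate and is where the hypothesis $2s+2t>3$ enters essentially; the subsequent region splitting is short but leans on Lemma~\ref{vanishing}. Your approach stays entirely in HLS-compatible Lebesgue spaces, trading the $L^{\infty}$ control for weak--strong pairings and local Rellich; it is longer but more modular, does not really need Lemma~\ref{vanishing} in part~(ii), and in fact only uses $2s+2t>3$ to guarantee the embedding $\h\hookrightarrow L^{12/(3+2t)}$ (for which the weaker $4s+2t>3$ already suffices). Two minor remarks: your invocation of Lemma~\ref{vanishing} in part~(i) is superfluous, and the a.e.\ convergence you extract is along a further subsequence, so a standard sub-subsequence argument is needed to conclude for the full sequence.
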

\begin{proof}
The verification of $(i)$ is similar to and simpler than that of $(ii)$, so we only check the latter.
Combining $u_{n_{j}}\rightharpoonup u$ in $\h$, Lemma \ref{Sembedding}, Lemma \ref{truncation} and Lemma \ref{poisson}, we can see
that for any $r>0$
$$
\lim_{j\rightarrow \infty} \left|\int_{B_{r}(0)} (\phi_{|u_{n_{j}}|}^{t}u_{n_{j}} -\phi_{|u_{n_{j}}-\hat{u}_{j}|}^{t}(u_{n_{j}}-\hat{u}_{j})-\phi_{|\hat{u}_{j}|}^{t}\hat{u}_{j})\bar{\varphi} \, dx\right|=0
$$
uniformly with respect to $\varphi\in \h$ with $\|\varphi\|_{\e}\leq 1$.\\
On the other hand, from Lemma \ref{DL}, it follows that for any $\sigma>0$ there exists $r_{\sigma}>0$ such that
\begin{equation*}
\limsup_{j\rightarrow \infty} \int_{B_{j}(0)\setminus B_{r}(0)} |u_{n_{j}}|^{2} dx\leq \sigma
\end{equation*}
for any $r\geq r_{\sigma}$.
Then
\begin{equation*}
\limsup_{j\rightarrow \infty} \int_{B_{j}(0)\setminus B_{r}(0)} |\tilde{u}_{j}|^{2} dx\leq \int_{\R^{3}\setminus B_{r}(0)}  |u|^{2} dx \leq \sigma
\end{equation*}
for any $r\geq r_{\sigma}$.	
Now, since $2s+2t>3$, we can find $\frac{3}{2s}<p<\frac{3}{3-2t}$ so that $2p'\in (2, \2)$. Moreover, taking $q>\frac{3}{3-2t}$ and using again $2s+2t>3$, we obtain that  $2q'\in (2, \2)$. 
Then, applying H\"older inequality, we have for all $u\in \h$ 
\begin{align*}
\phi_{|u|}^{t}(x)=\int_{\R^{3}} \frac{|u(y)|^{2}}{|x-y|^{3-2t}} dy&\leq \|u\|^{2}_{L^{2p'}(B_{1}(x))}\left(\int_{|y-x|<1} \frac{|u(y)|^{2}}{|x-y|^{p(3-2t)}} dy\right)^{\frac{1}{p}} \\
&+\|u\|^{2}_{L^{2q'}(B^{c}_{1}(x))}\left(\int_{|y-x|>1} \frac{|u(y)|^{2}}{|x-y|^{q(3-2t)}} dy\right)^{\frac{1}{q}}\\
&\leq C\max\left\{\|u\|^{2}_{L^{2p'}(\R^{3})}, \|u\|^{2}_{L^{2q'}(\R^{3})}\right\}
\end{align*}
for some $C>0$ independent of $x$.
Fix $\varphi\in \h$ such that $\|\varphi\|_{\e}\leq 1$. Taking into account the boundedness of $(u_{n_{j}})$ and $(\hat{u}_{j})$ in $\h$ and using Lemma \ref{truncation}, we can see that the above estimate yields
\begin{align*}
&\limsup_{j\rightarrow \infty} \left|\int_{\R^{3}} (\phi_{|u_{n_{j}}|}^{t}u_{n_{j}} -\phi_{|u_{n_{j}}-\hat{u}_{j}|}^{t}(u_{n_{j}}-\hat{u}_{j})-\phi_{|\hat{u}_{j}|}^{t}\hat{u}_{j})\bar{\varphi} \, dx\right|\\
&\leq\limsup_{j\rightarrow \infty} \int_{B_{j}(0)\setminus B_{r}(0)} \left|\phi_{|u_{n_{j}}|}^{t}u_{n_{j}} -\phi_{|u_{n_{j}}-\hat{u}_{j}|}^{t}(u_{n_{j}}-\hat{u}_{j})-\phi_{|\hat{u}_{j}|}^{t}\hat{u}_{j} \right| |\bar{\varphi}| \, dx \\
&\leq C \limsup_{j\rightarrow \infty} \Bigl[\left(\|u_{n_{j}}\|_{L^{2}(B_{j}(0)\setminus B_{r}(0))}+\|\hat{u}_{j}\|_{L^{2}(B_{j}(0)\setminus B_{r}(0))}  \right) \|\varphi\|_{L^{2}(\R^{3})} \\
&\times \max\left\{\|u_{n_{j}}\|^{2}_{L^{2p'}(\R^{3})}, \|u_{n_{j}}\|^{2}_{L^{2q'}(\R^{3})}, \|\hat{u}_{j}\|^{2}_{L^{2p'}(\R^{3})}, \|\hat{u}_{j}\|^{2}_{L^{2q'}(\R^{3})}\right\}\Bigr]\\
&\leq C\sigma^{1/2}.
\end{align*}
From the arbitrariness of $\sigma>0$ we get the thesis.
\end{proof}
\begin{remark}
In order to lighten the notation, in what follows we neglect the constant $c_{3,s}$ appearing in the definition of $[\cdot]_{A}$.
\end{remark}

\section{Functional setting}
In order to find weak solutions to \eqref{Pe}, we look for  critical points of the  functional $J_{\e}: \h\rightarrow \R$ associated with \eqref{Pe}  defined by 
$$
J_{\e}(u)=\frac{1}{2} \|u\|^{2}_{\e}+\frac{1}{4} \int_{\R^{3}} \phi_{|u|}^{t}|u|^{2}\, dx-\frac{1}{2} \int_{\R^{3}} F(|u|^{2})\, dx.
$$
In view of Lemma \ref{Sembedding} and Lemma \ref{propf}, it is easy to check that $J_{\e}$ is well-defined, $J_{\e}\in C^{1}(\h, \R)$ and its differential is given by
\begin{align*}
	\langle J'_{\e}(u), v\rangle=& \Re \Bigl( \iint_{\R^{6}} \frac{(u(x)-e^{\imath (x-y)\cdot A_{\e}(\frac{x+y}{2})} u(y)) \overline{(v(x)-e^{\imath (x-y)\cdot A_{\e}(\frac{x+y}{2})} v(y))}}{|x-y|^{3+2s}} \, dxdy \\
	&\qquad +\int_{\R^{3}} V_{\e} (x) u \bar{v}\, dx-\int_{\R^{3}} f(|u|^{2})u\bar{v}\, dx\Bigr).
	\end{align*}
Hence, the critical points of $J_{\e}$ are exactly the weak solutions of \eqref{Pe}.	
Now we show that, for any $\e>0$, the functional $J_{\e}$ possesses a mountain pass geometry \cite{AR}.
\begin{lem}\label{MPG}
	The functional $J_{\e}$ satisfies the following conditions:
	\begin{compactenum}[$(i)$]
		\item  there exist $\alpha, \rho >0$ such that $J_{\e}(u)\geq \alpha$ with $\|u\|_{\e}=\rho$; 
		\item  there exists $e\in \h$ with $\|e\|_{\e}>\rho$ such that $J_{\e}(e)<0$. 
	\end{compactenum}
\end{lem}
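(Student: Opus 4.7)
The plan is to verify the two mountain-pass conditions by standard estimates that combine Lemma \ref{propf}(i)--(ii), the Sobolev embeddings of Lemma \ref{Sembedding}, and the nonnegativity and scaling of the Poisson term from Lemma \ref{poisson}. Both parts are essentially routine, so I will focus on sketching the dominant terms; the only mild subtlety is producing a legitimate test function for part (ii).

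For (i), I would start from Lemma \ref{propf}(i) with a small $\xi>0$: since $\tfrac{\theta}{2}F(t^2)\leq f(t^2)t^2\leq \xi t^4+C_\xi|t|^q$, integrating gives
$$
\int_{\R^3}F(|u|^2)\,dx\leq \tfrac{2}{\theta}\bigl(\xi\,\|u\|_{L^4}^{4}+C_\xi\,\|u\|_{L^q}^{q}\bigr).
$$
By Lemma \ref{Sembedding} and $q\in(4,2^*_s)$, both $L^4$ and $L^q$ norms are controlled by $\|u\|_\e$. Since Lemma \ref{poisson}(4) gives $\int\phi^t_{|u|}|u|^2\,dx\geq 0$, discarding this term yields
$$
J_\e(u)\geq \tfrac{1}{2}\|u\|_\e^2-C_1\xi\,\|u\|_\e^4-C_2 C_\xi\,\|u\|_\e^q = \|u\|_\e^2\Bigl(\tfrac{1}{2}-C_1\xi\,\|u\|_\e^2-C_2C_\xi\,\|u\|_\e^{q-2}\Bigr).
$$
Fixing $\xi$ small and then choosing $\rho>0$ sufficiently small keeps the bracket above $\tfrac{1}{4}$ whenever $\|u\|_\e=\rho$, and (i) follows with $\alpha=\rho^2/4$.

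For (ii), I would use Lemma \ref{propf}(ii) to produce $C_1,C_2>0$ with $F(t^2)\geq C_1|t|^\theta-C_2$ for some $\theta>4$. Fix any nonzero $u_0\in C^\infty_c(\R^3,\R)$ with compact support $K$, and set $w:=e^{\imath A(0)\cdot x}u_0$, so that $w\in \h$ by Lemma \ref{aux} and $|w|=|u_0|$. Using the scaling $\phi^t_{|\tau w|}=\tau^2\phi^t_{|w|}$ from Lemma \ref{poisson}(3) together with the bound $\int\phi^t_{|w|}|w|^2\,dx\leq C\|w\|_\e^4$, for $\tau>0$ I can estimate
$$
J_\e(\tau w)\leq \tfrac{\tau^2}{2}\|w\|_\e^2+\tfrac{C\tau^4}{4}\|w\|_\e^4-\tfrac{C_1\tau^\theta}{2}\int_{K}|u_0|^\theta\,dx+\tfrac{C_2}{2}|K|.
$$
Since $\theta>4$ and $\int_K|u_0|^\theta\,dx>0$, the negative term dominates as $\tau\to\infty$, hence $J_\e(\tau w)\to-\infty$. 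Choosing $\tau$ large enough to ensure both $J_\e(\tau w)<0$ and $\|\tau w\|_\e>\rho$ furnishes the required $e:=\tau w$.

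The only place one has to be even mildly careful is the construction of $w$: without Lemma \ref{aux} it would not be obvious that a compactly supported smooth function lies in $\h$ when $A$ is nonzero, but the twisting by $e^{\imath A(0)\cdot x}$ settles this and preserves $|w|=|u_0|$, which is exactly what the lower bound on $F$ needs. Everything else is a direct combination of previously established lemmas.
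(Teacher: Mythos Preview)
Your proof is correct and follows essentially the same approach as the paper: both parts use Lemma~\ref{propf}(i)--(ii), the Sobolev embedding, and the positivity/scaling of the Poisson term in exactly the way you describe. The only cosmetic difference is that for (ii) the paper simply takes any nonzero $u\in C^\infty_c(\R^3,\C)$ (which lies in $\h$ by Lemma~\ref{density}), whereas you construct the test function via Lemma~\ref{aux}; both choices work equally well.
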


\begin{proof}
$(i)$ Using Lemma \ref{propf}-$(i)$, Lemma \ref{poisson}-$(4)$ and Lemma \ref{Sembedding}, for $\xi$ sufficiently small we have
\begin{equation*}
J_{\e}(u) 
\geq \frac{1}{2} \|u\|_{\e}^{2}-\frac{\xi}{4}\|u\|_{L^{4}(\R^{3})}^{4}-C_{\xi}\|u\|^{q}_{L^{q}(\R^{3})}
\geq \frac{1}{2} \|u\|_{\e}^{2} - \xi C\|u\|^{4}_{\e}-C_2 \|u\|_{\e}^{q}.
\end{equation*}
$(ii)$ In view of Lemma \ref{propf}-$(ii)$ and recalling that $\theta>4$, we can see that for any $u \in C^{\infty}_{c}(\R^{3}, \C)$ such that $u \not \equiv 0$, we obtain 
\begin{align*}
J_{\e}(T u)&=\frac{T^{2}}{2} \|u\|_{\e}^{2}+\frac{T^{4}}{4} \int_{\R^{3}} \phi_{|u|}^{t} |u|^{2}dx-\frac{1}{2}\int_{\R^{3}} F(T^{2}|u|^{2})dx\\
&\leq \frac{T^{4}}{2}  \left(\|u\|_{\e}^{2}+\int_{\R^{3}} \phi_{|u|}^{t} |u|^{2}dx\right)-CT^{\vartheta} \|u\|^{\theta}_{L^{\theta}(\R^{3})}+C <0
\end{align*}
for $T>0$ large enough.
\end{proof}

In view of Lemma \ref{MPG}, we can use the Ekeland Variational Principle to see that there exists a $(PS)_{c_{\e}}$ sequence $(u_{n})\subset \h$, that is 
\begin{equation}\label{PSc}
J_{\e}(u_{n})\rightarrow c_{\e} \quad \mbox{ and }\quad J'_{\e}(u_{n})\rightarrow 0, 
\end{equation}
where $c_{\e}$ is the minimax level of the mountain pass theorem, that is
$$
c_{\e}:= \inf_{\gamma\in\Gamma_{\e}} \max_{t\in [0,1]} J_{\e}(\gamma(t))
$$
where 
$$
\Gamma_{\e}:=\{\gamma\in C([0,1],H_{\e}^s): \gamma(0)=0,J_{\e}(\gamma(1))<0\}.
$$
Moreover, we can see that the following assertion holds:
\begin{lem}\label{bPS}
If $(u_{n})$ is a $(PS)_{c_{\e}}$ sequence then $(u_{n})$ bounded in $\h$.
\end{lem}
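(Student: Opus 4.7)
The plan is the standard Ambrosetti--Rabinowitz boundedness argument, exploiting that $\theta>4$ so that both the Poisson term and the nonlinearity contribute with favorable signs.

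First I would write, for $n$ large so that $|J_\e(u_n)-c_\e|\le 1$ and $\|J'_\e(u_n)\|_{(H^s_\e)'}\le 1$,
\begin{align*}
c_\e+1+\|u_n\|_\e &\ge J_\e(u_n)-\frac{1}{\theta}\langle J'_\e(u_n),u_n\rangle\\
&=\Bigl(\frac12-\frac{1}{\theta}\Bigr)\|u_n\|_\e^2 +\Bigl(\frac14-\frac{1}{\theta}\Bigr)\int_{\R^3}\phi^t_{|u_n|}|u_n|^2\,dx\\
&\qquad +\frac{1}{\theta}\int_{\R^3}\Bigl[f(|u_n|^2)|u_n|^2-\frac{\theta}{2}F(|u_n|^2)\Bigr]\,dx.
\end{align*}
Here I used that $\langle J'_\e(u_n),u_n\rangle=\|u_n\|_\e^2+\int\phi^t_{|u_n|}|u_n|^2\,dx-\int f(|u_n|^2)|u_n|^2\,dx$, which is immediate from the formula for $J'_\e$ given before the lemma (just take $v=u_n$ and note that the $(u_n,u_n)$ inner products are real). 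The identity $\langle J'_\e(u_n),u_n\rangle$ involving the Poisson term being exactly $\int\phi^t_{|u_n|}|u_n|^2dx$ uses the computation in Lemma \ref{poisson}.

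Next I would observe that the three terms on the right-hand side are each nonnegative: the coefficient $\frac12-\frac{1}{\theta}>0$ since $\theta>4>2$; the coefficient $\frac14-\frac{1}{\theta}>0$ since $\theta>4$, and $\phi^t_{|u_n|}\ge 0$ by Lemma \ref{poisson}(4); and the integrand $f(|u_n|^2)|u_n|^2-\frac{\theta}{2}F(|u_n|^2)\ge 0$ pointwise thanks to hypothesis $(f_3)$. Dropping the last two terms yields
\[
\Bigl(\frac12-\frac{1}{\theta}\Bigr)\|u_n\|_\e^2\le c_\e+1+\|u_n\|_\e,
\]
which is a quadratic inequality in $\|u_n\|_\e$ and therefore forces $(\|u_n\|_\e)$ to be bounded.

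There is no real obstacle here: the key point is that the Ambrosetti--Rabinowitz constant $\theta$ is chosen strictly greater than $4$, which is precisely what makes the Poisson nonlocal term (which is quartic in $u$) harmless. If we only had $\theta>2$ as in the pure Schr\"odinger case, the coefficient $\frac14-\frac{1}{\theta}$ could be negative and a separate estimate on $\int\phi^t_{|u_n|}|u_n|^2dx$ via Lemma \ref{poisson}(4) would be needed; with $\theta>4$ the estimate is immediate.
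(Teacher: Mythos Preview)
Your argument is correct and is essentially the same Ambrosetti--Rabinowitz boundedness argument the paper uses. The only cosmetic difference is the choice of multiplier: the paper takes the combination $J_{\e}(u_{n})-\tfrac{1}{4}\langle J'_{\e}(u_{n}),u_{n}\rangle$, which makes the Poisson coefficient vanish exactly and leaves $\tfrac{1}{4}\|u_{n}\|_{\e}^{2}+\tfrac{1}{4}\int\bigl[f(|u_{n}|^{2})|u_{n}|^{2}-2F(|u_{n}|^{2})\bigr]dx$, whereas you use $\tfrac{1}{\theta}$ and keep a strictly positive Poisson coefficient; either way $\theta>4$ is the decisive hypothesis.
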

\begin{proof}
In view of \eqref{PSc} we can see that
\begin{align*}
c_{\e}+o(1)&=\frac{1}{2} \|u_{n}\|_{\e}^{2}+\frac{1}{4} \int_{\R^{3}} \phi^{t}_{|u_{n}|} |u_{n}|^{2}dx-\frac{1}{2}\int_{\R^{3}} F(|u_{n}|^{2}) dx
\end{align*}
and
\begin{equation*}
o(1)=\|u_{n}\|_{\e}^{2}+\int_{\R^{3}} \phi^{t}_{|u_{n}|} |u_{n}|^{2}dx-\int_{\R^{3}} f(|u_{n}|^{2})|u_{n}|^{2} dx.
\end{equation*}
Then, using $(f_3)$ we can deduce that
\begin{align*}
c_{\e}+o(1)&=\frac{1}{4}\|u_{n}\|^{2}_{\e}+\frac{1}{4}\int_{\R^{3}} f(|u_{n}|^{2})|u_{n}|^{2}-2F(|u_{n}|^{2}) dx\\
&\geq \frac{1}{4}\|u_{n}\|^{2}_{\e}+\left(\frac{\vartheta-4}{8}\right)\int_{\R^{3}} F(|u_{n}|^{2}) dx \\
&\geq \frac{1}{4}\|u_{n}\|^{2}_{\e} 
\end{align*}
which implies that $(u_{n})$ is bounded in $\h$.
\end{proof}

As in \cite[Chapter 4]{W}, it is easy to see that $c_{\e}$ can be characterized as follows:
\begin{equation*}
c_{\e}= \inf_{u\in \h\setminus \{0\}} \sup_{t\geq 0} J_{\e}(tu) = \inf_{u\in \N_{\e}} J_{\e}(u), 
\end{equation*}
where
$$
\N_{\e}:=\{u\in \h\setminus\{0\}: \langle J'_{\e}(u),u\rangle=0\}
$$
is the Nehari manifold associated to $J_{\e}$.
Moreover, we have the following properties.
\begin{lem}\label{LemNeharyE}
We have:
\begin{compactenum}[$(i)$]
	\item there exists $K>0$ such that,  for all $u\in \N_{\e}$, $\|u\|_{\e}\geq K$;
	\item  for any $u\in \h\setminus \{0\}$ there exists a unique $t_{0}= t_{0}(u)$ such that $J_{\e}(t_{0}u)= \max_{t\geq 0} J_{\e}(tu)$ and then $t_{0}u\in \N_{\e}$. 
\end{compactenum}
\end{lem}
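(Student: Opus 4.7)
For (i), I would start from the Nehari identity $\langle J'_\e(u), u\rangle=0$, namely
$$
\|u\|_\e^2 + \int_{\R^3}\phi_{|u|}^t|u|^2\,dx = \int_{\R^3} f(|u|^2)|u|^2\,dx.
$$
Since $\phi_{|u|}^t\ge 0$ by Lemma \ref{poisson}-$(4)$, I would discard the nonnegative Poisson term on the left and estimate the right via Lemma \ref{propf}-$(i)$ applied with $\xi$ small, combined with the continuous embeddings $\h\hookrightarrow L^4(\R^3,\C)$ and $\h\hookrightarrow L^q(\R^3,\C)$ from Lemma \ref{Sembedding}. This gives
$$
\|u\|_\e^2 \le \xi\,C\,\|u\|_\e^4 + C_\xi\,\|u\|_\e^q.
$$
Since $u\not\equiv 0$ and $q>4>2$, choosing $\xi$ sufficiently small and dividing by $\|u\|_\e^2$ yields a uniform positive lower bound $\|u\|_\e\ge K$.

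For (ii), I would analyze the fibering map $h(\lambda):=J_\e(\lambda u)$ on $[0,\infty)$, which reads
$$
h(\lambda)=\frac{\lambda^2}{2}\|u\|_\e^2 + \frac{\lambda^4}{4}\int_{\R^3}\phi_{|u|}^t|u|^2\,dx - \frac{1}{2}\int_{\R^3} F(\lambda^2|u|^2)\,dx.
$$
Arguing exactly as in Lemma \ref{MPG}, subcritical growth together with $\phi_{|u|}^t\ge 0$ yields $h(\lambda)>0$ for small $\lambda>0$, while Lemma \ref{propf}-$(ii)$ with $\theta>4$ dominates the quartic Poisson term and forces $h(\lambda)\to-\infty$ as $\lambda\to+\infty$. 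Therefore $h$ attains its maximum at some $\lambda_0\in(0,\infty)$, and the condition $h'(\lambda_0)=0$ is precisely the identity $\langle J'_\e(\lambda_0u), \lambda_0u\rangle=0$, i.e.\ $\lambda_0u\in\N_\e$.

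Uniqueness of $\lambda_0$ is where I expect the main work to lie. For $\lambda>0$, dividing $h'(\lambda)=0$ by $\lambda^3$ recasts the critical-point equation as
$$
\frac{\|u\|_\e^2}{\lambda^2} + \int_{\R^3}\phi_{|u|}^t|u|^2\,dx \,=\, \int_{\{u\ne 0\}}\frac{f(\lambda^2|u|^2)}{\lambda^2|u|^2}\,|u|^4\,dx.
$$
The left-hand side is strictly decreasing in $\lambda\in(0,\infty)$, since the first term decreases and the second is $\lambda$-independent. By assumption $(f_4)$, $\tau\mapsto f(\tau)/\tau$ is strictly increasing on $(0,\infty)$, so on $\{u\ne 0\}$ the integrand is pointwise strictly increasing in $\lambda$, whence the right-hand side is strictly increasing in $\lambda$. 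A strictly decreasing function and a strictly increasing function can coincide at most once, giving uniqueness of $\lambda_0$; this $\lambda_0$ is then automatically the unique maximizer of $h$. Hypothesis $(f_4)$ is the decisive ingredient here: without it one could still produce a maximizer, but uniqueness would fail.
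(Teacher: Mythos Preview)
Your proof is correct and follows essentially the same approach as the paper: part (i) is identical, and for (ii) the paper also uses the mountain-pass geometry of Lemma \ref{MPG} for existence and the monotonicity from $(f_4)$ for uniqueness, the only cosmetic difference being that the paper subtracts the two critical-point equations directly to obtain a sign contradiction rather than phrasing it as ``strictly decreasing meets increasing''. One minor remark: $(f_4)$ as stated only asserts that $t\mapsto f(t)/t$ is increasing (not necessarily strictly), but your argument still goes through since the left-hand side $\|u\|_\e^2/\lambda^2$ is \emph{strictly} decreasing, which already forces at most one intersection with a nondecreasing right-hand side.
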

\begin{proof}
$(i)$ Fix $u\in \N_{\e}$. In view of Lemma \ref{propf}-$(i)$ and Lemma \ref{poisson}-$(4)$ we can obtain
\begin{align*}
0&=\|u\|_{\e}^{2}+\int_{\R^{3}} \phi_{|u|}|u|^{2}\, dx-\int_{\R^{3}} f( |u|^{2})|u|^{2}\,dx\\
&\geq \|u\|_{\e}^{2}-\e \|u\|_{L^{4}(\R^{3})}^{4}-C\|u\|_{L^{q}(\R^{3})}^{q} \\
&\geq \|u\|_{\e}^{2}-\e C\|u\|_{\e}^{4}-C\|u\|_{\e}^{q}
\end{align*}
which implies that there exists $K>0$ such that $\|u\|_{\e}\geq K$.

$(ii)$ Take $u\in \h\setminus \{0\}$ and set $h(t):=J_{\e}(tu)$ for $t\geq 0$. 
From the arguments in Lemma \ref{MPG}, we can see that $h(0)=0$, $h(t)>0$ for $t>0$ small and $h(t)<0$ for $t$ large. Then there exists $t_{u}>0$ such that $h(t_{u})=\max_{t\geq 0} h(t)$ so that $h'(t_{u})=0$ and $t_{u}u\in \mathcal{N}_{\e}$.  In order to prove the uniqueness of a such $t_{u}$, let $0<t_{u}<t'_{u}$ such that $t_{u}u, t'_{u}u\in \mathcal{N}_{\e}$. Then we have
\begin{align*}
\left(\frac{1}{(t'_{u})^{2}}-\frac{1}{(t_{u})^{2}}\right) \|u\|_{\e}^{2}=\int_{\R^{3}} \left[\frac{f((t'_{u})^{2}|u|^{2})}{(t'_{u})^{2}|u|^{2}}-\frac{f((t_{u})^{2}|u|^{2})}{(t_{u})^{2}|u|^{2}} \right]|u|^{4}dx.
\end{align*}
Using $(f_4)$ we can deduce that the above equation makes no sense.\end{proof}

We will see that it is very important to compare $c_{\e}$ with the minimax level of the autonomous problem
\begin{equation}
\label{AP}
\tag{$P_{\mu}$}
(-\Delta)^{s} u + \mu u+\phi^{t}_{u}u =  f(u^{2})u  \mbox{ in } \R^{3},
\end{equation}
with $\mu>0$, whose solutions can be obtained as critical points of the functional $I_{\mu}: H^{s}_{\mu}\rightarrow \R$ given by
$$
I_{\mu}(u)
=\frac{1}{2} \|u\|^{2}_{\mu}+\frac{1}{4}\int_{\R^{3}} \phi_{u}^{t}u^{2}\, dx-\frac{1}{2} \int_{\R^{3}} F(u^{2})\, dx,
$$
where $H^{s}_{\mu}$ is the space $H^{s}(\R^{3}, \R)$ endowed with the norm
$$
\|u\|^{2}_{\mu}:=[u]^2+\mu \|u\|_{L^{2}(\R^{3})}^{2}.
$$
We also define the Nehari manifold associated to \eqref{AP}
$$
\mathcal{M}_{\mu}=\{u\in H^{s}_{\mu}: \langle I'_{\mu}(u),u\rangle=0\}
$$
and 
$$
m_{\mu}
=\inf_{u\in \mathcal{M}_{\mu}} I_{\mu}(u).
$$

\begin{remark}
Arguing as in Lemma \ref{LemNeharyE} we can prove that for every fixed $\mu>0$ there exists $K>0$ such that,  for all $u\in \mathcal{M}_{\mu}$, $\|u\|_{\mu}\geq K$ and that for any $u\in H^{s}_{\mu}\setminus \{0\}$ there exists a unique $t_{0}= t_{0}(u)$ such that $I_{\mu}(t_{0}u)= \max_{t\geq 0} I_{\mu}(tu)$ and then $t_{0}u\in \mathcal{M}_{\mu}$.
\end{remark}

\noindent
In order to prove that $m_{\mu}$ can be achieved, we first recall the following useful lemma \cite[Lemma 2.2]{FQT}.
\begin{lem}
	\label{Lions}
	Let $q\in [2, 2^{*}_{s})$. If $(u_{n})$ is a bounded sequence in $H^{s}(\R^{3}, \R)$ and  
	\begin{equation*}
	\lim_{n\rightarrow \infty} \sup_{y\in \R^{3}} \int_{B_{R}(y)} |u_{n}|^{q} dx=0
	\end{equation*}
	for some $R>0$, then $u_{n}\rightarrow 0$ in $L^{r}(\R^{3}, \R)$ for all $r\in (q, 2^{*}_{s})$.
\end{lem}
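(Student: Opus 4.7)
The plan is to combine a localised fractional Gagliardo--Nirenberg inequality with a covering argument of $\R^{3}$ by balls of bounded overlap. I would first reduce the claim to strong $L^{r^{*}}$-convergence for a single well-chosen exponent $r^{*}\in (r,\2)$, and then recover the full range by global interpolation. Indeed, since $H^{s}(\R^{3})\hookrightarrow L^{2}\cap L^{\2}$, Lyapunov interpolation shows that $(u_{n})$ is bounded in every $L^{p}(\R^{3})$ with $p\in[2,\2]$; in particular $\|u_{n}\|_{L^{\tau}(\R^{3})}\leq C$. Consequently, once I establish $\|u_{n}\|_{L^{r^{*}}(\R^{3})}\to 0$ for some $r^{*}\in (r,\2)$, the interpolation
\[
\|u_{n}\|_{L^{r}(\R^{3})}\leq \|u_{n}\|_{L^{\tau}(\R^{3})}^{1-\vartheta}\,\|u_{n}\|_{L^{r^{*}}(\R^{3})}^{\vartheta},\qquad \tfrac{1}{r}=\tfrac{1-\vartheta}{\tau}+\tfrac{\vartheta}{r^{*}},
\]
immediately yields $\|u_{n}\|_{L^{r}(\R^{3})}\to 0$. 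Hence it suffices to prove the $L^{r^{*}}$-vanishing for a conveniently chosen $r^{*}$.

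To select $r^{*}$, I would invoke the fractional Gagliardo--Nirenberg inequality
\[
\|v\|_{L^{r^{*}}(\R^{3})}^{r^{*}}\leq C\,\|v\|_{L^{\tau}(\R^{3})}^{(1-\theta)r^{*}}\,[v]^{\theta r^{*}},\qquad \theta=\frac{3(1/\tau-1/r^{*})}{s+3(1/\tau-1/2)},
\]
which follows from interpolating between $L^{\tau}$ and $L^{\2}$ and using the Sobolev embedding $\|v\|_{L^{\2}}\leq C[v]$. A direct computation shows $\theta r^{*}\geq 2$ precisely when $r^{*}\geq 2+\frac{2s\tau}{3}$, and the inequality $\tau<\2$ forces $2+\frac{2s\tau}{3}<\2$; so I may pick $r^{*}\in\big[\max\{r,\,2+\tfrac{2s\tau}{3}\},\,\2\big)$. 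I then cover $\R^{3}$ by countably many balls $B_{R}(y_{i})$ of bounded overlap $\kappa$ and fix smooth cutoffs $\chi_{i}\in C_{c}^{\infty}(\R^{3})$ supported in $B_{2R}(y_{i})$, with $\chi_{i}\equiv 1$ on $B_{R}(y_{i})$ and with $\|\chi_{i}\|_{C^{1}}$ uniformly bounded in $i$ by translation invariance. The standard splitting $u\chi_{i}(x)-u\chi_{i}(z)=\chi_{i}(x)(u(x)-u(z))+u(z)(\chi_{i}(x)-\chi_{i}(z))$ together with the uniform-in-$x$ bound $\int_{\R^{3}}|\chi_{i}(x)-\chi_{i}(z)|^{2}|x-z|^{-3-2s}\,dz\leq C$ produces the commutator estimate $[u\chi_{i}]\leq C\,\|u\|_{H^{s}(B_{2R}(y_{i}))}$ with $C$ independent of $i$.

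Applying the Gagliardo--Nirenberg inequality to $u_{n}\chi_{i}$ and using these two bounds yields
\[
\int_{B_{R}(y_{i})}|u_{n}|^{r^{*}}\,dx\leq C\,\|u_{n}\|_{L^{\tau}(B_{2R}(y_{i}))}^{(1-\theta)r^{*}}\,\|u_{n}\|_{H^{s}(B_{2R}(y_{i}))}^{\theta r^{*}}.
\]
Since $\theta r^{*}\geq 2$ I split $\|u_{n}\|_{H^{s}(B_{2R}(y_{i}))}^{\theta r^{*}}\leq \|u_{n}\|_{H^{s}(\R^{3})}^{\theta r^{*}-2}\,\|u_{n}\|_{H^{s}(B_{2R}(y_{i}))}^{2}$ and invoke the bounded overlap to sum, obtaining $\sum_{i}\|u_{n}\|_{H^{s}(B_{2R}(y_{i}))}^{2}\leq C\,\|u_{n}\|_{H^{s}(\R^{3})}^{2}$. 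Pulling the vanishing $L^{\tau}$-supremum out of the remaining factor (noting that the hypothesis with radius $R$ trivially entails the same vanishing with radius $2R$, since $B_{2R}(y)$ is covered by boundedly many balls of radius $R$) then gives
\[
\int_{\R^{3}}|u_{n}|^{r^{*}}\,dx\leq C\Big(\sup_{y\in\R^{3}}\int_{B_{2R}(y)}|u_{n}|^{\tau}\,dx\Big)^{(1-\theta)r^{*}/\tau}\,\|u_{n}\|_{H^{s}(\R^{3})}^{\theta r^{*}}\to 0,
\]
which closes the argument via the initial interpolation reduction. The principal technical obstacle is precisely the exponent constraint $\theta r^{*}\geq 2$: for $r^{*}$ too close to $\tau$ the power on the $H^{s}$-factor drops below $2$ and the elementary finite-overlap summation is no longer available, which is exactly why the two-stage strategy (Gagliardo--Nirenberg for a large $r^{*}$ and a global interpolation for the smaller $r$) is necessary rather than a one-shot application of the inequality.
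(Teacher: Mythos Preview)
The paper does not prove this lemma; it is merely quoted from \cite[Lemma~2.2]{FQT}. Your argument follows the standard Lions-type strategy (localised Gagliardo--Nirenberg plus a bounded-overlap covering) and is essentially correct, but there are two issues.

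First, a notational slip: the exponent $\tau$ is never introduced. From context---in particular your final step ``pulling the vanishing $L^{\tau}$-supremum out''---it must equal the $q$ of the hypothesis, since that is the only exponent for which local vanishing is assumed. Replace $\tau$ by $q$ throughout.

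Second, and more substantively, the commutator estimate $[u\chi_{i}]\leq C\,\|u\|_{H^{s}(B_{2R}(y_{i}))}$ is not correct as stated. In your splitting, one of the two terms is
\[
\iint_{\R^{6}}|u(z)|^{2}\,\frac{|\chi_{i}(x)-\chi_{i}(z)|^{2}}{|x-z|^{3+2s}}\,dx\,dz
=\int_{\R^{3}}|u(z)|^{2}\,g_{i}(z)\,dz,
\qquad g_{i}(z):=\int_{\R^{3}}\frac{|\chi_{i}(x)-\chi_{i}(z)|^{2}}{|x-z|^{3+2s}}\,dx,
\]
and while $g_{i}\leq C$ uniformly, its support is \emph{not} contained in $B_{2R}(y_{i})$: for $|z-y_{i}|\geq 4R$ one still has $g_{i}(z)\sim R^{3}|z-y_{i}|^{-3-2s}>0$. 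Hence this term cannot be bounded by $\|u\|_{L^{2}(B_{2R}(y_{i}))}^{2}$, and no purely local $H^{s}$-norm of $u$ controls $[u\chi_{i}]$. The fix is to bypass the intermediate localised norm and work with $[u_{n}\chi_{i}]$ directly: you have the global bound $[u_{n}\chi_{i}]\leq C\,\|u_{n}\|_{H^{s}(\R^{3})}$ (this is what you actually need to peel off the excess power $\theta r^{*}-2$), together with the summed bound
\[
\sum_{i}[u_{n}\chi_{i}]^{2}\leq C\,\|u_{n}\|_{H^{s}(\R^{3})}^{2},
\]
which follows because $\sum_{i}\chi_{i}(x)^{2}\leq\kappa$ handles the first term of the splitting and $\sum_{i}g_{i}(z)\leq C$ (uniformly in $z$, by the decay $g_{i}(z)\lesssim R^{3}|z-y_{i}|^{-3-2s}$ and the lattice-like distribution of the centres) handles the second. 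With these two ingredients your summation step goes through unchanged and the proof closes.
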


\noindent
At this point we can prove the following result.
\begin{lem}\label{compactness}
Let $d\in\R$ and $(u_{n})\subset H^{s}_{\mu}$ be a $(PS)_{d}$ sequence for $I_{\mu}$.
Then, one of the following alternatives occurs:
\begin{compactenum}[$(i)$]
\item $u_{n}\rightarrow 0$ in $H^{s}_{\mu}$; 
\item  there are a sequence $(y_{n})\subset \R^{3}$ and constants $R, \beta>0$ such that 
\begin{equation*}
\liminf_{n\rightarrow \infty} \int_{B_{R}(y_{n})} |u_{n}|^{4} dx \geq \beta >0. 
\end{equation*}
\end{compactenum}
\end{lem}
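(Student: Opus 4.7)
My plan is to establish the dichotomy via a standard Lions-type concentration-compactness argument adapted to the autonomous functional $I_\mu$. The first ingredient is the boundedness of $(u_n)$ in $H^s_\mu$: forming $I_\mu(u_n) - \tfrac{1}{4}\langle I'_\mu(u_n), u_n\rangle$ and using $(f_3)$ together with the non-negativity of the Poisson term $\int_{\R^3} \phi^t_{u_n} u_n^2\,dx$, one obtains $\tfrac{1}{4}\|u_n\|_\mu^2 \leq d + o(1) + o(1)\|u_n\|_\mu$, exactly as in Lemma \ref{bPS}; this forces $\|u_n\|_\mu$ to stay bounded.

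Next, I would negate $(ii)$ and convert it into the uniform-vanishing hypothesis of Lemma \ref{Lions}. If $(ii)$ fails, then for every $R > 0$ one must have
$$
\lim_{n\rightarrow \infty} \sup_{y\in\R^{3}} \int_{B_{R}(y)} |u_{n}|^{4}\,dx = 0,
$$
since otherwise, passing to a subsequence and choosing suitable centres $y_n$, one would obtain $\int_{B_R(y_n)} |u_n|^{4}\,dx \geq \beta > 0$, contradicting the negation of $(ii)$. Because $4 \in [2, 2^{*}_{s})$ (recall that $(f_2)$ forces $s > 3/4$, so $2^{*}_{s} > 4$), Lemma \ref{Lions} yields $u_n \to 0$ strongly in $L^{r}(\R^{3}, \R)$ for every $r \in (4, 2^{*}_{s})$.

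With this vanishing in hand, the growth bound of Lemma \ref{propf}-(\ref{propf1}) provides, for every $\xi > 0$,
$$
\int_{\R^{3}} f(|u_{n}|^{2}) |u_{n}|^{2}\,dx \leq \xi \|u_{n}\|_{L^{4}(\R^{3})}^{4} + C_\xi \|u_{n}\|_{L^{q}(\R^{3})}^{q} \leq \xi C + o(1),
$$
because $\|u_n\|_{L^{4}}$ is controlled via the embedding of Lemma \ref{Sembedding} and Step~1, while $\|u_n\|_{L^{q}} \to 0$ by the previous step. Letting $\xi \to 0^{+}$ gives $\int_{\R^{3}} f(|u_{n}|^{2}) |u_{n}|^{2}\,dx \to 0$. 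Testing $\langle I'_\mu(u_n), u_n\rangle = o(1)$ then produces
$$
\|u_{n}\|_{\mu}^{2} + \int_{\R^{3}} \phi^{t}_{u_{n}} u_{n}^{2}\,dx = o(1),
$$
and since both summands on the left are non-negative by Lemma \ref{poisson}-(4), I conclude $\|u_n\|_\mu \to 0$, i.e.\ alternative $(i)$ holds.

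The only mildly delicate point is the passage from $\neg(ii)$ to the uniform-vanishing hypothesis required by Lions' lemma (which must be verified for \emph{every} $R>0$ from the pointwise-in-$R$ failure of $(ii)$); once this is in place, the remainder is routine and rests solely on the subcritical growth of $f$, the fractional Sobolev embedding, and the sign of the nonlocal Poisson contribution.
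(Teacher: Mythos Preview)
Your proof is correct and follows essentially the same approach as the paper: negate $(ii)$, invoke the fractional Lions lemma (Lemma~\ref{Lions}) with $q=4$ after establishing boundedness via $I_\mu(u_n)-\tfrac14\langle I'_\mu(u_n),u_n\rangle$, and conclude from $\langle I'_\mu(u_n),u_n\rangle=o(1)$ that $\|u_n\|_\mu\to 0$. The only cosmetic difference is that the paper also records $\int_{\R^3}\phi^t_{u_n}u_n^2\,dx\to 0$ separately (via $u_n\to 0$ in $L^{12/(3+2t)}$), whereas you simply discard it by non-negativity---either works.
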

\begin{proof}
	Suppose that $(ii)$ does not hold true. Then, for every $R>0$ we have
	\begin{equation*}
	\lim_{n\rightarrow \infty} \sup_{y\in \R^{3}} \int_{B_{R}(y)} |u_{n}|^{4} dx=0. 
	\end{equation*}
	Arguing as in Lemma \ref{bPS}, we can see that $(u_{n})$ is bounded in $H^{s}_{\mu}$. In view of Lemma \ref{Lions}, we infer that $\|u_{n}\|_{L^{r}(\R^{3})}\rightarrow 0$ for all $r\in (2, 2^{*}_{s})$.
	This and $(f_1)$ and $(f_2)$ imply that
	\begin{equation*}
	\int_{\R^{3}} f(u_{n}^{2})u_{n}^{2}dx\rightarrow 0.
	\end{equation*}
	Since $u_{n}\rightarrow 0$ in $L^{\frac{12}{4+3t}}(\R^{3}, \R)$, from Lemma \ref{poisson}-$(4)$ we deduce that
	$$
	\int_{\R^{3}} \phi^{t}_{u_{n}} u_{n}^{2}dx\rightarrow 0.
	$$
	Therefore
\begin{align*}
o_{n}(1)=\langle I'_{\mu}(u_{n}), u_{n}\rangle=\|u_{n}\|_{\mu}^{2}+\int_{\R^{3}} \phi^{t}_{u_{n}} u_{n}^{2}dx-\int_{\R^{3}} f(u_{n}^{2})u_{n}^{2}dx=\|u_{n}\|^{2}_{\mu}+o_{n}(1)
\end{align*}
which implies that $u_{n}\rightarrow 0$ in $H^{s}_{\mu}$ as $n\rightarrow \infty$.
\end{proof}

\noindent
In the next result we show that $m_{\mu}$ can be achieved.
\begin{lem}\label{FS}
Let $(u_{n})\subset \mathcal{M}_{\mu}$ be a sequence satisfying $I_{\mu}(u_{n})\rightarrow m_{\mu}$. Then, up to subsequences, the following alternatives hold:
\begin{compactenum}[(i)]
\item $(u_{n})$ strongly converges in $H^{s}_{\mu}$, 
\item there exists a sequence $(\tilde{y}_{n})\subset \R^{3}$ such that,  up to a subsequence, $v_{n}(x)=u_{n}(x+\tilde{y}_{n})$ converges strongly in $H^{s}_{\mu}$.
\end{compactenum}
In particular, there exists a minimizer $w\in H^{s}_{\mu}$ for $I_{\mu}$ with $I_{\mu}(w)=m_{\mu}$.
\end{lem}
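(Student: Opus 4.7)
The plan is to first use Ekeland's variational principle on $\mathcal{M}_{\mu}$ to promote the minimizing sequence to a Palais-Smale sequence for $I_{\mu}$ at level $m_{\mu}$, then to apply Lemma \ref{compactness} together with the translation invariance of the autonomous problem to produce a nontrivial weak limit $w$, and finally to upgrade weak to strong convergence via Brezis-Lieb type splittings combined with the monotonicity provided by $(f_3)$.

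First I would invoke Ekeland's principle on $\mathcal{M}_{\mu}$ together with the standard transversality argument showing that almost-critical points of $I_{\mu}|_{\mathcal{M}_{\mu}}$ are almost-critical for the free functional, so as to replace $(u_{n})$ by a PS sequence for $I_{\mu}$ at level $m_{\mu}$; boundedness follows exactly as in Lemma \ref{bPS}. The lower bound $\|u\|_{\mu}\geq K$ on $\mathcal{M}_{\mu}$ (obtained as in Lemma \ref{LemNeharyE}-(i)) gives $m_{\mu}>0$, so the vanishing alternative of Lemma \ref{compactness} is ruled out and there exist $R,\beta>0$ and $(y_{n})\subset\R^{3}$ with $\int_{B_{R}(y_{n})}|u_{n}|^{4}\,dx\geq\beta$. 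If $(y_{n})$ remains bounded, the local compactness $H^{s}\hookrightarrow L^{4}_{\rm loc}$ forces the weak limit $u$ of $u_{n}$ to be nonzero and we are in alternative $(i)$; if $|y_{n}|\to\infty$, I set $v_{n}(x):=u_{n}(x+y_{n})$, which by translation invariance of the autonomous problem remains a bounded PS sequence at level $m_{\mu}$ now concentrating at the origin, so $v_{n}\rightharpoonup v\neq 0$ and we are in alternative $(ii)$. Let $w_{n}$ denote the chosen sequence and $w\neq 0$ its weak limit.

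To see that $w$ is a minimizer, I would pass to the limit in $\langle I'_{\mu}(w_{n}),\varphi\rangle=o(1)$ for $\varphi\in C_{c}^{\infty}(\R^{3})$ using weak convergence in the Gagliardo seminorm, local strong convergence in $L^{r}$ for $r\in[1,2^{*}_{s})$, and Lemma \ref{poisson}-$(2)$ for the Poisson term; this yields $I'_{\mu}(w)=0$, hence $w\in\mathcal{M}_{\mu}$ and $I_{\mu}(w)\geq m_{\mu}$. The reverse inequality follows because $(f_3)$ makes the integrand in
\[
I_{\mu}(u)-\tfrac{1}{4}\langle I'_{\mu}(u),u\rangle=\tfrac{1}{4}\|u\|_{\mu}^{2}+\tfrac{1}{4}\int_{\R^{3}}\bigl(f(u^{2})u^{2}-2F(u^{2})\bigr)\,dx
\]
pointwise nonnegative, so weak lower semicontinuity together with Fatou yield $I_{\mu}(w)\leq\liminf_{n}I_{\mu}(w_{n})=m_{\mu}$.

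The main obstacle lies in upgrading weak to strong convergence. Writing $w_{n}=w+z_{n}$ with $z_{n}\rightharpoonup 0$, I would combine the Brezis-Lieb splitting for the Gagliardo seminorm, the real autonomous counterpart of Lemma \ref{propf}-$(iii)$ for $F(\cdot^{2})$, and the splitting of the Poisson term provided by Lemma \ref{CCS} (with $\hat{u}_{j}$ replaced by the genuine weak limit $w$, which is legitimate precisely because $2s+2t>3$) to deduce $I_{\mu}(z_{n})\to 0$ and $\langle I'_{\mu}(z_{n}),z_{n}\rangle\to 0$. Applying the identity displayed above to $z_{n}$ then forces $\|z_{n}\|_{\mu}^{2}\to 0$, which in the bounded case of $(y_{n})$ gives alternative $(i)$ and in the unbounded case gives alternative $(ii)$, completing the argument.
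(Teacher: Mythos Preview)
Your proof is correct and follows the same overall strategy as the paper: upgrade the minimizing sequence to a $(PS)_{m_\mu}$ sequence for the free functional, rule out vanishing via Lemma~\ref{compactness} and the translation invariance of the autonomous problem, obtain a nontrivial weak limit $w$, check that $I'_\mu(w)=0$ so that $w\in\mathcal{M}_\mu$, and conclude $I_\mu(w)=m_\mu$ from the identity $I_\mu-\tfrac14\langle I'_\mu(\cdot),\cdot\rangle=\tfrac14\|\cdot\|_\mu^2+\text{(nonneg.)}$ together with Fatou's lemma.

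The one substantive difference is in the passage from weak to strong convergence. You carry out an explicit Brezis--Lieb decomposition on the remainder $z_n=w_n-w$, invoking the real autonomous analogues of Lemmas~\ref{propf} and~\ref{CCS}, to obtain $I_\mu(z_n)\to0$ and $\langle I'_\mu(z_n),z_n\rangle\to0$ and hence $\|z_n\|_\mu\to0$. The paper instead stops at $I_\mu(w)=m_\mu$; strong convergence is not written out but follows because equality throughout the Fatou chain
\[
m_\mu\le \tfrac14\|w\|_\mu^2+\tfrac12\!\int\!\bigl(\tfrac12 f(w^2)w^2-F(w^2)\bigr)\le \liminf_n\Bigl[\tfrac14\|w_n\|_\mu^2+\tfrac12\!\int\!\bigl(\tfrac12 f(w_n^2)w_n^2-F(w_n^2)\bigr)\Bigr]=m_\mu
\]
forces $\|w_n\|_\mu\to\|w\|_\mu$ along a subsequence, which combined with weak convergence yields strong convergence in the Hilbert space $H^s_\mu$. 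Your route is a bit heavier (it requires the nonlocal Brezis--Lieb splitting, which is exactly where the hypothesis $2s+2t>3$ enters) but makes the compactness fully explicit; the paper's route is shorter but leaves this last step to the reader.
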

\begin{proof}
Using a version of the mountain pass theorem without $(PS)$ condition (see \cite{W}), we may suppose that  $(u_{n})$ is a $(PS)_{m_{\mu}}$ sequence for $I_{\mu}$. Arguing as in Lemma \ref{bPS}, it is easy to check that $(u_{n})$ is bounded in $H^{s}_{\mu}$ so we may assume that $u_{n}\rightharpoonup u$ in $H^{s}_{\mu}$. The weak convergence together with Lemma \ref{Sembedding}, Lemma \ref{propf} and Lemma \ref{poisson} imply that $I'_{\mu}(u)=0$. 
Now, we assume that $u\neq 0$. Since $u\in \mathcal{M}_{\mu}$, we can use $(f_3)$ and Fatou's Lemma to see that
\begin{align*}
m_{\mu}&\leq I_{\mu}(u)-\frac{1}{4}\langle I'_{\mu}(u), u\rangle \\
&=\frac{1}{4}\|u\|_{\mu}^{2}+\frac{1}{2}\int_{\R^{3}}\frac{1}{2} f(u^{2})u-F(u^{2})\, dx \\
&\leq \liminf_{n\rightarrow \infty} \left[I_{\mu}(u_{n})-\frac{1}{4}\langle I'_{\mu}(u), u\rangle\right]=m_{\mu},
\end{align*}
which implies that $I_{\mu}(u)=m_{\mu}$. 

Let us consider the case $u=0$. Since $m_{\mu}>0$ and $I_{\mu}$ is continuous, we can see that $\|u_{n}\|_{\mu}\not\rightarrow 0$. Then we can use Lemma \ref{compactness} to find a sequence $(y_{n})\subset \R^{3}$ and constants $R, \beta>0$ such that 
$$
\liminf_{n\rightarrow \infty} \int_{B_{R}(y_{n})}|u_{n}|^{4}dx\geq \beta>0.
$$
Let us define $v_{n}=u_{n}(\cdot+y_{n})$, and we note that $v_{n}$ has a nontrivial weak limit $v$ in $H^{s}_{\mu}$. It is clear that also $(v_{n})$ is a $(PS)_{m_{\mu}}$ sequence for $I_{\mu}$, and arguing as before we can deduce that $I_{\mu}(v)=m_{\mu}$. In conclusion, we have proved that for all $\mu>0$, problem \eqref{AP} admits a ground state solution.

Now, let $u$ be a ground state for \eqref{AP}. Taking $\varphi=u^{-}$ as test function in $\langle I'_{\mu}(u), \varphi\rangle=0$, it is easy to check that $u\geq 0$ in $\R^{3}$. In particular, observing that $\phi_{u}^{t}\geq 0$ and $f$ has a subcritical growth, we can argue as in \cite[Proposition 5.1.1]{DMV} to see that $u\in L^{\infty}(\R^{3}, \R)$. 
In particular, we have
\begin{align*}
\phi_{u}^{t}(x)&= \int_{|y-x|\geq 1} \frac{|u(y)|^{2}}{|x-y|^{3-2t}}dy+ \int_{|y-x|<1} \frac{|u(y)|^{2}}{|x-y|^{3-2t}}dy\\
&\leq \|u\|_{L^{2}(\R^{3})}^{2}+\|u\|^{2}_{L^{\infty}(\R^{3})} \int_{|y-x|<1} \frac{1}{|x-y|^{3-2t}}dy\leq C,
\end{align*}
so that $g(x)=f(u^{2})u-\mu u-\phi_{u}^{t}u\in L^{\infty}(\R^{3}, \R)$. Applying \cite[Lemma $3.4$]{FQT} we can deduce that $u\in C^{0, \alpha}(\R^{3}, \R)$. Let $w$ be a solution to $-\Delta w=\mu u-\phi_{u}^{t}u+f(u^2)u\in C^{0, \alpha}(\R^{3}, \R)$. From the Schauder estimates for the Laplacian, we know that $w\in C^{2, \alpha}(\R^{3})$. It follows from $2s+\alpha>1$ that $(-\Delta)^{1-s}w\in C^{1, 2s+\alpha-1}$, and being $(-\Delta)^{s}(u-(-\Delta)^{1-s}w)=0$, we get that $u-(-\Delta)^{1-s}w$ is harmonic and $u$ has the same regularity of $(-\Delta)^{1-s}w$. Therefore $u\in C^{1, 2s+\alpha-1}(\R^{3}, \R)$. Recalling the following integral representation for the fractional Laplacian \cite[Lemma $3.2$]{DPV}
$$
(-\Delta)^{s}u(x)=-\frac{c_{3,s}}{2}\int_{\R^{3}} \frac{u(x+y)+u(x-y)-2u(x)}{|y|^{3+2s}} dy,
$$
we can see that if $u(x_{0})=0$ for some $x_{0}\in \R^{3}$, then 
$$
0>(-\Delta)^{s}u(x_{0})=-\mu u(x_{0})-\phi_{u}^{t}u(x_{0})+f(u(x_{0})^{2})u(x_{0})=0
$$
that is a contradiction.
Therefore $u>0$ in $\R^{3}$. Since $u\in C^{1, \gamma}(\R^{3}, \R)\cap L^{2}(\R^{3}, \R)$, we can deduce that $u(x)\rightarrow 0$ as $|x|\rightarrow \infty$. Then we can find $R>0$ such that $(-\Delta)^{s}u+\frac{\mu}{2}u\leq 0$ in $|x|>R$. Using \cite[Lemma 4.3]{FQT} we know that there exists a positive function $w$ such that for $|x|>R$ (taking $R$ larger if it is necessary), it holds $(-\Delta)^{s}w+\frac{\mu}{2}w\geq 0$ and $w(x)=\frac{C_{0}}{|x|^{3+2s}}$. 
In view of the continuity of $u$ and $w$ there exists some constant $C_{1}>0$ such that $z=u-C_{1}w\leq 0$ on $|x|=R$.
Moreover, we can see that $(-\Delta)^{s}z+\frac{\mu}{2}z\leq 0$ in $|x|\geq R$. From the maximum principle we can deduce that $z\leq 0$ in $|x|\geq R$, that is $0<u(x)\leq C_{1}w(x)\leq \frac{C_{2}}{|x|^{3+2s}}$ for all $|x|$ big enough. 
This last estimate will be useful to prove the existence of a nontrivial solution to \eqref{Pe}.

\end{proof}



\section{A compactness condition}
In this section we prove some compactness results for the functional $J_{\e}$. 
We start proving the following property on the $(PS)_{d}$ sequences for $J_{\e}$ in the  noncoercive case $V_{\infty}<\infty$.
\begin{lem}\label{lem2.3}
Let $d\in\R$. Assume that $V_{\infty}<\infty$ and let $(v_{n})$ be a $(PS)_{d}$ sequence for $J_{\e}$ in $\h$ with $v_{n}\rightharpoonup 0$ in $\h$. If $v_{n}\not \rightarrow 0$ in $\h$, then $d\geq m_{V_{\infty}}$, where $m_{V_{\infty}}$ is the minimax level of $I_{V_{\infty}}$.
\end{lem}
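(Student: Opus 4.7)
My strategy is to extract from $(v_n)$ a profile concentrating at infinity, use assumption \eqref{RV} and the diamagnetic inequality to compare $J_\e(v_n)$ with the value of the autonomous functional $I_{V_\infty-\eta}$ on that profile, and then let $\eta\to 0$. By Lemma \ref{bPS}, $(v_n)$ is bounded in $\h$. First I argue that $(|v_n|)$ cannot vanish in Lions' sense: if $\sup_{y\in\R^3}\int_{B_R(y)}|v_n|^4\,dx\to 0$ for some $R>0$, then by the diamagnetic inequality (Lemma \ref{DI}) $|v_n|$ lies in a bounded subset of $H^s(\R^3,\R)$ and Lemma \ref{Lions} forces $|v_n|\to 0$ in $L^r$ for $r\in(2,2^{*}_{s})$; combining with $(f_1)$--$(f_2)$ via Lemma \ref{propf}-$(i)$ and with Lemma \ref{poisson}-$(4)$, I would get $\int f(|v_n|^2)|v_n|^2\,dx\to 0$ and $\int\phi^{t}_{|v_n|}|v_n|^2\,dx\to 0$, so $\langle J_\e'(v_n),v_n\rangle\to 0$ would force $\|v_n\|_\e\to 0$, contradicting $v_n\not\to 0$. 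Hence there exist $(y_n)\subset\R^3$ and $R,\beta>0$ with $\int_{B_R(y_n)}|v_n|^4\,dx\geq\beta$; since $v_n\rightharpoonup 0$ and $\h\hookrightarrow L^4_{loc}$ is compact (Lemma \ref{Sembedding}), necessarily $|y_n|\to\infty$.

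Set $w_n(x):=|v_n(x+y_n)|$; by Lemma \ref{DI}, $(w_n)$ is bounded in $H^s(\R^3,\R)$, so up to a subsequence $w_n\rightharpoonup w$ in $H^s$ with $w\neq 0$. The Poisson and nonlinear parts depend only on the modulus and are translation-invariant (Lemma \ref{poisson}-$(3)$), so $\int\phi^{t}_{|v_n|}|v_n|^2\,dx=\int\phi^{t}_{w_n}w_n^2\,dx$ and $\int F(|v_n|^2)\,dx=\int F(w_n^2)\,dx$. Fixing $\eta>0$, \eqref{RV} yields $R_\eta$ with $V\geq V_\infty-\eta$ on $B_{R_\eta}^c$; since $v_n\to 0$ in $L^2_{loc}$ and $V_\e$ is locally bounded, $\int V_\e|v_n|^2\,dx\geq (V_\infty-\eta)\|w_n\|_{L^2}^2+o(1)$. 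Combined with $[v_n]_{A_\e}^2\geq [w_n]^2$, this gives $\|v_n\|_\e^2\geq \|w_n\|_{V_\infty-\eta}^2+o(1)$, hence
\[
J_\e(v_n)\geq I_{V_\infty-\eta}(w_n)+o(1),\qquad \langle I_{V_\infty-\eta}'(w_n),w_n\rangle\leq\langle J_\e'(v_n),v_n\rangle+o(1)=o(1).
\]

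To conclude, I use
\[
J_\e(v_n)-\tfrac{1}{4}\langle J_\e'(v_n),v_n\rangle=\tfrac{1}{4}\|v_n\|_\e^2+\tfrac{1}{4}\int g(|v_n|^2)\,dx,\qquad g(s):=f(s)s-2F(s),
\]
with $g\geq 0$ (from $(f_3)$ and $\theta>4$) and $g'(s)=f'(s)s-f(s)\geq 0$ (from $(f_5)$), so $g$ is nondecreasing on $[0,\infty)$. The LHS tends to $d$; the previous step and Fatou's lemma applied to the weakly lsc norm and to the non-negative integrand $g(w_n^2)$ give $d\geq I_{V_\infty-\eta}(w)-\tfrac{1}{4}\langle I_{V_\infty-\eta}'(w),w\rangle$. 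A Brezis--Lieb splitting combining Lemma \ref{CCS} with the classical Brezis--Lieb for $f$ pushes $\langle I_{V_\infty-\eta}'(w_n),w_n\rangle\leq o(1)$ to the limit $\langle I_{V_\infty-\eta}'(w),w\rangle\leq 0$. Therefore the Nehari scaling $\tau\in(0,1]$ of $w$ w.r.t.\ $I_{V_\infty-\eta}$ exists, and the monotonicity of $g$ together with $\tau^2\leq 1$ gives $I_{V_\infty-\eta}(\tau w)\leq I_{V_\infty-\eta}(w)-\tfrac{1}{4}\langle I_{V_\infty-\eta}'(w),w\rangle\leq d$, whence $m_{V_\infty-\eta}\leq d$. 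Letting $\eta\to 0^+$ and using the continuity of $\mu\mapsto m_\mu$ closes the argument.

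The principal technical hurdle is the passage $\langle I_{V_\infty-\eta}'(w_n),w_n\rangle\leq o(1)\Rightarrow\langle I_{V_\infty-\eta}'(w),w\rangle\leq 0$: direct weak lower semicontinuity carries the wrong sign for the $-\int f(w^2)w^2$ contribution, so one needs a careful Brezis--Lieb splitting together with a Lions-type analysis of the residual $w_n-w$, or, alternatively, an identification of $w$ as an exact critical point of $I_{V_\infty}$ by testing $J_\e'(v_n)$ against translates $e^{\imath A_\e(y_n)\cdot(x-y_n)}\psi(x-y_n)$ that compensate the magnetic phase and exploit the smoothness of $A$ on $\supp\psi$.
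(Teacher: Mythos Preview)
Your approach is genuinely different from the paper's, and the gap you yourself flag is real and not closed. The paper never extracts a translated profile $w$ at all: instead it takes $t_{n}>0$ with $t_{n}|v_{n}|\in\mathcal{M}_{V_{\infty}}$, proves $\limsup_{n}t_{n}\leq 1$ by subtracting the two Nehari relations and using $(f_{4})$ together with the diamagnetic inequality, and then compares energies directly. In the case $t_{n}\to 1$ the comparison $J_{\e}(v_{n})\geq I_{V_{\infty}}(t_{n}|v_{n}|)+o_{n}(1)-C\zeta$ is immediate; in the case $t_{n}\to t_{0}<1$ the identity
\[
I_{V_{\infty}}(t_{n}|v_{n}|)-\tfrac{1}{4}\langle I'_{V_{\infty}}(t_{n}|v_{n}|),t_{n}|v_{n}|\rangle
=\tfrac{t_{n}^{2}}{4}\|\,|v_{n}|\,\|_{V_{\infty}}^{2}+\tfrac{1}{4}\!\int g(t_{n}^{2}|v_{n}|^{2})\,dx
\]
together with $t_{n}<1$ and the monotonicity of $g(s)=f(s)s-2F(s)$ bounds it by $J_{\e}(v_{n})-\tfrac{1}{4}\langle J'_{\e}(v_{n}),v_{n}\rangle+C\zeta+o_{n}(1)$. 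No weak limit, no Brezis--Lieb splitting, and no continuity of $\mu\mapsto m_{\mu}$ are needed.

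In your argument, the step $\langle I'_{V_{\infty}-\eta}(w_{n}),w_{n}\rangle\leq o(1)\Rightarrow \langle I'_{V_{\infty}-\eta}(w),w\rangle\leq 0$ does not follow from Brezis--Lieb alone: the splitting gives $\langle I'(w_{n}),w_{n}\rangle=\langle I'(w),w\rangle+\langle I'(w_{n}-w),w_{n}-w\rangle+o(1)$, but you have no control on the sign of the residual term (it can be negative once $w_{n}-w$ is past its own Nehari radius). Iterating Lions does not help, since each profile's contribution may carry either sign and you only possess a one-sided inequality at the level of $w_{n}$. The alternative you sketch---testing $J'_{\e}(v_{n})$ against phase-corrected translates---would at best produce a \emph{complex} weak limit $\tilde w$ solving a limit problem with frozen magnetic potential $A_{\e}(y_{n})$; relating $|\tilde w|$ to $w=\text{w-}\lim|v_{n}(\cdot+y_{n})|$ and then to $\mathcal{M}_{V_{\infty}}$ requires additional work that is neither standard nor supplied. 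Finally, the left-continuity of $\mu\mapsto m_{\mu}$ at $V_{\infty}$ that you invoke is not proved anywhere in the paper. The cleanest fix is to replace your profile extraction by the paper's Nehari-projection device: put $t_{n}|v_{n}|\in\mathcal{M}_{V_{\infty}}$, show $\limsup t_{n}\leq 1$ via $(f_{4})$, and compare energies as above.
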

\begin{proof}
	Let $(t_{n})\subset (0, +\infty)$ be a sequence such that $(t_{n}|v_{n}|)\subset \mathcal{M}_{V_{\infty}}$.
	Then our first aim is to prove that $\limsup_{n\rightarrow \infty} t_{n} \leq 1$.\\
	Assume by contradiction that there exist $\delta>0$ and a subsequence, still denoted by $(t_{n})$, such that 
	\begin{equation}\label{tv6}
	t_{n}\geq 1+ \delta \quad \forall n\in \mathbb{N}. 
	\end{equation}
	Since $(v_{n})$ is a $(PS)_{d}$ sequence for $J_{\e}$, we know that $(v_{n})$ is bounded in view of Lemma \ref{bPS} and from $\langle J'_{\e}(v_{n}), v_{n}\rangle=o_{n}(1)$ we can see that
	\begin{equation}\label{tv7}
	[v_{n}]_{A_{\e}}^{2} + \int_{\R^{3}} V_{\e} (x) |v_{n}|^{2} dx+\int_{\R^{3}} \phi_{|v_{n}|}^{t}|v_{n}|^{2}\, dx = \int_{\R^{3}} f(|v_{n}|^{2}) |v_{n}|^{2} dx +o_{n}(1). 
	\end{equation}
	Recalling that $t_{n}|v_{n}| \in \mathcal{M}_{V_{\infty}}$ we have
	\begin{equation}\label{tv8}
	 t_{n}^{2}([|v_{n}|]^{2}+  V_{\infty} |v_{n}|_2^{2})+ t_{n}^{4}\int_{\R^{3}} \phi_{|v_{n}|}^{t}|v_{n}|^{2}\, dx =  \int_{\R^{3}} f(t^{2}_{n}|v_{n}|^{2}) t_{n}^{2}|v_{n}|^{2} dx. 
	\end{equation}
	Putting together 
	\eqref{tv7}, \eqref{tv8} and applying
	Lemma \ref{DI} we obtain
	\begin{equation}\label{tv88}
	o_{n}(1)+\left(\frac{1}{t_{n}^{2}}-1\right)[v_{n}]_{A_{\e}}^{2}+\int_{\R^{3}} \left[ \frac{f(t^{2}_{n}|v_{n}|^{2})}{t_{n}^{2}|v_{n}|^{2}} - \frac{f(|v_{n}|^{2})}{|v_{n}|^{2}} \right]|v_{n}|^{4} \,dx\leq \int_{\R^{3}} \left( \frac{V_{\infty}}{t_{n}^{2}} - V_{\e} (x)\right) |v_{n}|^{2} dx +o_{n}(1).
	\end{equation}
Taking into account \eqref{RV}, we know that for every $\zeta>0$ there exists $R=R(\zeta)>0$ such that 
\begin{equation}\label{tv9}
\frac{V_{\infty}}{t_{n}^{2}} - V_{\e} (x) \leq  \zeta \quad \mbox{ for any } |x|\geq R. 
\end{equation}
In view of \eqref{tv9}, $|v_{n}|\rightarrow 0$ in $L^{2}(B_{R}(0), \R)$ (because Lemma \ref{Sembedding} and the weak convergence yield $v_{n}\rightarrow 0$ in $L^{2}(B_{R}(0), \C)$) and $(v_{n})$ in $\h$ is bounded, we have
	\begin{align*}
	\int_{\R^{3}} \left( \frac{V_{\infty}}{t_{n}^{2}} - V_{\e} (x)\right) |v_{n}|^{2} dx
	&\leq o_{n}(1)+ \zeta C. 
	\end{align*}
	This fact and \eqref{tv88} yield
	\begin{equation}\label{tv10}
	\int_{\R^{3}} \left[ \frac{f(t^{2}_{n}|v_{n}|^{2})}{t_{n}^{2}|v_{n}|^{2}} - \frac{f(|v_{n}|^{2})}{|v_{n}|^{2}} \right]|v_{n}|^{4} \,dx\leq \zeta C +o_{n}(1).
	\end{equation}
	Since $v_{n}\not \rightarrow 0$, we can use Lemma \ref{compactness} to find a sequence $(y_{n})\subset \R^{3}$, and two constants $\bar{R}, \beta$ such that
	\begin{equation}\label{tv11}
	\int_{B_{\bar{R}}(y_{n})} |v_{n}|^{4} dx \geq \beta>0.
	\end{equation}
	Set $w_{n}= |v_{n}|(\cdot+y_{n})$. By \eqref{RV}, Lemma \ref{DI} and the boundedness of $(v_{n})$ in $\h$, we deduce that $(w_{n})$ is bounded in $H^{s}(\R^{3}, \R)$, that is
	\[
	\|w_{n}\|_{V_{0}}^{2}= \|v_{n}\|_{V_{0}}^{2} \leq 
	\|v_{n}\|_{\e}^{2}\leq C.
	\]
	Hence $w_{n}\rightharpoonup w$ in $H^{s}(\R^{3}, \R)$ and $w_{n}\rightarrow w$ in $L^{4}_{\rm loc}(\R^{3}, \R)$. Moreover, by \eqref{tv11}, there exists $\Omega \subset \R^{3}$ with positive measure and such that $w\neq 0$ in $\Omega$.
	Putting together \eqref{tv6}  and \eqref{tv10} we can infer
	\begin{align*}
	0<\int_{\Omega}   \left[ \frac{f((1+\delta)^{2}w_{n}^{2})}{(1+\delta)^{2}w_{n}^{2}} - \frac{f(w_{n}^{2})}{w_{n}^{2}} \right]w_{n}^{4} \,dx    \leq \zeta C+o_{n}(1). 
	\end{align*}
	Taking the limit as $n\rightarrow \infty$ in the above inequality and applying Fatou's Lemma and $(f_4)$ we obtain
	\begin{align*}
	0<\int_{\Omega}  \left[ \frac{f((1+\delta)^{2}w^{2})}{(1+\delta)^{2}w^{2}} - \frac{f(w^{2})}{w^{2}} \right]w^{4} \,dx   \leq \zeta C 
	\end{align*}
	for any $\zeta>0$, which leads to a contradiction. \\
	Now, we consider the following two cases. \\
	{\bf Case 1:} $\limsup_{n\rightarrow \infty} t_{n}=1$.\\
	In this case there exists  a subsequence still denoted by $(t_{n})$ such that $t_{n}\rightarrow 1$. Since $(v_{n})$ is a $(PS)_{d}$ sequence for $J_{\e}$, $m_{V_{\infty}}$ is the minimax level of $I_{V_{\infty}}$, and Lemma \ref{DI}, we get
	\begin{equation}\label{tv12new}
	\begin{split}
	d+ o_{n}(1)&= J_{\e}(v_{n}) \\
	&\geq J_{\e}(v_{n}) -I_{V_{\infty}}(t_{n}|v_{n}|) + m_{V_{\infty}}\\
	&\geq \frac{1-t_{n}^{2}}{2} [|v_{n}|]^{2} + \frac{1}{2} \int_{\R^{3}} \left( V_{\e} (x) - t_{n}^{2} V_{\infty}\right) |v_{n}|^{2} dx \\
	&\qquad
	+\frac{1}{4}\int_{\R^{3}}(1-t_{n}^{4})\phi^{t}_{|v_{n}|}|v_{n}|^{2}\, dx +\frac{1}{2}\int_{\R^{3}} \left[ F(t^{2}_{n} |v_{n}|^{2}) -F(|v_{n}|^{2}) \right] \, dx+ m_{V_{\infty}}.
	\end{split}
	\end{equation}
From the boundedness of $(|v_{n}|)$ in $H^{s}(\R^{3}, \R)$ and $t_{n}\rightarrow 1$, we have
\begin{equation}\label{tv14}
\frac{(1-t_{n}^{2})}{2} [|v_{n}|]^{2}= o_{n}(1). 
\end{equation}
	Using the Mean Value Theorem, Lemma \ref{propf}-$(i)$, $t_{n}\rightarrow 1$, and the boundedness of $(|v_{n}|)$, we can see that
	\begin{equation}\label{tv16}
	\begin{split}
	\int_{\R^{3}} \left[ F(t^{2}_{n} |v_{n}|^{2}) -F(|v_{n}|^{2}) \right] \, dx
	=o_{n}(1).
	\end{split}
	\end{equation}
	Then, \eqref{RV}, \eqref{tv16}, $(v_{n})$ is bounded in $\h$ yield
	\begin{align*}
	d+ o_{n}(1)\geq o_{n}(1) - \zeta C + m_{V_{\infty}}, 
	\end{align*}
	and taking the limit as $n\rightarrow \infty$ we can find $d \geq m_{V_{\infty}}$. \\
	{\bf Case 2:} $\limsup_{n\rightarrow \infty} t_{n}=t_{0}<1$. \\
	In this case there exists a subsequence still denoted by $(t_{n})$, such that $t_{n}\rightarrow t_{0}$ and $t_{n}<1$ for any $n\in \mathbb{N}$. 
	By \eqref{tv9}, $|v_{n}|\rightarrow 0$ in $L^{2}(B_{R}(0), \R)$ and $(v_{n})$ is  bounded, we can see that
	\begin{align}\label{tv17}
	\int_{\R^{3}} (V_{\infty}-V_{\e}(x))|v_{n}|^{2}\, dx\leq \zeta C+o_{n}(1). 
	\end{align}
	Let us note that the map $t\mapsto \frac{1}{2}f(t)t- F(t)$ is increasing for $t>0$ in view of $(f_4)$.
	This combined with $t_{n}|v_{n}|\in \mathcal{M}_{V_{\infty}}$, $t_{n}<1$, \eqref{tv17} and Lemma \ref{DI}, yields
	\begin{align*}
	m_{V_{\infty}} 
	&\leq I_{V_{\infty}}(t_{n}|v_{n}|) -\frac{1}{4} \langle I'_{V_{\infty}}(t_{n}|v_{n}|),t_{n} |v_{n}|\rangle \\
	&=\frac{t_{n}^{2}}{4}\left([|v_{n}|]^{2}+V_{\infty}\|v_{n}\|_{L^{2}(\R^{3})}\right)+ \frac{1}{2} \int_{\R^{3}}  \left( \frac{1}{2} f(t^{2}_{n}|v_{n}|^{2}) t^{2}_{n}|v_{n}|^{2}- F(t^{2}_{n}|v_{n}|^{2}) \right) dx \\
	&\leq \frac{1}{4}\|v_{n}\|_{\e}^{2}+ \frac{1}{2} \int_{\R^{3}} \left( \frac{1}{2}f(|v_{n}|^{2}) |v_{n}|^{2} - F(|v_{n}|^{2})\right) \,dx+\zeta C+o_{n}(1) \\
	&=J_{\e}(v_{n})-\frac{1}{4}\langle J_{\e}'(v_{n}), v_{n}\rangle+\zeta C+o_{n}(1)\\
	&= d +\zeta C+o_{n}(1). 
	\end{align*}
	Letting the limit as $\zeta\rightarrow 0$ and then $n\rightarrow \infty$, we get $d\geq m_{V_{\infty}}$.
\end{proof}

\noindent
Now, we give the conditions on the levels $c$ for which $J_{\e}$ satisfies the $(PS)_{c}$ condition.
\begin{prop}\label{prop2.1}
	The functional $J_{\e}$ satisfies the $(PS)_{c}$ condition at any level $c<m_{V_{\infty}}$ if $V_{\infty}<\infty$, and at any level $c\in \R$ if $V_{\infty}=\infty$. 
\end{prop}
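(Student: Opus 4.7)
The plan is to argue along a standard $(PS)$-splitting scheme, but tailored so that every non-local piece (the magnetic Gagliardo form, the $t$-Riesz convolution, and the nonlinear term $F$) obeys a Brezis-Lieb type decomposition thanks to Lemmas \ref{CCS} and \ref{propf}. Let $(u_n)\subset\h$ be a $(PS)_c$ sequence. By Lemma \ref{bPS} it is bounded, so up to a subsequence $u_n\rightharpoonup u$ in $\h$. Using Lemma \ref{vanishing} I would further pass to a subsequence $(u_{n_j})$ satisfying the tail decay \eqref{DL}; combining weak convergence with Lemma \ref{Sembedding}, Lemma \ref{poisson}(2), and $(f_1)$-$(f_2)$ (exactly as in Lemma \ref{lem2.3}) shows that $J'_\e(u)=0$. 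In particular, from $(f_3)$ one reads off
\[
J_\e(u)=\frac{1}{4}\|u\|_\e^2+\frac{1}{2}\int_{\R^3}\Bigl(\tfrac{1}{2}f(|u|^2)|u|^2-F(|u|^2)\Bigr)\,dx\ge 0.
\]

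Next, set $\hat u_j:=\varphi_j u$ as in \eqref{eqtruncation}, so by Lemma \ref{truncation} $\hat u_j\to u$ in $\h$, and define the remainder $v_j:=u_{n_j}-\hat u_j$, which then satisfies $v_j\rightharpoonup 0$. The splitting toolkit gives, as $j\to\infty$,
\[
J_\e(v_j)=J_\e(u_{n_j})-J_\e(\hat u_j)+o_j(1)=c-J_\e(u)+o_j(1),
\]
by elementary Brezis-Lieb for $\|\cdot\|_\e^2$, Lemma \ref{CCS}(i), Lemma \ref{propf}(iii), and continuity of $J_\e$ along $\hat u_j\to u$. Analogously, using Lemma \ref{CCS}(ii), Lemma \ref{propf}(iv), and $J'_\e(\hat u_j)\to J'_\e(u)=0$, one obtains $\langle J'_\e(v_j),\varphi\rangle=o_j(1)$ uniformly in $\varphi\in\h$ with $\|\varphi\|_\e\le 1$. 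Hence $(v_j)$ is a $(PS)_{c-J_\e(u)}$ sequence for $J_\e$ with $v_j\rightharpoonup 0$.

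The conclusion is then delivered by the two separate arguments for the two regimes. If $V_\infty<\infty$ and we had $v_j\not\to 0$, then Lemma \ref{lem2.3} would force $c-J_\e(u)\ge m_{V_\infty}$, i.e.\ $c\ge m_{V_\infty}$, contradicting the hypothesis; therefore $v_j\to 0$, and combined with $\hat u_j\to u$ this yields $u_{n_j}\to u$ in $\h$. If instead $V_\infty=\infty$, Lemma \ref{Sembedding} provides strong convergence of $(|u_n|)$ in $L^r(\R^3)$ for every $r\in[2,2^*_s)$, which by $(f_1)$-$(f_2)$ and Lemma \ref{poisson}(4) implies
\[
\int_{\R^3}f(|u_n|^2)|u_n|^2\,dx\to\int_{\R^3}f(|u|^2)|u|^2\,dx,\qquad \int_{\R^3}\phi^t_{|u_n|}|u_n|^2\,dx\to\int_{\R^3}\phi^t_{|u|}|u|^2\,dx;
\]
inserting this into $\langle J'_\e(u_n),u_n\rangle=o(1)$ and $\langle J'_\e(u),u\rangle=0$ yields $\|u_n\|_\e\to\|u\|_\e$, which together with $u_n\rightharpoonup u$ in the Hilbert space $\h$ upgrades to strong convergence.

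The main technical obstacle is checking the two displayed splittings $J_\e(v_j)=c-J_\e(u)+o_j(1)$ and $J'_\e(v_j)\to 0$ in the dual: this is precisely where the restriction $2s+2t>3$ is used, via Lemma \ref{CCS}, to prevent the nonlocal Riesz term from polluting the decomposition, and where the cut-off device $\hat u_j$ and Lemma \ref{vanishing} are essential to bypass the absence of compact Sobolev embedding on $\R^3$ in the case $V_\infty<\infty$. All remaining steps (weak continuity, sign of $J_\e(u)$, Hilbert-space argument in the coercive case) are by now routine once the splitting identities are in hand.
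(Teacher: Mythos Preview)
Your proof is correct and follows essentially the same approach as the paper: bounded $(PS)_c$ sequence, weak limit $u$ is critical with $J_\e(u)\ge 0$, pass to the subsequence of Lemma~\ref{vanishing}, split via $v_j=u_{n_j}-\hat u_j$ using Lemmas~\ref{propf}(iii)--(iv) and~\ref{CCS}(i)--(ii), then invoke Lemma~\ref{lem2.3} in the non-coercive case. The only minor variation is in the case $V_\infty=\infty$: the paper concludes by showing $\|v_j\|_\e\to 0$ from $\langle J'_\e(v_j),v_j\rangle=o_j(1)$ and $v_j\to 0$ in $L^r$, while you work directly with $u_n$, using $|u_n|\to|u|$ in $L^r$ (Lemma~\ref{Sembedding}) to match the nonlinear and Riesz terms in $\langle J'_\e(u_n),u_n\rangle$ and $\langle J'_\e(u),u\rangle$, whence $\|u_n\|_\e\to\|u\|_\e$; both routes are equally valid and short.
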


\begin{proof}
	Let $(u_{n})$ be a $(PS)_{c}$ sequence for $J_{\e}$. Hence $(u_{n})$ is bounded in $\h$ (see Lemma \ref{bPS}) and, up to a subsequence, we may assume that $u_{n}\rightharpoonup u$ in $\h$ and $u_{n}\rightarrow u$ in $L^{q}_{\rm loc}(\R^{3}, \C)$ for any $q\in [1, 2^{*}_{s})$.
	From assumptions $(f_1)$, $(f_2)$ and Lemma \ref{density}, Lemma \ref{Sembedding} and Lemma \ref{poisson}
	it is easy to see that $J'_{\e}(u)=0$. Moreover, by $(f_3)$, we can see that
	\begin{equation}\label{tv173}
	J_{\e}(u)=J_{\e}(u)-\frac{1}{4} \langle J'_{\e}(u),u\rangle=\frac{1}{4}\|u\|_{\e}^{2}+\frac{1}{2} \int_{\R^{3}} \left(\frac{1}{2}f(|u|^{2})|u|^{2}- F(|u|^{2})\right)dx\geq 0.
	\end{equation}
	Invoking Lemma \ref{vanishing} we can find a subsequence $(u_{n_{j}})\subset \h$ verifying \eqref{DL}.\\
	Now, let $v_{j}= u_{n_{j}}-\hat{u}_{j}$ where $\hat{u}_{j}$ is defined as in \eqref{eqtruncation}. 
	Using $(iii)$-$(iv)$ in Lemma \ref{propf} and $(i)$-$(ii)$ in Lemma \ref{CCS}, we can see that
	\begin{align}\label{tv171}
	J_{\e}(v_{j})= c-J_{\e}(u)+o_{j}(1)
	\end{align}
	and 
	\begin{align}\label{tv172}
	J'_{\e}(v_{j})=o_{j}(1).
	\end{align}
	Let us suppose that $V_{\infty}<\infty$ and $c<m_{V_\infty}$. From \eqref{tv173} we get $c-J_{\e}(u)\leq c <m_{V_\infty}$.
		Then, recalling that $(v_j)$ is a $(PS)_{c-J_{\e}(u)}$ sequence for $J_{\e}$ (by \eqref{tv171} and \eqref{tv172}) and that $v_{j}\rightharpoonup 0$ in $\h$, we can use  Lemma \ref{lem2.3} to deduce that $v_{j}\rightarrow 0$ in $\h$. Applying Lemma \ref{truncation} we can deduce that  $u_{n_{j}}\rightarrow u$ in $\h$ as $j\rightarrow \infty$.\\
	If $V_{\infty}=+\infty$ holds, we can use Lemma \ref{Sembedding},  $v_{j}\rightarrow 0$ in $L^{r}(\R^{3}, \C)$ for any $r\in [2, 2^{*}_{s})$, \eqref{tv172} and Lemma \ref{propf}-$(i)$ to infer that 
	$$
	\|v_{j}\|^{2}_{\e}+\int_{\R^{3}} \phi^{t}_{|v_{j}|}|v_{j}|^{2}\, dx=\int_{\R^{3}} f(|v_{j}|^{2})|v_{j}|^{2}dx+ o_{j}(1)=o_{j}(1).
	$$
	As before, we can deduce that $u_{n_{j}}\rightarrow u$ in $\h$ as $j\rightarrow \infty$ and this ends the proof of proposition.
\end{proof}

Now we show that $\N_{\e}$ is a natural constraint, namely that the constrained critical points of the functional $J_{\e}$ on $\N_{\e}$ are critical points of $J_{\e}$ in $\h$.
\begin{prop}\label{prop2.2}
	The functional $J_{\e}$ restricted to $\N_{\e}$ satisfies the $(PS)_{c}$ condition at any level $c<m_{V_{\infty}}$ if $V_{\infty}<\infty$, and at any level $c\in \R$ if $V_{\infty}=\infty$.
\end{prop}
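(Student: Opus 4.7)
The plan is to reduce this to Proposition \ref{prop2.1} by showing that $\N_\e$ is a natural constraint for $J_\e$: every $(PS)_c$ sequence of $J_\e|_{\N_\e}$ is actually a free $(PS)_c$ sequence of $J_\e$ on $\h$. Setting $G_\e(u):=\langle J'_\e(u),u\rangle$, so that $\N_\e=G_\e^{-1}(0)\setminus\{0\}$, the crux will be a uniform estimate
\[
\langle G'_\e(u),u\rangle\leq -2K^2<0\qquad\text{for all }u\in\N_\e,
\]
where $K>0$ is the constant from Lemma \ref{LemNeharyE}. This both guarantees that $\N_\e$ is a $C^1$ manifold locally and forces the Lagrange multipliers along the $(PS)$ sequence to vanish.

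First I would verify this estimate. Differentiating $t\mapsto G_\e(tu)$ at $t=1$ and using the homogeneity $\phi^t_{|tu|}=t^2\phi^t_{|u|}$ from Lemma \ref{poisson}-(3), one obtains
\[
\langle G'_\e(u),u\rangle=2\|u\|_\e^2+4\int_{\R^3}\phi^t_{|u|}|u|^2\,dx-2\int_{\R^3}f(|u|^2)|u|^2\,dx-2\int_{\R^3}f'(|u|^2)|u|^4\,dx.
\]
Substituting the Nehari identity $\|u\|_\e^2+\int\phi^t_{|u|}|u|^2\,dx=\int f(|u|^2)|u|^2\,dx$ collapses the first three terms to $2\int\phi^t_{|u|}|u|^2\,dx$. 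Then $(f_4)$ (monotonicity of $\tau\mapsto f(\tau)/\tau$, which is equivalent to $f'(\tau)\tau\geq f(\tau)$) yields pointwise $f'(|u|^2)|u|^4\geq f(|u|^2)|u|^2$, so $\int f'(|u|^2)|u|^4\,dx\geq\|u\|_\e^2+\int\phi^t_{|u|}|u|^2\,dx$, and the desired bound $\langle G'_\e(u),u\rangle\leq-2\|u\|_\e^2\leq-2K^2$ follows from Lemma \ref{LemNeharyE}.

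With this estimate the rest is standard. Given $(u_n)\subset\N_\e$ with $J_\e(u_n)\to c$ and $\|(J_\e|_{\N_\e})'(u_n)\|_*\to 0$, boundedness of $(u_n)$ in $\h$ is obtained exactly as in Lemma \ref{bPS}, since that argument only used $\langle J'_\e(u_n),u_n\rangle=0$ and $(f_3)$. By the Lagrange multiplier rule there exist $\lambda_n\in\R$ with $J'_\e(u_n)-\lambda_n G'_\e(u_n)\to 0$ in $(\h)^*$. Testing this identity against $u_n$ and using $\langle J'_\e(u_n),u_n\rangle=0$ gives $\lambda_n\langle G'_\e(u_n),u_n\rangle=o(1)$; the uniform estimate from the previous paragraph then forces $\lambda_n\to 0$. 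Boundedness of $(u_n)$ in $\h$ implies boundedness of $G'_\e(u_n)$ in $(\h)^*$, so $\lambda_n G'_\e(u_n)\to 0$ and consequently $J'_\e(u_n)\to 0$ in $(\h)^*$. Thus $(u_n)$ is a free $(PS)_c$ sequence for $J_\e$, and Proposition \ref{prop2.1} produces the convergent subsequence.

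The main obstacle is securing the uniform strict negativity of $\langle G'_\e(u),u\rangle$ along $\N_\e$: without it one cannot drive the Lagrange multipliers to zero, and there is no room to use the local behavior of $f$ near $0$ or $\infty$. The estimate rests on the interaction between the algebraic inequality supplied by $(f_4)$ and the Nehari lower bound $\|u\|_\e\geq K$; once that is in hand the passage from the constrained to the unconstrained $(PS)$ condition is essentially formal.
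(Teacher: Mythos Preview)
Your argument is correct and follows the same overall scheme as the paper---Lagrange multipliers on $\N_\e$ plus Proposition~\ref{prop2.1}---but the way you control the multiplier is genuinely different and cleaner. The paper invokes $(f_5)$ to obtain
\[
\langle T'_\e(u_n),u_n\rangle = -2\|u_n\|_\e^2 + 2\int_{\R^3}\bigl[f(|u_n|^2)|u_n|^2 - f'(|u_n|^2)|u_n|^4\bigr]\,dx \leq -2C_\sigma\|u_n\|_{L^\sigma(\R^3)}^\sigma,
\]
which is not uniformly bounded away from zero; it then runs a contradiction argument (if $\langle T'_\e(u_n),u_n\rangle\to 0$ then $u_n\to 0$ in $L^\sigma$, hence in $L^q$ by interpolation, hence $\|u_n\|_\e\to 0$ via the Nehari identity, contradicting Lemma~\ref{LemNeharyE}). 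You instead use only $(f_4)$, i.e.\ $f'(\tau)\tau\geq f(\tau)$, to get directly $\langle G'_\e(u),u\rangle\leq -2\|u\|_\e^2\leq -2K^2$ for every $u\in\N_\e$, which forces $\lambda_n\to 0$ without any case analysis. Your route is more elementary (it dispenses with the quantitative hypothesis $(f_5)$ at this step) and shorter; the paper's route, while more roundabout here, is the one that explains why $(f_5)$ is listed among the assumptions.
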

\begin{proof}
	Let $(u_{n})\subset \N_{\e}$ be a $(PS)_{c}$ sequence of $J_{\e}$ restricted to $\N_{\e}$. Using \cite[Proposition 5.12]{W}, we can find a sequence $(\lambda_{n})\subset \R$ such that	
	\begin{equation}\label{tv18}
	J'_{\e}(u_{n})=\lambda_{n} T'_{\e}(u_{n})+o_{n}(1)
	\end{equation}
	where $	T_{\e}: H^{s}_{\e}\rightarrow \R$ is defined as
	$$
	T_{\e}(u)=\|u\|^{2}_{\e}+\int_{\R^{3}}\phi_{|u|}^{t}|u|^{2}\, dx-\int_{\R^{3}} f(|u|^{2})|u|^{2} dx.
	$$
	In light of $u_{n}\in \mathcal{N}_{\e}$ and $(f_5)$ we can see that
	\begin{align*}
	\langle T'_{\e}(u_{n}),u_{n}\rangle&=2\|u_{n}\|^{2}_{\e}+4 \int_{\R^{3}}\phi_{|u_{n}|}^{t}|u_{n}|^{2}\, dx -2\int_{\R^{3}} f(|u_{n}|^{2})|u_{n}|^{2}dx-2\int_{\R^{3}} f'(|u_{n}|^{2})|u_{n}|^{4}dx \\
	&=-2\|u_{n}\|^{2}_{\e}+ 2\int_{\R^{3}} [f(|u_{n}|^{2})|u_{n}|^{2}- f'(|u_{n}|^{2})|u_{n}|^{4}] dx\leq -2C_{\sigma} \|u_{n}\|_{L^{\sigma}(\R^{3})}^{\sigma}< 0.
	\end{align*}
	Then, up to a subsequence, we may assume that $\langle T'_{\e}(u_{n}),u_{n}\rangle\rightarrow \ell\leq 0$.\\
	If $\ell=0$, then $$o_{n}(1)=|\langle T'_{\e}(u_{n}),u_{n}\rangle|\geq C \|u_{n}\|_{L^{\sigma}(\R^{3})}^{\sigma}$$ so we obtain that $u_{n}\rightarrow 0$ in $L^{\sigma}(\R^{3}, \C)$.
	Since $(u_{n})\subset \N_{\e}$ and $J_{\e}(u_{n})\rightarrow c$  as $n\to\infty$, we can argue as in Lemma \ref{bPS} to see that $(u_{n})$ is bounded in $\h$. Then, by interpolation, we also have $u_{n}\rightarrow 0$ in $L^{q}(\R^{3}, \C)$. Hence, using Lemma \ref{propf}-$(i)$, we have
	$$
	\|u_{n}\|^{2}_{\e}\leq \|u_{n}\|^{2}_{\e}+\int_{\R^{3}} \phi_{|u_{n}|}|u_{n}|^{2}\, dx \leq\int_{\R^{3}} f(|u_{n}|^{2})|u_{n}|^{2}dx=o_{n}(1),
	$$
	which implies that $u_{n}\rightarrow 0$ in $\h$. This is impossible in view of Lemma \ref{LemNeharyE}-$(i)$. Therefore $\ell<0$ and by \eqref{tv18} we deduce that $\lambda_{n}=o_{n}(1)$. 
	From \eqref{tv18}, we have $J'_{\e}(u_{n})=o_{n}(1)$, that is $(u_{n})$ is a $(PS)_{c}$ sequence for $J_{\e}$. Then we can apply Proposition \ref{prop2.1} to obtain the thesis.
\end{proof}

\noindent
Arguing as before we can see that the following result holds.
\begin{cor}\label{cor2.1}
The constrained critical points of the functional $J_{\e}$ on $\N_{\e}$ are critical points of $J_{\e}$ in $\h$.
\end{cor}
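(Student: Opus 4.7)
The plan is to reduce the statement directly to the computation already performed in the proof of Proposition \ref{prop2.2}, invoking the Lagrange multiplier rule rather than a Palais-Smale sequence. Let $u\in \N_{\e}$ be a constrained critical point of $J_{\e}$ on $\N_{\e}$. Since $\N_{\e}$ is defined as the zero level set of
$$
T_{\e}(u)=\|u\|^{2}_{\e}+\int_{\R^{3}}\phi_{|u|}^{t}|u|^{2}\, dx-\int_{\R^{3}} f(|u|^{2})|u|^{2}\, dx,
$$
the Lagrange multiplier theorem furnishes some $\lambda\in \R$ with $J'_{\e}(u)=\lambda\, T'_{\e}(u)$. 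The whole point will then be to show that $\lambda=0$.

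To this end, I would test the identity $J'_{\e}(u)=\lambda\, T'_{\e}(u)$ against $u$ itself. On the one hand, $u\in \N_{\e}$ gives $\langle J'_{\e}(u),u\rangle=0$. On the other hand, a direct computation, exactly as carried out in Proposition \ref{prop2.2}, yields
$$
\langle T'_{\e}(u),u\rangle=-2\|u\|^{2}_{\e}+2\int_{\R^{3}}\bigl[f(|u|^{2})|u|^{2}-f'(|u|^{2})|u|^{4}\bigr]\,dx\leq -2C_{\sigma}\|u\|^{\sigma}_{L^{\sigma}(\R^{3})},
$$
where assumption $(f_5)$ has been used in the last inequality. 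Since $u\in \N_{\e}$ forces $\|u\|_{\e}\geq K>0$ by Lemma \ref{LemNeharyE}-$(i)$, in particular $u\not\equiv 0$ and thus $\|u\|_{L^{\sigma}(\R^{3})}>0$. Combining these pieces we obtain $\langle T'_{\e}(u),u\rangle<0$, and so the scalar equation $0=\lambda\langle T'_{\e}(u),u\rangle$ forces $\lambda=0$. Consequently $J'_{\e}(u)=0$ as an element of the dual of $\h$, i.e.\ $u$ is a free critical point of $J_{\e}$.

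There is no genuine obstacle here; the delicate ingredient was already absorbed into the proof of Proposition \ref{prop2.2}, namely the strict sign of $\langle T'_{\e}(u),u\rangle$, which is precisely where the technical hypothesis $(f_5)$ enters. The only small point to be careful about is that $\sigma\in (4,2^{*}_{s})$ ensures $H^{s}_{\e}\hookrightarrow L^{\sigma}(\R^{3},\C)$ continuously by Lemma \ref{Sembedding}, so $\|u\|_{L^{\sigma}}$ is finite and controls the relevant quantity, while the strict positivity comes from $u\neq 0$.
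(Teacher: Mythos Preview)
Your argument is correct and is exactly the approach the paper intends: the paper gives no separate proof but writes ``Arguing as before,'' meaning one repeats the computation from Proposition~\ref{prop2.2} for a single constrained critical point and uses $\langle T'_{\e}(u),u\rangle<0$ together with the Lagrange multiplier rule to force $\lambda=0$. The only cosmetic remark is that the strict inequality $\langle T'_{\e}(u),u\rangle<0$ also certifies $T'_{\e}(u)\neq 0$, so logically it should be recorded \emph{before} invoking the Lagrange multiplier theorem; this is a matter of ordering, not a gap.
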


\section{An existence result for \eqref{Pe}}
In this section we give a first existence result to \eqref{Pe}. More precisely:
\begin{thm}\label{Exthm}
Assume that \eqref{RV} and $(f_1)$-$(f_5)$ hold. Then, there exists $\e_{0}>0$ such that, for any $\e\in (0, \e_{0})$, problem \eqref{Pe} has a nontrivial solution.
\end{thm}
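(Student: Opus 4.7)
The plan is to apply the mountain pass theorem to $J_{\e}$. By Lemma \ref{MPG} the functional has the mountain pass geometry, so Ekeland's principle furnishes a $(PS)_{c_{\e}}$ sequence at the minimax level $c_{\e}$. By Proposition \ref{prop2.1}, $J_{\e}$ satisfies $(PS)_{c}$ at every $c \in \R$ when $V_{\infty} = \infty$, and at every $c < m_{V_{\infty}}$ when $V_{\infty} < \infty$. The only nontrivial case is $V_{\infty} < \infty$, and the heart of the proof is to show $c_{\e} < m_{V_{\infty}}$ for $\e$ small.

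First, the hypothesis \eqref{RV} together with the characterization $m_{\mu} = \inf_{u \neq 0} \sup_{t \geq 0} I_{\mu}(tu)$ immediately yields $m_{V_{0}} < m_{V_{\infty}}$, since $\mu \mapsto I_{\mu}(tu)$ is strictly increasing in $\mu$ for any fixed $u \not\equiv 0$ and $t > 0$. Hence it is enough to show
$$
\limsup_{\e \to 0} c_{\e} \leq m_{V_{0}}.
$$
Up to translation one may assume $0 \in M$, i.e., $V(0) = V_{0}$. Let $w \in H^{s}(\R^{3}, \R)$ be a positive ground state of $I_{V_{0}}$ furnished by Lemma \ref{FS}; in particular $0 < w(x) \leq C|x|^{-(3+2s)}$ for large $|x|$ by the decay estimate established at the end of Lemma \ref{FS}. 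Pick $\eta \in C_{c}^{\infty}(\R^{3}, [0,1])$ with $\eta \equiv 1$ on $B_{1}$ and $\supp \eta \subset B_{2}$, and define
$$
v_{\e}(x) := \eta(\e x)\, e^{\imath A(0) \cdot x}\, w(x),
$$
which is compactly supported and belongs to $\h$ by Lemma \ref{aux} applied to $\eta(\e \cdot) w \in H^{s}(\R^{3}, \R)$.

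The idea is to compare the energy of $v_{\e}$ with that of $w$. Using the continuity of $A$ at $0$ to dominate the phase factor $e^{\imath(x-y)\cdot A_{\e}((x+y)/2)} - e^{\imath(x-y)\cdot A(0)}$ on $\supp v_{\e} \subset B_{2/\e}$ (which is the region where $A(\e \cdot)$ is close to $A(0)$), combined with dominated convergence in the Gagliardo double integral and the polynomial decay of $w$, one checks that $[v_{\e}]_{A_{\e}}^{2} \to [w]^{2}$. Standard dominated convergence gives $\int V_{\e}|v_{\e}|^{2}\,dx \to V_{0} \int w^{2}\,dx$ and $\int F(|v_{\e}|^{2})\,dx \to \int F(w^{2})\,dx$, and the continuity statements in Lemma \ref{poisson} yield $\int \phi_{|v_{\e}|}^{t}|v_{\e}|^{2}\,dx \to \int \phi_{w}^{t} w^{2}\,dx$. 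Consequently $J_{\e}(t v_{\e}) \to I_{V_{0}}(tw)$ as $\e \to 0$, uniformly for $t$ in compact subsets of $[0,\infty)$. Since $\theta > 4$ together with $(f_{3})$ forces $I_{V_{0}}(tw) \to -\infty$ as $t \to \infty$, the suprema can be confined to a fixed compact interval uniformly in small $\e$, and so
$$
c_{\e} \leq \sup_{t \geq 0} J_{\e}(t v_{\e}) \longrightarrow \sup_{t \geq 0} I_{V_{0}}(tw) = I_{V_{0}}(w) = m_{V_{0}} < m_{V_{\infty}},
$$
so $c_{\e} < m_{V_{\infty}}$ for every $\e$ below some $\e_{0} > 0$.

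Fix $\e \in (0, \e_{0})$. By Proposition \ref{prop2.1} the $(PS)_{c_{\e}}$ sequence admits a strongly convergent subsequence whose limit $u_{\e}$ is a weak solution of \eqref{Pe}; since $c_{\e} \geq \alpha > 0$ by Lemma \ref{MPG}$(i)$ and $J_{\e}$ is continuous, $u_{\e} \not\equiv 0$. The main technical obstacle is the convergence $[v_{\e}]_{A_{\e}}^{2} \to [w]^{2}$, where one must control the magnetic phase-shifted increment inside a fractional Gagliardo integral of an only polynomially decaying function $w$; this is carried out by splitting the double integral into a region where $|x-y|$ is small (and the H\"older/Lipschitz continuity of $A$ together with $A_{\e}(\cdot) \to A(0)$ makes the phase difference negligible) and a region where $|x-y|$ is bounded away from zero (and the polynomial decay of $w$ provides an integrable majorant).
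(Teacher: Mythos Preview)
Your argument is correct and follows the same overall skeleton as the paper (mountain pass geometry, Ekeland, then show $c_{\e}<m_{V_{\infty}}$ for small $\e$ and invoke Proposition~\ref{prop2.1}), but the construction of the test function and the energy comparison are organised differently.

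The paper does \emph{not} aim directly at $\limsup_{\e\to 0}c_{\e}\le m_{V_{0}}$. Instead it fixes an intermediate level $\mu\in(V_{0},V_{\infty})$, takes a ground state $w$ of $(P_{\mu})$, and uses a cutoff $\eta_{r}(x)=\eta(x/r)$ with a \emph{fixed} radius $r$ (chosen large so that $I_{\mu}(t_{r}|w_{r}|)<m_{V_{\infty}}$, where $w_{r}=\eta_{r}w\,e^{\imath A(0)\cdot x}$). With $r$ frozen, only $\e$ moves in the limit $[w_{r}]_{A_{\e}}^{2}\to[\eta_{r}w]^{2}$, and on the fixed support $\supp w_{r}\subset B_{2r}$ one has $V_{\e}\le\mu$ for $\e$ small, so $J_{\e}(\tau t_{r}w_{r})\le I_{\mu}(\tau t_{r}|w_{r}|)$ for every $\tau\ge 0$. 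This two--step decoupling (first pick $r$, then send $\e\to 0$) avoids having the test function depend on $\e$.

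Your route is more direct: you work at level $V_{0}$, let the cutoff scale with $\e$ via $v_{\e}=\eta(\e\,\cdot)w\,e^{\imath A(0)\cdot x}$, and prove the sharper statement $\limsup_{\e\to 0}c_{\e}\le m_{V_{0}}$. The price is that the convergence $[v_{\e}]_{A_{\e}}^{2}\to[w]^{2}$ now involves an $\e$-dependent function; this still goes through, because in the decomposition $[v_{\e}]_{A_{\e}}^{2}=[\eta(\e\cdot)w]^{2}+X_{\e}+2Y_{\e}$ the factor $\eta(\e y)^{2}$ is simply bounded by $1$ and the paper's estimates for $X_{\e}$ (splitting at $|x-y|=\e^{-\beta}$, using H\"older continuity of $A$ and the decay $w(y)\le C|y|^{-(3+2s)}$ to control $\int|y|^{2\alpha}w(y)^{2}\,dy$) apply verbatim, while $[\eta(\e\cdot)w]^{2}\to[w]^{2}$ by standard arguments. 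One small imprecision in your sketch: the polynomial decay of $w$ is needed not only in the ``$|x-y|$ large'' region but also in the ``$|x-y|$ small'' region, to make $\int_{\R^{3}}|y|^{2\alpha}w(y)^{2}\,dy$ finite after estimating $|x+y|^{2\alpha}\le C(|x-y|^{2\alpha}+|y|^{2\alpha})$.

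In short, both arguments are valid; yours yields a stronger conclusion (actually the one needed later in Lemma~\ref{lem4.1}) at the cost of a test function varying with $\e$, while the paper's use of an intermediate $\mu$ and a fixed cutoff cleanly separates the two limiting procedures.
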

\begin{proof}
 Since $J_{\e}$ has a mountain pass geometry (see  Lemma \ref{MPG}), we can apply the Ekeland Variational Principle, to find a $(PS)_{c_{\e}}$ sequence $(u_{n})\subset \h$ for $J_{\e}$.
	
	If $V_{\infty}=\infty$, by Lemma \ref{Sembedding} and Proposition \ref{prop2.1} we deduce that $J_{\e}(u)=c_{\e}$ and $J'_{\e}(u)=0$, where $u\in \h$ is the weak limit of $u_{n}$.\\
	Now, assume that $V_{\infty}<\infty$. 
	Without loss of generality, we may assume that 
	$$
	V(0)=V_{0}=\inf_{x\in \R^{3}} V(x).
	$$
	Fix $\mu\in  (V_{0}, V_{\infty})$. Clearly $m_{V_{0}}<m_{\mu}<m_{V_{\infty}}$. Let $w\in H^{s}(\R^{3}, \R)$ be a positive ground state to autonomous problem \eqref{AP} (which there exists in view of Lemma \ref{FS}) and we recall that $w\in C^{1, \gamma}(\R^{3}, \R)\cap L^{\infty}(\R^{3}, \R)$ and $0<w(x)\leq \frac{C}{|x|^{3+2s}}$ for $|x|>1$. Let $\eta\in C^{\infty}_{c}(\R^{3}, \R)$ be a cut-off function such that $\eta=1$ in $B_{1}(0)$ and $\eta=0$ in $B_{2}^{c}(0)$. Let us define $w_{r}(x):=\eta_{r}(x)w(x) e^{\imath A(0)\cdot x}$, with $\eta_r(x)=\eta(x/r)$ for $r>0$, and we observe that $|w_{r}|=\eta_{r}w$ and $w_{r}\in \h$ in view of Lemma \ref{aux}.
	Take $t_{r}>0$ such that 
	\begin{equation*}
	I_{\mu}(t_{r} |w_{r}|)=\max_{t\geq 0} I_{\mu}(t |w_{r}|)
	\end{equation*}
	Let us prove that there exists $r$ sufficiently large such that $I_{\mu}(t_{r}|w_{r}|)<m_{V_{\infty}}$.\\
	Assume by contradiction that $I_{\mu}(t_{r}|w_{r}|)\geq m_{V_{\infty}}$ for any $r>0$. Using Lemma $5$ in \cite{PP} we can see that $|w_{r}|\rightarrow w$ in $H^{s}(\R^{3}, \R)$ as $r\rightarrow \infty$, and being $w\in \mathcal{M}_{\mu}$, we have $t_{r}\rightarrow 1$ and
	$$
	m_{V_{\infty}}\leq \liminf_{r\rightarrow \infty} I_{\mu}(t_{r}|w_{r}|)=I_{\mu}(w)=m_{\mu}
	$$
	which gives a contradiction because of $m_{V_{\infty}}>m_{\mu}$.
	Then we can find $r>0$ such that
	\begin{align}\label{tv19}
	I_{\mu}(t_{r}|w_{r}|)=\max_{\tau\geq 0} I_{\mu}(\tau (t_{r} |w_{r}|)) \mbox{ and } I_{\mu}(t_{r}|w_{r}|)<m_{V_{\infty}}.
	\end{align}
	Now, we prove the following limit:
	\begin{equation}\label{limwr}
	\lim_{\e\rightarrow 0}[w_{r}]_{A_{\e}}^{2}=[\eta_{r}w]^{2}.
	\end{equation}
	Let us note that
	\begin{align*}
	[w_{r}]_{A_{\e}}^{2}
	&=\iint_{\R^{6}} \frac{|e^{\imath A(0)\cdot x}\eta_{r}(x)w(x)-e^{\imath A_{\e}(\frac{x+y}{2})\cdot (x-y)}e^{\imath A(0)\cdot y} \eta_{r}(y)w(y)|^{2}}{|x-y|^{3+2s}} dx dy \nonumber \\
	&=[\eta_{r} w]^{2}
	+\iint_{\R^{6}} \frac{\eta_{r}^2(y)w^2(y) |e^{\imath [A_{\e}(\frac{x+y}{2})-A(0)]\cdot (x-y)}-1|^{2}}{|x-y|^{3+2s}} dx dy\\
	&\quad+2\Re \iint_{\R^{6}} \frac{(\eta_{r}(x)w(x)-\eta_{r}(y)w(y))\eta_{r}(y)w(y)(1-e^{-\imath [A_{\e}(\frac{x+y}{2})-A(0)]\cdot (x-y)})}{|x-y|^{3+2s}} dx dy \\
	&=: [\eta_{r} w]^{2}+X_{\e}+2Y_{\e}.
	\end{align*}
	Since 
	$|Y_{\e}|\leq [\eta_{r} w] \sqrt{X_{\e}}$, it is enough to show that	$X_{\e}\rightarrow 0$ as $\e\rightarrow 0$ to infer that \eqref{limwr} holds.\\
	For $0<\beta<\alpha/({1+\alpha-s})$, we have
	\begin{equation}\label{Ye}
	\begin{split}
	X_{\e}
	&\leq \int_{\R^{3}} w^{2}(y) dy \int_{|x-y|\geq\e^{-\beta}} \frac{|e^{\imath [A_{\e}(\frac{x+y}{2})-A(0)]\cdot (x-y)}-1|^{2}}{|x-y|^{3+2s}} dx\\
	&+\int_{\R^{3}} w^{2}(y) dy  \int_{|x-y|<\e^{-\beta}} \frac{|e^{\imath [A_{\e}(\frac{x+y}{2})-A(0)]\cdot (x-y)}-1|^{2}}{|x-y|^{3+2s}} dx \\
	&=:X^{1}_{\e}+X^{2}_{\e}.
	\end{split}
	\end{equation}
	Since $|e^{\imath t}-1|^{2}\leq 4$ and recalling that $w\in H^{s}(\R^{3}, \R)$, we can observe that 
	\begin{equation}\label{Ye1}
	X_{\e}^{1}\leq C \int_{\R^{3}} w^{2}(y) dy \int_{\e^{-\beta}}^\infty \rho^{-1-2s} d\rho\leq C \e^{2\beta s} \rightarrow 0.
	\end{equation}
	Regarding $X^{2}_{\e}$, since $|e^{\imath t}-1|^{2}\leq t^{2}$ for all $t\in \R$, $A\in C^{0,\alpha}(\R^3,\R^3)$ for $\alpha\in(0,1]$, and $|x+y|^{2}\leq 2(|x-y|^{2}+4|y|^{2})$, we can obtain
	\begin{equation}\label{Ye2}
	\begin{split}
	X^{2}_{\e}&
	\leq \int_{\R^{3}} w^{2}(y) dy  \int_{|x-y|<\e^{-\beta}} \frac{|A_{\e}\left(\frac{x+y}{2}\right)-A(0)|^{2} }{|x-y|^{3+2s-2}} dx \\
	&\leq C\e^{2\alpha} \int_{\R^{3}} w^{2}(y) dy  \int_{|x-y|<\e^{-\beta}} \frac{|x+y|^{2\alpha} }{|x-y|^{3+2s-2}} dx \\
	&\leq C\e^{2\alpha} \left(\int_{\R^{3}} w^{2}(y) dy  \int_{|x-y|<\e^{-\beta}} \frac{1 }{|x-y|^{3+2s-2-2\alpha}} dx\right.\\
	&\qquad\qquad+ \left. \int_{\R^{3}} |y|^{2\alpha} w^{2}(y) dy  \int_{|x-y|<\e^{-\beta}} \frac{1}{|x-y|^{3+2s-2}} dx\right) \\
	&=: C\e^{2\alpha} (X^{2, 1}_{\e}+X^{2, 2}_{\e}).
	\end{split}
	\end{equation}	
	Therefore
	\begin{equation}\label{Ye21}
	X^{2, 1}_{\e}
	= C  \int_{\R^{3}} w^{2}(y) dy \int_0^{\e^{-\beta}} \rho^{1+2\alpha-2s} d\rho
	\leq C\e^{-2\beta(1+\alpha-s)}.
	\end{equation}
	On the other hand, recalling the polynomial decay estimate on $w$ we infer that
	\begin{equation}\label{Ye22}
	\begin{split}
	 X^{2, 2}_{\e}
	 &\leq C  \int_{\R^{3}} |y|^{2\alpha} w^{2}(y) dy \int_0^{\e^{-\beta}}\rho^{1-2s} d\rho  \\
	&\leq C \e^{-2\beta(1-s)} \left[\int_{B_1(0)}  w^{2}(y) dy + \int_{B_1^c(0)} \frac{1}{|y|^{2(3+2s)-2\alpha}} dy \right]  \\
	&\leq C \e^{-2\beta(1-s)}.
	\end{split}
	\end{equation}
	Taking into account \eqref{Ye}, \eqref{Ye1}, \eqref{Ye2}, \eqref{Ye21} and \eqref{Ye22} we can conclude that $X_{\e}\rightarrow 0$.
	Now, in view of \eqref{RV}, there exists  $\e_{0}>0$ such that 
	\begin{equation}\label{tv20}
	V_{\e} (x)\leq \mu \mbox{ for all } x\in \supp(|w_{r}|), \e\in (0, \e_{0}).
	\end{equation} 
	Therefore, putting together \eqref{tv19} , \eqref{limwr} and \eqref{tv20}, 
	we deduce that
	$$
	\limsup_{\e\rightarrow 0}c_{\e}\leq \limsup_{\e\rightarrow 0}\left[\max_{\tau\geq 0} J_{\e}(\tau t_{r} w_{r})\right]\leq \max_{\tau\geq 0} I_{\mu}(\tau t_{r} |w_{r}|)=I_{\mu}(t_{r}|w_{r}|)<m_{V_{\infty}}
	$$ 
	which implies that $c_{\e}<m_{V_{\infty}}$ for any $\e>0$ sufficiently small.
	Then we can apply Proposition \ref{prop2.1} to deduce the thesis.
\end{proof}

\section{Multiple solutions to \eqref{Pe}}
This section is devoted to the proof of a multiplicity result for \eqref{Pe}. For this purpose, we begin proving the following compactness result.
\begin{prop}\label{prop4.1}
	Let $\e_{n}\rightarrow 0^{+}$ and $(u_{n})\subset \N_{\e_{n}}$ be such that $J_{\e_{n}}(u_{n})\rightarrow m_{V_{0}}$. Then there exists $(\tilde{y}_{n})\subset \R^{3}$ such that the translated sequence 
	\begin{equation*}
	v_{n}(x):=|u_{n}|(x+ \tilde{y}_{n})
	\end{equation*}
	has a subsequence which converges in $H^{s}(\R^{3}, \R)$. Moreover, up to a subsequence, $(y_{n}):=(\e_{n}\tilde{y}_{n})$ is such that $y_{n}\rightarrow y\in M$. 
\end{prop}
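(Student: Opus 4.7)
The plan is to follow the standard concentration–compactness scheme for singularly perturbed problems, the key device being the diamagnetic inequality, which reduces level comparisons from the magnetic problem to a real-valued autonomous one. First, I would show that $(u_n)$ is bounded in $H^s_{\e_n}$. Since $(u_n)\subset\N_{\e_n}$ and $J_{\e_n}(u_n)\to m_{V_0}$, repeating the argument of Lemma \ref{bPS} with $(f_3)$ yields
\[
\tfrac{1}{4}\|u_n\|^2_{\e_n}\leq J_{\e_n}(u_n)-\tfrac{1}{4}\langle J'_{\e_n}(u_n),u_n\rangle=m_{V_0}+o(1).
\]
By Lemma \ref{DI} the moduli $w_n:=|u_n|$ belong to $H^s(\R^3,\R)$, with the crucial comparison $\|w_n\|^2_{V_0}=[w_n]^2+V_0\|w_n\|^2_{L^2}\leq \|u_n\|^2_{\e_n}$, so $(w_n)$ is bounded in $H^s(\R^3,\R)$.

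Next I would rule out vanishing of $(w_n)$. The Nehari identity, the lower bound $\|u_n\|_{\e_n}\geq K$ of Lemma \ref{LemNeharyE}\,$(i)$ and the subcritical estimate in Lemma \ref{propf}\,$(i)$ give $\int_{\R^3}|w_n|^r\,dx\geq C>0$ for some $r\in(2,2^*_s)$. Applying the Lions-type lemma on $H^s(\R^3,\R)$ (counterpart of Lemma \ref{Lions}) produces $(\tilde y_n)\subset\R^3$ and $R,\beta>0$ with $\int_{B_R(\tilde y_n)}w_n^4\,dx\geq\beta$. Setting $v_n(x):=w_n(x+\tilde y_n)$, translation invariance of the $H^s$-norms makes $(v_n)$ bounded, so up to a subsequence $v_n\rightharpoonup v$ in $H^s(\R^3,\R)$, and $v\not\equiv 0$ by local $L^4$-compactness.

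The heart of the proof is to locate $y_n:=\e_n\tilde y_n$. Using Lemma \ref{DI} globally one has $I_{V_0}(v_n)\leq J_{\e_n}(u_n)=m_{V_0}+o(1)$. Suppose, for contradiction, that $|y_n|\to\infty$. When $V_\infty<\infty$, \eqref{RV} forces $V_{\e_n}(\cdot+\tilde y_n)\geq V_\infty+o(1)$ on bounded sets; passing to the limit in the shifted Nehari identity for $v_n$ and applying Fatou yields $\langle I'_{V_\infty}(v),v\rangle\leq 0$, whence by the autonomous version of Lemma \ref{LemNeharyE}\,$(ii)$ there exists $t\in(0,1]$ with $tv\in\mathcal M_{V_\infty}$. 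Then $(f_4)$ gives
\[
m_{V_\infty}\leq I_{V_\infty}(tv)\leq \liminf_{n\to\infty} I_{V_0}(v_n)\leq m_{V_0},
\]
contradicting the strict inequality $m_{V_0}<m_{V_\infty}$ (which follows from $V_0<V_\infty$ and $(f_4)$ by the usual fibering argument with a ground state for $V_\infty$). The case $V_\infty=\infty$ is simpler, since boundedness of $\int V_{\e_n}|u_n|^2\,dx$ prevents $(\tilde y_n)$ from escaping. Hence $(y_n)$ is bounded; up to a subsequence $y_n\to y$, and repeating the same comparison with $V(y)$ in place of $V_\infty$ rules out $V(y)>V_0$. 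Therefore $y\in M$.

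Finally, to upgrade the weak convergence to strong, one passes to the limit in the equation satisfied by $u_n$ (testing against $e^{\imath A(y)\cdot x}\varphi$ with $\varphi\in C^\infty_c(\R^3,\R)$, as in Lemma \ref{aux}) to conclude that $v$ solves $(P_{V_0})$, hence $v\in\mathcal M_{V_0}$ and $I_{V_0}(v)\geq m_{V_0}$. Since the functional $t\mapsto \tfrac{1}{2}f(t)t-F(t)$ is nondecreasing on $[0,\infty)$ by $(f_4)$, Fatou applied to this integrand together with Lemma \ref{DI} gives
\[
m_{V_0}=\tfrac{1}{4}\|v\|^2_{V_0}+\tfrac{1}{2}\int_{\R^3}\bigl(\tfrac{1}{2}f(v^2)v^2-F(v^2)\bigr)dx=I_{V_0}(v),
\]
and all $\liminf$'s become genuine limits, so $\|v_n\|_{V_0}\to\|v\|_{V_0}$. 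Weak convergence together with norm convergence in the Hilbert space $H^s(\R^3,\R)$ yields $v_n\to v$ strongly. The main obstacle is the third step: one cannot directly translate the magnetic problem by the diverging sequence $\tilde y_n$, since the phase $e^{\imath A(\e_n(\frac{x+y}{2}+\tilde y_n))\cdot\e_n(x-y)}$ has no well-behaved limit. The remedy, as in \cite{AD, AFF}, is to carry out \emph{all} energy comparisons on the real-valued moduli $v_n$ via Lemma \ref{DI}, thereby eliminating the magnetic phase from the estimates and reducing the obstruction to a clean comparison between ground-state levels of autonomous real functionals.
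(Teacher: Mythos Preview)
Your proposal has two genuine gaps that stem from reversing the order of the paper's argument.

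First, the step ``passing to the limit in the shifted Nehari identity for $v_n$ and applying Fatou yields $\langle I'_{V_\infty}(v),v\rangle\leq 0$'' does not work. From $u_n\in\N_{\e_n}$ and Lemma \ref{DI} you only obtain the \emph{inequality}
\[
[v_n]^{2}+\int_{\R^{3}}V(\e_n x+y_n)v_n^{2}\,dx+\int_{\R^{3}}\phi^t_{v_n}v_n^{2}\,dx\leq \int_{\R^{3}}f(v_n^{2})v_n^{2}\,dx.
\]
With only weak convergence $v_n\rightharpoonup v$, Fatou controls the left side from below, but gives $\liminf\int f(v_n^2)v_n^2\geq\int f(v^2)v^2$, which is the wrong direction. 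You cannot conclude $\langle I'_{V_\infty}(v),v\rangle\leq 0$ without strong convergence of $(v_n)$ in some $L^r$, $r\in(2,2^*_s)$. The same obstruction kills the chain $I_{V_\infty}(tv)\leq\liminf I_{V_0}(v_n)$: the $F$-term in $I_{V_0}(v_n)$ has the wrong semicontinuity.

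Second, your final step ``passes to the limit in the equation satisfied by $u_n$'' to get $I'_{V_0}(v)=0$. But $u_n$ is merely on the Nehari manifold $\N_{\e_n}$; nothing in the hypotheses says $J'_{\e_n}(u_n)\to 0$, so there is no equation to pass to the limit in.

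The paper's remedy is to reverse your order. Take $t_n>0$ with $t_nv_n\in\mathcal{M}_{V_0}$; then Lemma \ref{DI} and the maximality of $J_{\e_n}$ along the ray through $u_n$ give
\[
m_{V_0}\leq I_{V_0}(t_nv_n)\leq \max_{t\geq 0}J_{\e_n}(tu_n)=J_{\e_n}(u_n)=m_{V_0}+o(1),
\]
so $(t_nv_n)$ is a minimizing sequence on $\mathcal{M}_{V_0}$ and Lemma \ref{FS} yields strong convergence of $t_nv_n$ (hence of $v_n$, once $t_n\to t_0>0$ is checked). Only \emph{after} strong convergence is in hand does the paper run the Fatou/level-comparison argument to bound $(y_n)$ and force $y\in M$; at that point the $F$-integrals genuinely converge and the chain \eqref{tv22} closes. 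The projection onto $\mathcal{M}_{V_0}$ is the missing device in your outline.
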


\begin{proof}
	Since $\langle J'_{\e_{n}}(u_{n}), u_{n} \rangle=0$ and $J_{\e_{n}}(u_{n})\rightarrow m_{V_{0}}$, we can argue as in Lemma \ref{bPS} to see that $\|u_{n}\|_{\e_{n}}\leq C$ for all $n\in\mathbb{N}$.
	Let us note $\|u_{n}\|_{\e_{n}}\nrightarrow 0$ otherwise $J_{\e_{n}}(u_{n})\rightarrow 0$ which is impossible since $m_{V_{0}}>0$.
	Therefore, as in the proof of Lemma \ref{compactness}, we can find a sequence $(\tilde{y}_{n})\subset \R^{3}$ and constants $R, \beta>0$ such that
	\begin{equation}\label{tv21}
	\liminf_{n\rightarrow \infty}\int_{B_{R}(\tilde{y}_{n})} |u_{n}|^{4} dx\geq \beta.
	\end{equation}
	Let us define 
	\begin{equation*}
	v_{n}(x):=|u_{n}|(x+ \tilde{y}_{n}). 
	\end{equation*}
	Applying Lemma \ref{DI} we can see that $(|u_{n}|)$ is bounded in $H^{s}(\R^{3}, \R)$ and, using  \eqref{tv21}, we may suppose that $v_{n}\rightharpoonup v$ in $H^{s}(\R^{3}, \R)$ for some $v\neq 0$.\\
	Let $(t_{n})\subset (0, +\infty)$ be such that $w_{n}=t_{n} v_{n}\in \mathcal{M}_{V_{0}}$, and set $y_{n}:=\e_{n}\tilde{y}_{n}$.  Taking into account
	Lemma \ref{DI} and Lemma \ref{poisson} and that $u_{n}\in \mathcal{N}_{\e_{n}}$, we can see that
	\begin{align*}
	m_{V_{0}}\leq I_{V_{0}}(w_{n})\leq \max_{t\geq 0} J_{\e_{n}}(t u_{n})=J_{\e_{n}}(u_{n})=m_{V_{0}}+ o_{n}(1), 
	\end{align*}
	which yields $I_{V_{0}}(w_{n})\rightarrow m_{V_{0}}$. \\
	Now, using the fact that $(v_{n})$ and $(w_{n})$
	are bounded in $H^{s}(\R^{3}, \R)$ and $v_{n}\nrightarrow 0$ in $H^{s}(\R^{3}, \R)$, we can deduce that $(t_{n})$ is bounded. Therefore, up to a subsequence, we may assume that $t_{n}\rightarrow t_{0}\geq 0$.
	Let us show that $t_{0}>0$.
	Otherwise, if $t_{0}=0$, from the boundedness of $(v_{n})$, we get $w_{n}= t_{n}v_{n} \rightarrow 0$ in $H^{s}(\R^{3}, \R)$, that is $I_{V_{0}}(w_{n})\rightarrow 0$ which contradicts $m_{V_{0}}>0$.
	Thus, up to a subsequence, we may assume that $w_{n}\rightharpoonup w:= t_{0} v\neq 0$ in $H^{s}(\R^{3}, \R)$. 
	From Lemma \ref{FS}, we can deduce that $w_{n} \rightarrow w$ in $H^{s}(\R^{3}, \R)$, which gives $v_{n}\rightarrow v$ in $H^{s}(\R^{3}, \R)$. This concludes the first part of the proposition.\\
	Now, we aim to show that $(y_{n})$ has a bounded subsequence. 
	Assume by contradiction that there exists a subsequence, still denoted by $(y_{n})$, such that $|y_{n}|\rightarrow +\infty$. 
	Firstly, we consider the case $V_{\infty}=\infty$. 
	By Lemma \ref{DI}, we can note that
	$$
	\int_{\R^{3}} V(\e_{n}x+y_{n})|v_{n}|^{2} dx\leq [|v_{n}|]^{2}+\int_{\R^{3}} V(\e_{n}x+y_{n})|v_{n}|^{2} dx+\int_{\R^{3}} \phi_{|v_{n}|}^{t}|v_{n}|^{2}\, dx
	\leq\int_{\R^{3}} f(|v_{n}|^{2})|v_{n}|^{2}\,dx
	$$
	which together with Fatou's Lemma and \eqref{RV} implies that
	$$
	\infty=\liminf_{n\rightarrow \infty} \int_{\R^{3}} f(|v_{n}|^{2})|v_{n}|^{2}\,dx
	$$
	and this is impossible since $(f(|v_{n}|^{2})|v_{n}|^{2})$ is bounded in $L^{1}(\R^{3}, \R)$.
	
	Now, we assume that $V_{\infty}<\infty$. 
	Taking into account $w_{n}\rightarrow w$ in $H^{s}(\R^{3}, \R)$, $V_{0}<V_{\infty}$, Lemma \ref{DI} and Lemma \ref{poisson}-$(3)$, we can see that
	\begin{equation}\label{tv22}
	\begin{split}
	m_{V_{0}}&= I_{V_{0}}(w) < I_{V_{\infty}} (w)\\
	&\leq \liminf_{n\rightarrow \infty} \left[\frac{1}{2} [w_{n}]^{2}+\frac{1}{2} \int_{\R^{3}} V(\e_{n} x+y_{n})|w_{n}|^{2} dx+\frac{1}{4}\int_{\R^{3}} \phi^{t}_{|w_{n}|}|w_{n}|^{2}\, dx-\frac{1}{2}\int_{\R^{3}} F(|w_{n}|^{2}) dx\right] \\
	&=\liminf_{n\rightarrow \infty} \left[\frac{t^{2}_{n}}{2} [|u_{n}|]^{2}+\frac{t^{2}_{n}}{2} \int_{\R^{3}} V(\e_{n} z)|u_{n}|^{2} dx+\frac{t_{n}^{4}}{4}\int_{\R^{3}} \phi^{t}_{|u_{n}|}|u_{n}|^{2}\, dx-\frac{1}{2}\int_{\R^{3}} F(t_{n}^{2} |u_{n}|^{2}) dx\right] \\
	&\leq \liminf_{n\rightarrow \infty} J_{\e_{n}}(t_{n}u_{n}) \leq \liminf_{n\rightarrow \infty} J_{\e_{n}} (u_{n})=m_{V_{0}}
	\end{split}
	\end{equation}
	which gives a contradiction. \\
	Therefore, $(y_{n})$ is bounded and, up to a subsequence, we may assume that $y_{n}\rightarrow y$. If $y\notin M$, then $V_{0}<V(y)$ and we can argue as in \eqref{tv22} to deduce a contradiction. Thus $y\in M$ and this ends the proof of proposition.	
\end{proof}

Let $\delta>0$ be fixed and $\omega\in H^s(\R^3, \R)$ be a ground state solution of problem \eqref{AP} for $\mu=V_0$ given by Lemma \ref{FS}.
Let $\psi\in C^{\infty}(\R^{+}, [0, 1])$ be a nonincreasing function such that $\psi=1$ in $[0, \delta/2]$ and $\psi=0$ in $[\delta, \infty)$. 

For any $y\in M$, we define
$$
\Psi_{\e, y}(x):=\psi(|\e x-y|) \omega\left(\frac{\e x-y}{\e}\right)e^{\imath \tau_{y}(\frac{\e x-y}{\e})}
$$
where $M$ is defined in \eqref{defM} and $\tau_{y}(x):=\sum_{j=1}^{3} A_{j}(y)x_{j}$, and
let $t_{\e}>0$ be the unique number such that
$$
J_{\e}(t_{\e}\Psi_{\e, y})=\max_{t\geq 0} J_{\e}(t_{\e}\Psi_{\e, y}).
$$
Let us introduce the map $\Phi_{\e}:M\rightarrow \N_{\e}$ by setting $\Phi_{\e}(y)=t_{\e} \Psi_{\e, y}$. By construction, $\Phi_{\e}(y)$ has compact support for any $y\in M$.
\begin{lem}\label{lem4.1}
	The functional $\Phi_{\e}$ satisfies the following limit
	\begin{equation*}
	\lim_{\e\rightarrow 0} J_{\e}(\Phi_{\e}(y))=m_{V_{0}} \mbox{ uniformly in } y\in M.
	\end{equation*}
\end{lem}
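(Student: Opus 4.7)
The guiding idea is to compare $J_{\e}(t_{\e}\Psi_{\e,y})$ with $I_{V_{0}}(\omega)=m_{V_{0}}$ via the change of variables $z=x-y/\e$, which translates the concentration point to the origin. Under this substitution the modulus becomes $|\Psi_{\e,y}(z+y/\e)|=\psi(\e|z|)\,\omega(z)=:W_{\e}(z)$, and by dominated convergence (using $\omega\in H^{s}(\R^{3},\R)$ and $0\le \psi\le 1$, $\psi(0)=1$) we have $W_{\e}\to \omega$ strongly in $H^{s}(\R^{3},\R)$ and pointwise a.e., uniformly in $y\in M$. The set $M$ is compact thanks to \eqref{RV}, so all the estimates below will be uniform on $M$.

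Next, I would check that each term in $J_{\e}$ passes to the corresponding term of $I_{V_{0}}$. For the potential part,
\[
\int_{\R^{3}}V_{\e}(x)|\Psi_{\e,y}|^{2}\,dx=\int_{\R^{3}}V(\e z+y)W_{\e}^{2}(z)\,dz\longrightarrow V(y)\|\omega\|_{L^{2}}^{2}=V_{0}\|\omega\|_{L^{2}}^{2},
\]
by continuity of $V$ and $y\in M$. For the magnetic Gagliardo seminorm I would repeat the splitting argument of Theorem \ref{Exthm} (the quantities $X_{\e}^{1},X_{\e}^{2,1},X_{\e}^{2,2}$ there), factoring out the phase $e^{\imath A(y)\cdot z}$ and exploiting the H\"older continuity of $A$ to control $|A(\e(z+z')/2+y)-A(y)|$; this yields $[\Psi_{\e,y}]_{A_{\e}}^{2}\to [\omega]^{2}$. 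The Poisson term converges by the same change of variables together with Lemma \ref{poisson} and dominated convergence, giving
\[
\int_{\R^{3}}\phi^{t}_{|\Psi_{\e,y}|}|\Psi_{\e,y}|^{2}\,dx\longrightarrow \int_{\R^{3}}\phi^{t}_{\omega}\omega^{2}\,dx.
\]

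The next step is to prove that $t_{\e}\to 1$. From the Nehari identity $\langle J'_{\e}(t_{\e}\Psi_{\e,y}),t_{\e}\Psi_{\e,y}\rangle=0$, dividing by $t_{\e}^{2}$ gives
\[
\|\Psi_{\e,y}\|_{\e}^{2}+t_{\e}^{2}\!\int_{\R^{3}}\phi^{t}_{|\Psi_{\e,y}|}|\Psi_{\e,y}|^{2}\,dx
=\int_{\R^{3}}\frac{f(t_{\e}^{2}|\Psi_{\e,y}|^{2})}{t_{\e}^{2}|\Psi_{\e,y}|^{2}}|\Psi_{\e,y}|^{4}\,dx.
\]
The growth condition $(f_{1})$-$(f_{2})$ together with the uniform bound $\|\Psi_{\e,y}\|_{\e}\le C$ forces $t_{\e}$ to stay bounded away from $0$ and $\infty$. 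Extracting a subsequence $t_{\e}\to t_{0}>0$ and using monotone/dominated convergence together with the convergences above yields
\[
\|\omega\|_{V_{0}}^{2}+t_{0}^{2}\!\int_{\R^{3}}\phi^{t}_{\omega}\omega^{2}\,dx
=\int_{\R^{3}}\frac{f(t_{0}^{2}\omega^{2})}{t_{0}^{2}\omega^{2}}\omega^{4}\,dx.
\]
Since $\omega\in\mathcal{M}_{V_{0}}$ already satisfies the same identity with $t_{0}=1$, the uniqueness of the projection on the autonomous Nehari manifold (analogue of Lemma \ref{LemNeharyE}$(ii)$, via $(f_{4})$) gives $t_{0}=1$. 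Plugging $t_{\e}\to 1$ and all the term-by-term limits into $J_{\e}(t_{\e}\Psi_{\e,y})$ yields $J_{\e}(\Phi_{\e}(y))\to I_{V_{0}}(\omega)=m_{V_{0}}$.

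Uniformity in $y\in M$ is obtained by a standard subsequence argument: if it failed, one could extract $\e_{n}\to 0$ and $y_{n}\in M$ with $|J_{\e_{n}}(\Phi_{\e_{n}}(y_{n}))-m_{V_{0}}|\ge c_{0}>0$, and by compactness of $M$ further reduce to $y_{n}\to y_{*}\in M$; the pointwise argument above then produces a contradiction. The main technical hurdle is the convergence $[\Psi_{\e,y}]_{A_{\e}}^{2}\to [\omega]^{2}$, because the fractional magnetic seminorm mixes two scales (the ``Gagliardo'' scale of $\omega$ and the scale $\e$ at which $A$ is averaged); this is handled exactly as in the $X_{\e}$ estimate of Theorem \ref{Exthm} using $A\in C^{0,\alpha}$ and the polynomial decay of $\omega$ recalled in Lemma \ref{FS}.
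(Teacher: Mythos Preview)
Your plan is correct and mirrors the paper's proof: argue by contradiction with sequences $\e_n\to 0$, $y_n\in M$, change variables $z=x-y_n/\e_n$, obtain the convergence $\|\Psi_{\e_n,y_n}\|_{\e_n}^{2}\to\|\omega\|_{V_0}^{2}$ (the paper quotes \cite[Lemma~4.1]{AD}, which is exactly the $X_\e$ computation of Theorem~\ref{Exthm} you intend to repeat), pass to the limit in the Nehari identity, and conclude $t_0=1$ via the uniqueness in Lemma~\ref{LemNeharyE}$(ii)$. One small correction: $(f_1)$--$(f_2)$ alone only give the lower bound $t_\e\not\to 0$; to exclude $t_\e\to\infty$ you need $(f_3)$ or $(f_4)$ (so that $f(t)/t\to\infty$, forcing the right-hand side of the Nehari identity divided by $t_\e^{4}$ to blow up while the left-hand side stays bounded), exactly as the paper does.
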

\begin{proof}
	Assume by contradiction that there there exist $\kappa>0$, $(y_{n})\subset M$ and $\e_{n}\rightarrow 0$ such that 
	\begin{equation*}
	|J_{\e_{n}}(\Phi_{\e_{n}}(y_{n}))-m_{V_{0}}|\geq \kappa.
	\end{equation*}
	Since $\langle J'_{\e_{n}}(\Phi_{\e_{n}}(y_{n})), \Phi_{\e_{n}}(y_{n})\rangle=0$ and using the change of variable $z=(\e_{n}x-y_{n})/{\e_{n}}$, and that, if $z\in B_{\delta/\e_{n}}(0)$ then $\e_{n} z+y_{n}\in B_{\delta}(y_{n})\subset M_{\delta}$, we can see that from
	\begin{align}\label{DPsi}
	&\|\Psi_{\e_{n}, y_{n}}\|^{2}_{\e_{n}}+t_{\e_{n}}^{2}\int_{\R^{3}} \phi^{t}_{|\Psi_{\e_{n}},y_{n}|}|\Psi_{\e_{n},y_{n}}|^{2}\, dx\nonumber\\
	&=\int_{\R^{3}} f(|t_{\e_{n}} \psi(|\e_{n}z|) \omega(z)|^{2}) |\psi(|\e_{n}z|) \omega(z)|^{2}  dz,
	\end{align}
	we have
	\begin{equation*}
	\begin{split}
	\frac{1}{t_{\e_{n}}^{2}}\|\Psi_{\e_{n}, y_{n}}\|^{2}_{\e_{n}}+\int_{\R^{3}} \phi^{t}_{|\Psi_{\e_{n}},y_{n}|}|\Psi_{\e_{n},y_{n}}|^{2}\, dx
	&=\frac{1}{t_{\e_{n}}^{2}}\int_{\R^{3}} f(|t_{\e_{n}}\Psi_{\e_{n},y_{n}}|^{2}) |\Psi_{\e_{n}, y_{n}}|^{2} dx\\
	&=\frac{1}{t_{\e_{n}}^{2}}\int_{\R^{3}} f(|t_{\e_{n}} \psi(|\e_{n}z|) \omega(z)|^{2}) |\psi(|\e_{n}z|) \omega(z)|^{2}  dz\\
	&\geq \frac{1}{t_{\e_{n}}^{2}}\int_{B_{{\delta}/{2}}(0)} f(|t_{\e_{n}} \omega(z)|^{2}) \omega^{2}(z)  dz\\
	&\geq \frac{f(|t_{n}\alpha|^{2})}{|t_{n}\alpha|^{2}}  \int_{B_{{\delta}/{2}}(0)}\omega^{4}(z)  dz
	\end{split}
	\end{equation*}
	for all $n\geq n_{0}$, with  $n_{0}\in \mathbb{N}$ such that $B_{\frac{\delta}{2}}(0)\subset B_{\frac{\delta}{2\e_{n}}}(0)$ and $\alpha=\min\{\omega(z): |z|\leq \frac{\delta}{2}\}$. In the last passage we used $(f_4)$.
	Now, using Lemma $4.1$ in \cite{AD}, we can note that as $n\rightarrow \infty$
	\begin{equation}\label{ADlem}
	\|\Psi_{\e_{n}, y_{n}}\|_{\e_{n}}^2 \to \|\omega\|_{V_0}^{2}.
	\end{equation}
	Then, if $t_{\e_{n}}\rightarrow \infty$, in view of $(f_3)$ and \eqref{ADlem} we get
	$$
	\int_{\R^{3}} \phi^{t}_{|\Psi_{\e_{n}},y_{n}|}|\Psi_{\e_{n},y_{n}}|^{2}\, dx\rightarrow \infty
	$$
	which is a contradiction because $|\Psi_{\e_{n},y_{n}}|=\psi(|\e x-y|) \omega\left(\frac{\e x-y}{\e}\right)$ converges strongly to $\omega$ in $H^{s}(\R^{3}, \R)$ (see \cite[Lemma 5]{PP}), and using property $(6)$ in \cite[Lemma $2.3$]{teng}, we can see that
	$$
	\int_{\R^{3}} \phi^{t}_{|\Psi_{\e_{n}},y_{n}|}|\Psi_{\e_{n},y_{n}}|^{2}\, dx\rightarrow \int_{\R^{3}} \phi^{t}_{|\omega|} |\omega|^{2}\, dx.
	$$
	Therefore, up to a subsequence, we may assume that $t_{\e_{n}}\rightarrow t_{0}\geq 0$.
	Putting together \eqref{DPsi}, $(i)$ in Lemma \ref{LemNeharyE}, \eqref{ADlem} and $(f_1)$-$(f_2)$ we can see that $t_{0}>0$.\\
	Now, taking the limit as $n\rightarrow \infty$ in \eqref{DPsi}, we get
	$$
	[\omega]^{2}+\int_{\R^{3}} V_{0}|\omega|^{2} dx+t_{0}^{2}\int_{\R^{3}}\phi^{t}_{|\omega|}|\omega|^{2}\, dx=\int_{\R^{3}} f(|t_{0}\omega|^{2}) \omega^{2},
	$$ 
	that is $t_{0}\omega\in \mathcal{M}_{V_0}$. On the other hand, recalling that $\omega\in  \mathcal{M}_{V_0}$, we can see that
	$$
	\left(\frac{1}{t_{0}^{2}}-1 \right)\|\omega\|^{2}_{V_{0}}=\int_{\R^{3}} \left(\frac{f(|t_{0}\omega|^{2})}{|t_{0}\omega|^{2}}-\frac{f(|\omega|^{2})}{|\omega|^{2}}\right)|\omega|^{4}\, dx.
	$$ 
	Using $(f_4)$, we get $t_{0}=1$.
	Thus, invoking the Dominated Convergence Theorem and $t_{\e_{n}}\rightarrow 1$, we can see that
	\begin{equation*}
	\lim_{n\rightarrow \infty}\int_{\R^{3}} F(|\Phi_{\e_{n}}(y_{n})|^{2})=\int_{\R^{3}} F(\omega^{2}).
	\end{equation*}
	and than we can deduce that
	$$
	\lim_{n\rightarrow \infty} J_{\e_{n}}(\Phi_{\e_{n}}(y_{n}))=I_{V_{0}}(\omega)=m_{V_{0}}
	$$
	which is impossible.
\end{proof}

Now, for any $\delta>0$,  let $\rho=\rho(\delta)>0$ be such that $M_{\delta}\subset B_{\rho}(0)$ and we define $\Upsilon: \R^{3}\rightarrow \R^{3}$ by setting
\begin{equation*}
\Upsilon(x)=
\left\{
\begin{array}{ll}
x &\mbox{ if } |x|<\rho \\
\rho x/{|x|} &\mbox{ if } |x|\geq \rho.
\end{array}
\right.
\end{equation*}
Finally, we consider the barycenter map $\beta_{\e}: \N_{\e}\rightarrow \R^{3}$ given by
$$
\beta_{\e}(u):=\frac{\displaystyle\int_{\R^{3}} \Upsilon(\e x) |u(x)|^{4} dx}{\displaystyle\int_{\R^{3}} |u(x)|^{4} dx}.
$$
\begin{lem}\label{lem4.2}
	The function $\Phi_{\e}$ verifies the following limit
	\begin{equation*}
	\lim_{\e \rightarrow 0} \beta_{\e}(\Phi_{\e}(y))=y \mbox{ uniformly in } y\in M.
	\end{equation*}
	\end{lem}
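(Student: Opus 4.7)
The plan is to argue by contradiction in the standard style used throughout the paper. Suppose the claim fails: there exist $\delta_{0}>0$, sequences $\e_{n}\to 0$ and $y_{n}\in M$ such that
$$
|\beta_{\e_{n}}(\Phi_{\e_{n}}(y_{n}))-y_{n}|\geq \delta_{0} \qquad \text{for all } n.
$$
Since the factor $t_{\e}^{4}$ cancels in the ratio defining $\beta_{\e}$, and $|\Psi_{\e,y}(x)|=\psi(|\e x-y|)\omega\!\left(\frac{\e x-y}{\e}\right)$ (the magnetic phase has modulus one), the barycenter only sees the real profile. Performing the change of variable $z=\frac{\e_{n}x-y_{n}}{\e_{n}}$ (Jacobian $1$) would give
$$
\beta_{\e_{n}}(\Phi_{\e_{n}}(y_{n}))\;=\;y_{n}+\frac{\displaystyle\int_{\R^{3}}\bigl[\Upsilon(\e_{n}z+y_{n})-y_{n}\bigr]\bigl[\psi(|\e_{n}z|)\omega(z)\bigr]^{4}\,dz}{\displaystyle\int_{\R^{3}}\bigl[\psi(|\e_{n}z|)\omega(z)\bigr]^{4}\,dz}.
$$

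The next step is to control the two integrals separately. For the denominator, since $\psi(|\e_{n}z|)\to \psi(0)=1$ pointwise and $\psi\in[0,1]$, dominated convergence (with dominant $\omega^{4}\in L^{1}$, which is available because $\omega\in H^{s}(\R^{3})\cap L^{\infty}(\R^{3})$ from Lemma \ref{FS}) yields the denominator $\to \|\omega\|_{L^{4}}^{4}>0$; crucially this limit is independent of $y_{n}$. For the numerator, I would split into a compact part and a tail: since $M\subset M_{\delta}\subset B_{\rho}(0)$, for any fixed $R>0$ and $\e_{n}$ small enough (so that $\e_{n}R<\rho-\sup_{M}|y|$) the point $\e_{n}z+y_{n}$ lies in $B_{\rho}(0)$ when $|z|\leq R$, hence $\Upsilon(\e_{n}z+y_{n})=\e_{n}z+y_{n}$ and $|\Upsilon(\e_{n}z+y_{n})-y_{n}|=\e_{n}|z|\leq \e_{n}R\to 0$; on $|z|>R$ we only have the uniform bound $|\Upsilon(\e_{n}z+y_{n})-y_{n}|\leq 2\rho$. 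Therefore
$$
\left|\int_{\R^{3}}\bigl[\Upsilon(\e_{n}z+y_{n})-y_{n}\bigr]\bigl[\psi(|\e_{n}z|)\omega(z)\bigr]^{4}\,dz\right|\;\leq\; \e_{n}R\,\|\omega\|_{L^{4}}^{4}+2\rho\int_{|z|>R}\omega^{4}(z)\,dz.
$$
Given any $\eta>0$, first choose $R$ so large that $2\rho\int_{|z|>R}\omega^{4}<\eta/2$ (possible since $\omega\in L^{4}$), then $\e_{n}$ small enough so the first term is $<\eta/2$. These choices depend on $\eta$ but not on $y_{n}\in M$, so the numerator tends to $0$ uniformly in $y_{n}$.

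Combining the two, $\beta_{\e_{n}}(\Phi_{\e_{n}}(y_{n}))-y_{n}\to 0$, contradicting the assumption and proving the lemma. No serious obstacle is expected: the argument relies only on the decay of $\omega$ (in fact $\omega\in L^{4}(\R^{3})$ suffices, guaranteed by Lemma \ref{FS}), the boundedness of $M$ built into the definition of $\Upsilon$, and the cancellation of $t_{\e}$ and of the magnetic phase $e^{\imath \tau_{y}(\cdot)}$ in $|{\Phi_{\e}(y)}|^{4}$. The only point to state cleanly is the uniformity: by keeping $y_{n}$ inside the bounded set $M_{\delta}\subset B_{\rho}(0)$, the tail estimate depends only on $\omega$ and $\rho$, never on $y_{n}$.
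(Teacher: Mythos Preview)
Your argument is correct and follows essentially the same route as the paper: set up the contradiction, cancel $t_{\e}^{4}$ and the magnetic phase, perform the change of variable $z=(\e_{n}x-y_{n})/\e_{n}$, and show the resulting quotient tends to zero. The paper dispatches the last step in one line by invoking the Dominated Convergence Theorem (implicitly passing to a subsequence of $(y_{n})$ via compactness of $M$), whereas you make the uniformity explicit with the $|z|\leq R$ / $|z|>R$ splitting; this is a minor expository difference, not a different method.
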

	\begin{proof}
Assume by contradiction that there exist $\kappa>0$, $(y_{n})\subset M$ and $\e_{n}\rightarrow 0$ such that 
	\begin{equation}\label{4.4}
	|\beta_{\e_{n}}(\Phi_{\e_{n}}(y_{n}))-y_{n}|\geq \kappa.
	\end{equation}
	Set $z= ({\e_{n} x-y_{n}})/{\e_{n}}$, and we have 
	$$
	\beta_{\e_{n}}(\Psi_{\e_{n}}(y_{n}))=y_{n}+\frac{\int_{\R^{3}}[\Upsilon(\e_{n}z+y_{n})-y_{n}] |\psi(|\e_{n}z|)|^{4} |\omega(z)|^{4} \, dz}{\int_{\R^{3}} |\psi(|\e_{n}z|)|^{2} |\omega(z)|^{4}\, dz}.
	$$
	Since $(y_{n})\subset M\subset M_\delta \subset B_{\rho}(0)$, it follows from the Dominated Convergence Theorem that 
	$$
	|\beta_{\e_{n}}(\Phi_{\e_{n}}(y_{n}))-y_{n}|=o_{n}(1)
	$$
	which is an absurd in view of (\ref{4.4}).
\end{proof}

\noindent
At this point, we introduce a subset $\widetilde{\N}_{\e}$ of $\N_{\e}$ by setting 
$$
\widetilde{\N}_{\e}=\{u\in \N_{\e}: J_{\e}(u)\leq m_{V_{0}}+h(\e)\},
$$
where $h:\R_{+}\rightarrow \R_{+}$ is such that $h(\varsigma)\rightarrow 0$ as $\varsigma\rightarrow 0$.\\
Fixed $y\in M$, we conclude from Lemma \ref{lem4.1} that $h(\varsigma)=|J_{\varsigma}(\Phi_{\varsigma}(y))-m_{V_{0}}|\rightarrow 0$ as $\varsigma \rightarrow 0$. Hence $\Phi_{\e}(y)\in \widetilde{\N}_{\e}$, and $\widetilde{\N}_{\e}\neq \emptyset$ for any $\e>0$.
Moreover, we have the following interesting relation between $\widetilde{\N}_{\e}$ and the barycenter map.
\begin{lem}\label{lem4.4}
For any $\delta>0$, it holds
	$$
	\lim_{\e \rightarrow 0} \sup_{u\in \widetilde{\mathcal{N}}_{\e}} \operatorname{dist}(\beta_{\e}(u), M_{\delta})=0.
	$$
\end{lem}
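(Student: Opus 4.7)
The plan is to argue by contradiction. Suppose the conclusion fails, so that there exist $\delta>0$, $\kappa>0$, a sequence $\e_n\to 0^+$, and $u_n\in \widetilde{\N}_{\e_n}$ with $\operatorname{dist}(\beta_{\e_n}(u_n),M_\delta)\geq\kappa$ for every $n$. The first order of business is to upgrade the upper bound $J_{\e_n}(u_n)\leq m_{V_0}+h(\e_n)$ to the full convergence $J_{\e_n}(u_n)\to m_{V_0}$, so that Proposition~\ref{prop4.1} can be applied. The upper side is automatic from the definition of $\widetilde{\N}_{\e_n}$; for the lower side I would project $|u_n|$ onto $\mathcal{M}_{V_0}$, i.e.\ pick $t_n>0$ with $t_n|u_n|\in\mathcal{M}_{V_0}$. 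Using the fractional diamagnetic inequality (Lemma~\ref{DI}) and $V_{\e_n}\geq V_0$ one checks
$$
m_{V_0}\leq I_{V_0}(t_n|u_n|)\leq J_{\e_n}(t_n u_n)\leq \max_{t\geq 0}J_{\e_n}(tu_n)=J_{\e_n}(u_n),
$$
where the last equality uses $u_n\in\N_{\e_n}$ together with Lemma~\ref{LemNeharyE}-$(ii)$. This pins down $J_{\e_n}(u_n)\to m_{V_0}$.

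Next I invoke Proposition~\ref{prop4.1} to produce $(\tilde y_n)\subset\R^3$ such that $v_n(x):=|u_n|(x+\tilde y_n)$ converges strongly in $H^s(\R^3,\R)$ to some $v\neq 0$, and such that $y_n:=\e_n\tilde y_n\to y\in M$. Because $s>3/4$ one has $4\in(2,2^*_s)$, so the embedding gives $v_n\to v$ in $L^4(\R^3,\R)$, and hence $|v_n|^4\to|v|^4$ in $L^1(\R^3,\R)$.

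Performing the change of variables $z=x-\tilde y_n$ in the definition of $\beta_{\e_n}$ yields
$$
\beta_{\e_n}(u_n)-y_n
=\frac{\displaystyle\int_{\R^3}[\Upsilon(\e_n z+y_n)-y_n]\,|v_n(z)|^4\,dz}{\displaystyle\int_{\R^3}|v_n(z)|^4\,dz}.
$$
Since $M_\delta\subset B_\rho(0)$ is compact and $y\in M\subset M_\delta$ lies in the interior of $B_\rho(0)$, for each fixed $z$ we have $\e_n z+y_n\to y$ with $|y|<\rho$, so $\Upsilon(\e_n z+y_n)\to\Upsilon(y)=y$. The integrands are uniformly bounded by $2\rho|v_n|^4$, and the $L^1$-convergence of $|v_n|^4$ lets us apply (a generalized) dominated convergence to conclude that the right-hand side tends to $0$. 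Therefore $\beta_{\e_n}(u_n)\to y\in M\subset M_\delta$, contradicting $\operatorname{dist}(\beta_{\e_n}(u_n),M_\delta)\geq\kappa$.

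The main obstacle is the first step, establishing $J_{\e_n}(u_n)\to m_{V_0}$; the subtlety is that elements of $\widetilde{\N}_{\e_n}$ only satisfy a one-sided bound, so one must construct an auxiliary competitor on $\mathcal{M}_{V_0}$ via the $t_n$-projection and combine the diamagnetic inequality with the maximality of $J_{\e_n}$ along the ray $\{tu_n\}$ to recover the matching lower bound. Once this is in hand, the remainder of the argument is a routine barycenter/dominated-convergence computation.
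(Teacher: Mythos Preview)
Your proposal is correct and follows essentially the same approach as the paper: both arguments establish $J_{\e_n}(u_n)\to m_{V_0}$ by sandwiching via the diamagnetic inequality and the Nehari maximality $J_{\e_n}(u_n)=\max_{t\geq 0}J_{\e_n}(tu_n)$, then apply Proposition~\ref{prop4.1} and finish with the same change of variables and dominated convergence in the barycenter formula. The only difference is cosmetic---the paper picks a near-supremum sequence directly rather than arguing by contradiction---and your handling of the limit (explicit $L^4$-convergence of $v_n$ and generalized dominated convergence with moving majorants $2\rho|v_n|^4$) is in fact slightly more detailed than the paper's.
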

\begin{proof}
	Let $\e_{n}\rightarrow 0$ as $n\rightarrow \infty$. For any $n\in \mathbb{N}$, there exists $(u_{n})\in \widetilde{\N}_{\e_{n}}$ such that
	$$
	\sup_{u\in \widetilde{\N}_{\e_{n}}} \inf_{y\in M_{\delta}}|\beta_{\e_{n}}(u)-y|=\inf_{y\in M_{\delta}}|\beta_{\e_{n}}(u_{n})-y|+o_{n}(1).
	$$
	Then, it is enough to verify that there exists $(y_{n})\subset M_{\delta}$ such that 
	\begin{equation}\label{3.13}
	\lim_{n\rightarrow \infty} |\beta_{\e_{n}}(u_{n})-y_{n}|=0.
	\end{equation}
	By Lemma \ref{DI}, we can see that $I_{V_{0}}(t|u_{n}|)\leq J_{\e_{n}}(t u_{n})$ for any $t\geq 0$. This fact and recalling that $(u_{n})\subset  \widetilde{\N}_{\e_{n}}\subset  \N_{\e_{n}}$ yield
	$$
	m_{V_{0}}\leq \max_{t\geq 0} I_{V_{0}}(t|u_{n}|)\leq \max_{t\geq 0} J_{\e_{n}}(t u_{n})=J_{\e_{n}}(u_{n})\leq m_{V_{0}}+h(\e_{n})
	$$
	which gives $J_{\e_{n}}(u_{n})\rightarrow m_{V_{0}}$ being $h(\e_{n})\rightarrow 0$ as $n\rightarrow \infty$. \\
	Then, applying Proposition \ref{prop4.1}, we can see that there exists $(\tilde{y}_{n})\subset \R^{3}$ such that $y_{n}=\e_{n}\tilde{y}_{n}\in M_{\delta}$ for $n$ sufficiently large. \\
	Hence
	$$
	\beta_{\e_{n}}(u_{n})=y_{n}+\frac{\int_{\R^{3}}[\Upsilon(\e_{n}z+y_{n})-y_{n}] |u_{n}(z+\tilde{y}_{n})|^{4} \, dz}{\int_{\R^{3}} |u_{n}(z+\tilde{y}_{n})|^{4} \, dz}.
	$$
	Since, up to a subsequence, $|u_{n}|(\cdot+\tilde{y}_{n})$ converges strongly in $H^{s}(\R^{3}, \R)$ and $\e_{n}z+y_{n}\rightarrow y\in M$ for any $z\in\R^{3}$, we can infer that (\ref{3.13}) holds true.
\end{proof}

\noindent
Finally, we give the proof of our multiplicity result.
\begin{proof}[Proof of Theorem \ref{thm1}]
	Fix $\delta>0$. Using Lemma \ref{lem4.1}, Lemma \ref{lem4.2} and Lemma \ref{lem4.4} and arguing as in  \cite[Section $6$]{CL}, we can find $\e_{\delta}>0$ such that for any $\e\in (0, \e_{\delta})$, the diagram
	$$
	M \stackrel{\Phi_{\e}}{\rightarrow}  \widetilde{\N}_{\e} \stackrel{\beta_{\e}}{\rightarrow} M_{\delta}
	$$
	is well-defined and $\beta_{\e}\circ \Phi_{\e}$ is  homotopically equivalent to the embedding $\iota: M\rightarrow M_{\delta}$. 
	This fact and \cite[Lemma 4.3]{BC} imply that
	$$
	\operatorname{cat}_{\widetilde{\N}_{\e}}(\widetilde{\N}_{\e})\geq \operatorname{cat}_{M_{\delta}}(M).
	$$
	In view of the definition $\widetilde{\N}_{\e}$ and Proposition \ref{prop2.2}, we know that $J_{\e}$ verifies the Palais-Smale condition in $\widetilde{\N}_{\e}$ (taking $\e_{\delta}$ smaller if necessary), so we can use  standard Ljusternik-Schnirelmann  theory for $C^{1}$ functionals (see \cite[Theorem $5.20$]{W}) to 
	deduce that $J_{\e}$ restricted to $\N_{\e}$ has at least $\operatorname{cat}_{M_{\delta}}(M)$ critical points. 
	Consequently, by Corollary \ref{cor2.1}, we can see that $J_{\e}$ has at least $\operatorname{cat}_{M_{\delta}}(M)$ critical points in $\h$.
\end{proof}

\section{Concentration phenomenon as $\e\rightarrow 0$}
In this last section we study the behavior of maximum points of the modulus of nontrivial solutions to \eqref{P}.
In order to do this, we first prove the following result in which we combine a suitable Moser-type iteration \cite{Moser}  and an approximation argument inspired by the Kato's inequality \cite{Kato}. 
\begin{lem}\label{moser} 
Let $\e_{n}\rightarrow 0$ and $u_{n}\in \mathcal{N}_{\e_{n}}$ be a solution to \eqref{Pe}. 
Set $v_{n}=|u_{n}|(\cdot+\tilde{y}_{n})$. Then $v_{n}\in L^{\infty}(\R^{3},\R)$ and there exists $C>0$ such that 
$$
\|v_{n}\|_{L^{\infty}(\R^{3})}\leq C \mbox{ for all } n\in \mathbb{N},
$$
where $\tilde{y}_{n}$ is given by Lemma \ref{prop4.1}.
Moreover
$$
\lim_{|x|\rightarrow \infty} v_{n}(x)=0 \mbox{ uniformly in } n\in \mathbb{N}.
$$
\end{lem}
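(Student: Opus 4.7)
The plan is to split the argument into three pieces: (i) a Kato-type inequality reducing the estimate for $|u_n|$ to a non-magnetic scalar subsolution problem, (ii) a Moser iteration producing the uniform $L^\infty$ bound, and (iii) a contradiction argument, based on the strong $H^s$ convergence from Proposition \ref{prop4.1}, for the uniform decay at infinity.

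First I would fix $n$ and write $w_n:=|u_n|$. To handle the nondifferentiability of the modulus I would introduce the regularisation $w_{n,\delta}:=\sqrt{|u_n|^2+\delta^2}-\delta$ and test the equation for $u_n$ against $\varphi=\frac{u_n}{w_{n,\delta}+\delta}\,\eta$ for nonnegative $\eta\in C_c^\infty(\R^3,\R)$, mimicking the distributional Kato inequality used for the magnetic Laplacian. Using the pointwise estimate
\[
\Re\!\left[\bigl(u_n(x)-e^{\imath(x-y)\cdot A_{\e_n}(\frac{x+y}{2})}u_n(y)\bigr)\overline{\Bigl(\tfrac{u_n(x)}{w_{n,\delta}(x)+\delta}-e^{\imath(x-y)\cdot A_{\e_n}(\frac{x+y}{2})}\tfrac{u_n(y)}{w_{n,\delta}(y)+\delta}\Bigr)}\right]\geq \bigl(w_{n,\delta}(x)-w_{n,\delta}(y)\bigr)\bigl(\eta(x)-\eta(y)\bigr)\cdot(\text{something}\geq 0),
\]
and letting $\delta\to 0$, I would conclude that $w_n$ is a weak subsolution of
\[
(-\Delta)^s w_n + V_{\e_n}(x)w_n+\phi^t_{w_n}w_n\leq f(w_n^2)\,w_n\qquad\text{in }\R^3.
\]
In particular, discarding the two nonnegative terms on the left, $w_n$ is a subsolution of $(-\Delta)^s w_n\leq f(w_n^2)w_n$ on $\R^3$. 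Establishing this Kato-type inequality rigorously is the main obstacle; everything afterwards is standard.

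With the scalar subsolution in hand, I would run a classical Moser iteration adapted to the fractional Laplacian, as in \cite{FQT} or \cite{AM}. Using the truncated test function $\varphi_L=w_n w_{n,L}^{2(\beta-1)}$ with $w_{n,L}=\min\{w_n,L\}$ and $\beta>1$, the fractional Sobolev inequality combined with $(f_1)$-$(f_2)$ gives an iteration scheme of the form
\begin{equation*}
\|w_n\|_{L^{\beta\,2_s^*}(\R^3)}\leq C^{1/\beta}\beta^{1/\beta}\,\|w_n\|_{L^{\beta\cdot 2}(\R^3)},
\end{equation*}
provided one first absorbs the superlinear contribution on the right hand side using that $\|w_n\|_{L^{2_s^*}}$ is uniformly bounded (this follows from Proposition \ref{prop4.1}, which gives $v_n\to v$ in $H^s(\R^3,\R)$, hence uniform boundedness in $L^{2_s^*}$ and translation invariance of the $L^r$ norms). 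Iterating on $\beta_k=(2_s^*/2)^k$ and passing to the limit produces a constant $C>0$ independent of $n$ such that $\|w_n\|_{L^\infty(\R^3)}\leq C$; by translation this yields the desired bound for $v_n$.

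For the uniform decay I would argue by contradiction: if the conclusion fails, there exist $\eta>0$, a subsequence, and points $x_n\in\R^3$ with $|x_n|\to\infty$ and $v_n(x_n)\geq\eta$. Thanks to the uniform $L^\infty$ bound just established and to $(f_1)$-$(f_2)$, the right hand side $f(v_n^2)v_n$ is uniformly bounded in $L^\infty$; a local $C^{0,\alpha}$ estimate (\cite[Lemma~3.4]{FQT}) then gives uniform Hölder continuity of $v_n$ on every compact set, hence a uniform neighbourhood $B_r(x_n)$ on which $v_n\geq\eta/2$. This forces
\[
\int_{B_r(x_n)}v_n^2\,dx\geq c\,\eta^2>0\qquad\text{for all }n,
\]
contradicting the strong convergence $v_n\to v$ in $L^2(\R^3)$ (from Proposition \ref{prop4.1}) combined with $|x_n|\to\infty$ and $v\in L^2(\R^3)$, which would give $\int_{B_r(x_n)}v_n^2\to 0$. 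This contradiction completes the proof.
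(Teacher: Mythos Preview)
Your plan for (i) and (ii) is essentially what the paper does, though in reversed order: the paper first runs the Moser iteration directly in the magnetic setting (testing \eqref{Pe} with the complex-valued $u_n\,u_{L,n}^{2(\beta-1)}$ and using a pointwise inequality to drop to the scalar Gagliardo form of $|u_n|$), and only afterward establishes the Kato-type subsolution property of $|u_n|$ via the regularised test function $\frac{u_n}{\sqrt{|u_n|^2+\delta^2}}\varphi$. Either order is fine, and your outline of these two steps is correct.

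The genuine gap is in step (iii). You invoke \cite[Lemma~3.4]{FQT} to obtain uniform local $C^{0,\alpha}$ regularity for $v_n$, but that lemma concerns \emph{solutions} of $(-\Delta)^s u=g$ with $g\in L^\infty$, not subsolutions. All you have for $v_n$ is a weak differential \emph{inequality}; in De Giorgi--Nash--Moser theory subsolutions yield local boundedness from above, but H\"older continuity requires the two-sided oscillation control available only for solutions. Nor can you fall back on regularity of the complex solution $u_n$ itself, since H\"older estimates for $(-\Delta)^s_{A_{\e_n}}$ are not developed here (and uniformity in $n$ would be a separate issue). Without the H\"older step your contradiction collapses: a single pointwise bound $v_n(x_n)\geq\eta$ does not by itself produce a ball of definite $L^2$ mass.

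The paper avoids this via a comparison. Once the subsolution inequality $(-\Delta)^s v_n+V_0 v_n\leq f(v_n^2)v_n=:g_n$ is in hand, let $z_n$ be the \emph{solution} of $(-\Delta)^s z_n+V_0 z_n=g_n$ in $\R^3$, given by convolution with the Bessel kernel. The uniform $L^\infty$ bound plus the strong $H^s$ convergence from Proposition~\ref{prop4.1} give $g_n\to f(v^2)v$ in $L^r(\R^3)$ for every $r\in(2,\infty)$ with $\|g_n\|_{L^\infty}\leq C$; the kernel representation then forces $z_n(x)\to 0$ as $|x|\to\infty$ uniformly in $n$. A maximum-principle comparison yields $0\leq v_n\leq z_n$, and the uniform decay of $v_n$ follows.
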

\begin{proof}
For any $L>0$, we denote by $u_{L,n}:=\min\{|u_{n}|, L\}\geq 0$ and we define $v_{L, n}=u_{L,n}^{2(\beta-1)}u_{n}$ and $w_{L,n}:=|u_{n}| u_{L,n}^{\beta-1}$,  with $\beta>1$ to be determined later.
Taking $v_{L, n}$ as test function in (\ref{Pe}), we get
\begin{align}\label{conto1FF}
&\Re\left(\iint_{\R^{6}} \frac{(u_{n}(x)-u_{n}(y)e^{\imath A(\frac{x+y}{2})\cdot (x-y)})}{|x-y|^{3+2s}} \overline{(u_{n}u_{L,n}^{2(\beta-1)}(x)-u_{n}u_{L,n}^{2(\beta-1)}(y)e^{\imath A(\frac{x+y}{2})\cdot (x-y)})} \, dx dy\right)   \nonumber \\
&=-\int_{\R^{3}}\phi_{|u_{n}|}^{t}|u_{n}|^{2}u_{L,n}^{2(\beta-1)}dx+\int_{\R^{3}} f(|u_{n}|^{2}) |u_{n}|^{2}u_{L,n}^{2(\beta-1)}  \,dx-\int_{\R^{3}} V_{\e_{n}} (x) |u_{n}|^{2} u_{L,n}^{2(\beta-1)} \, dx.
\end{align}
Now, we can see that
\begin{align*}
&\Re\left[(u_{n}(x)-u_{n}(y)e^{\imath A(\frac{x+y}{2})\cdot (x-y)})\overline{(u_{n}u_{L,n}^{2(\beta-1)}(x)-u_{n}u_{L,n}^{2(\beta-1)}(y)e^{\imath A(\frac{x+y}{2})\cdot (x-y)})}\right] \\
&=\Re\Bigl[|u_{n}(x)|^{2}u_{L,n}^{2(\beta-1)}(x)-u_{n}(x)\overline{u_{n}(y)} u_{L,n}^{2(\beta-1)}(y)e^{-\imath A(\frac{x+y}{2})\cdot (x-y)}-u_{n}(y)\overline{u_{n}(x)} u_{L,n}^{2(\beta-1)}(x) e^{\imath A(\frac{x+y}{2})\cdot (x-y)} \\
&+|u_{n}(y)|^{2}u_{L,n}^{2(\beta-1)}(y) \Bigr] \\
&\geq (|u_{n}(x)|^{2}u_{L,n}^{2(\beta-1)}(x)-|u_{n}(x)||u_{n}(y)|u_{L,n}^{2(\beta-1)}(y)-|u_{n}(y)||u_{n}(x)|u_{L,n}^{2(\beta-1)}(x)+|u_{n}(y)|^{2}u^{2(\beta-1)}_{L,n}(y) \\
&=(|u_{n}(x)|-|u_{n}(y)|)(|u_{n}(x)|u_{L,n}^{2(\beta-1)}(x)-|u_{n}(y)|u_{L,n}^{2(\beta-1)}(y)),
\end{align*}
which yields
\begin{align}\label{realeF}
&\Re\left(\iint_{\R^{6}} \frac{(u_{n}(x)-u_{n}(y)e^{\imath A(\frac{x+y}{2})\cdot (x-y)})}{|x-y|^{3+2s}} \overline{(u_{n}u_{L,n}^{2(\beta-1)}(x)-u_{n}u_{L,n}^{2(\beta-1)}(y)e^{\imath A(\frac{x+y}{2})\cdot (x-y)})} \, dx dy\right) \nonumber\\
&\geq \iint_{\R^{6}} \frac{(|u_{n}(x)|-|u_{n}(y)|)}{|x-y|^{3+2s}} (|u_{n}(x)|u_{L,n}^{2(\beta-1)}(x)-|u_{n}(y)|u_{L,n}^{2(\beta-1)}(y))\, dx dy.
\end{align}
For all $t\geq 0$, set
\begin{equation*}
\gamma(t)=\gamma_{L, \beta}(t)=t t_{L}^{2(\beta-1)}
\end{equation*}
where  $t_{L}=\min\{t, L\}$. 
Since $\gamma$ is an increasing function, we have
\begin{align*}
(a-b)(\gamma(a)- \gamma(b))\geq 0 \quad \mbox{ for any } a, b\in \R.
\end{align*}
Let us define the functions 
\begin{equation*}
\Lambda(t)=\frac{|t|^{2}}{2} \quad \mbox{ and } \quad \Gamma(t)=\int_{0}^{t} (\gamma'(\tau))^{\frac{1}{2}} d\tau. 
\end{equation*}
and we note that
\begin{equation}\label{Gg}
\Lambda'(a-b)(\gamma(a)-\gamma(b))\geq |\Gamma(a)-\Gamma(b)|^{2} \mbox{ for any } a, b\in\R. 
\end{equation}
Indeed, for any $a, b\in \R$ such that $a<b$, the Jensen inequality yields
\begin{align*}
\Lambda'(a-b)(\gamma(a)-\gamma(b))&=(a-b)\int_{b}^{a} \gamma'(t)dt\\
&=(a-b)\int_{b}^{a} (\Gamma'(t))^{2}dt \\
&\geq \left(\int_{b}^{a} \Gamma'(t) dt\right)^{2}\\
&=(\Gamma(a)-\Gamma(b))^{2}.
\end{align*}
Analogously, we can prove that $\Lambda'(a-b)(\gamma(a)-\gamma(b))\geq (\Gamma(b)-\Gamma(a))^{2}$ for all $a\geq b$, which implies that \eqref{Gg} holds true.
From \eqref{Gg} we can deduce that
\begin{align}\label{Gg1}
|\Gamma(|u_{n}(x)|)- \Gamma(|u_{n}(y)|)|^{2} \leq (|u_{n}(x)|- |u_{n}(y)|)((|u_{n}|u_{L,n}^{2(\beta-1)})(x)- (|u_{n}|u_{L,n}^{2(\beta-1)})(y)). 
\end{align}
Putting together \eqref{realeF} and \eqref{Gg1}, we can see that
\begin{align}\label{conto1FFF}
\Re\left(\iint_{\R^{6}} \frac{(u_{n}(x)-u_{n}(y)e^{\imath A(\frac{x+y}{2})\cdot (x-y)})}{|x-y|^{3+2s}} \overline{(u_{n}u_{L,n}^{2(\beta-1)}(x)-u_{n}u_{L,n}^{2(\beta-1)}(y)e^{\imath A(\frac{x+y}{2})\cdot (x-y)})} \, dx dy\right) 
\geq [\Gamma(|u_{n}|)]^{2}.
\end{align}
Observing that $\Gamma(|u_{n}|)\geq \frac{1}{\beta} |u_{n}| u_{L,n}^{\beta-1}$ and using the fractional Sobolev inequality $\mathcal{D}^{s,2}(\R^{3}, \R)\subset L^{\2}(\R^{3}, \R)$, we find
\begin{equation}\label{SS1}
[\Gamma(|u_{n}|)]^{2}\geq S_{*} \|\Gamma(|u_{n}|)\|^{2}_{L^{\2}(\R^{3})}\geq \left(\frac{1}{\beta}\right)^{2} S_{*}\||u_{n}| u_{L,n}^{\beta-1}\|^{2}_{L^{\2}(\R^{3})}.
\end{equation}
Taking into account \eqref{conto1FF}, \eqref{conto1FFF}, \eqref{SS1} and property $(4)$ of Lemma \ref{poisson}, we obtain
\begin{align}\label{BMS}
\left(\frac{1}{\beta}\right)^{2} S_{*}\||u_{n}| u_{L,n}^{\beta-1}\|^{2}_{L^{\2}(\R^{3})}+\int_{\R^{3}} V_{\e_{n}} (x)|u_{n}|^{2}u_{L,n}^{2(\beta-1)} dx\leq \int_{\R^{3}} f(|u_{n}|^{2}) |u_{n}|^{2} u_{L,n}^{2(\beta-1)} dx.
\end{align}
Now, by $(f_1)$ and $(f_2)$, we can see that for any $\xi>0$ there exists $C_{\xi}>0$ such that
\begin{equation}\label{SS2}
f(t^{2})t^{2}\leq \xi |t|^{2}+C_{\xi}|t|^{\2} \mbox{ for all } t\in \R.
\end{equation}
Then, fixed $\xi\in (0, V_{0})$ and using \eqref{BMS} and \eqref{SS2} we get 
\begin{equation}\label{simo1}
\|w_{L,n}\|_{L^{\2}(\R^{3})}^{2}\leq C \beta^{2} \int_{\R^{3}} |u_{n}|^{\2}u_{L,n}^{2(\beta-1)}.
\end{equation}
Take $\beta=\frac{\2}{2}$ and fix $R>0$. Since $0\leq u_{L,n}\leq |u_{n}|$ and applying H\"older inequality we have
\begin{align}\label{simo2}
\int_{\R^{3}} |u_{n}|^{\2}u_{L,n}^{2(\beta-1)}dx&=\int_{\R^{3}} |u_{n}|^{\2-2} |u_{n}|^{2} u_{L,n}^{\2-2}dx \nonumber\\
&=\int_{\R^{3}} |u_{n}|^{\2-2} (|u_{n}| u_{L,n}^{\frac{\2-2}{2}})^{2}dx \nonumber\\
&\leq \int_{\{|u_{n}|<R\}} R^{\2-2} |u_{n}|^{\2} dx+\int_{\{|u_{n}|>R\}} |u_{n}|^{\2-2} (|u_{n}| u_{L,n}^{\frac{\2-2}{2}})^{2}dx \nonumber\\
&\leq \int_{\{|u_{n}|<R\}} R^{\2-2} |u_{n}|^{\2} dx+\left(\int_{\{|u_{n}|>R\}} |u_{n}|^{\2} dx\right)^{\frac{\2-2}{\2}} \left(\int_{\R^{3}} (|u_{n}| u_{L,n}^{\frac{\2-2}{2}})^{\2}dx\right)^{\frac{2}{\2}}.
\end{align}
Since $(|u_{n}|)$ is bounded in $H^{s}(\R^{3}, \R)$, we have for  $R$ big enough
\begin{equation}\label{simo3}
\left(\int_{\{|u_{n}|>R\}} |u_{n}|^{\2} dx\right)^{\frac{\2-2}{\2}}\leq  \frac{1}{2\beta^{2}}.
\end{equation}
In view of \eqref{simo1}, \eqref{simo2} and \eqref{simo3} we get
\begin{equation*}
\left(\int_{\R^{3}} (|u_{n}| u_{L,n}^{\frac{\2-2}{2}})^{\2} \right)^{\frac{2}{\2}}\leq C\beta^{2} \int_{\R^{3}} R^{\2-2} |u_{n}|^{\2} dx<\infty
\end{equation*}
and letting the limit as $L\rightarrow \infty$ we obtain $|u_{n}|\in L^{\frac{(\2)^{2}}{2}}(\R^{3},\R)$.

Now, using $0\leq u_{L,n}\leq |u_{n}|$ and passing to the limit as $L\rightarrow \infty$ in \eqref{simo1} we obtain
\begin{equation*}
\|u_{n}\|_{L^{\beta\2}(\R^{3})}^{2\beta}\leq C \beta^{2} \int_{\R^{3}} |u_{n}|^{\2+2(\beta-1)},
\end{equation*}
which implies that
\begin{equation*}
\left(\int_{\R^{3}} |u_{n}|^{\beta\2} dx\right)^{\frac{1}{(\beta-1)\2}}\leq (C \beta)^{\frac{1}{\beta-1}} \left(\int_{\R^{3}} |u_{n}|^{\2+2(\beta-1)}\right)^{\frac{1}{2(\beta-1)}}.
\end{equation*}
For $m\geq 1$ we define $\beta_{m+1}$ inductively so that $\2+2(\beta_{m+1}-1)=\2 \beta_{m}$ and $\beta_{1}=\frac{\2}{2}$. \\
Hence
\begin{equation*}
\left(\int_{\R^{3}} |u_{n}|^{\beta_{m+1}\2} dx\right)^{\frac{1}{(\beta_{m+1}-1)\2}}\leq (C \beta_{m+1})^{\frac{1}{\beta_{m+1}-1}} \left(\int_{\R^{3}} |u_{n}|^{\2\beta_{m}}\right)^{\frac{1}{\2(\beta_{m}-1)}}.
\end{equation*}
Set
$$
D_{m}=\left(\int_{\R^{3}} |u_{n}|^{\2\beta_{m}}\right)^{\frac{1}{\2(\beta_{m}-1)}}.
$$
Using an iterative argument, we can see that there exists $C_{0}>0$ independent of $m$ such that 
$$
D_{m+1}\leq \prod_{k=1}^{m} (C \beta_{k+1})^{\frac{1}{\beta_{k+1}-1}}  D_{1}\leq C_{0} D_{1}.
$$
Taking the limit as $m\rightarrow \infty$ we can deduce that
\begin{equation}\label{UBu}
\|u_{n}\|_{L^{\infty}(\R^{3})}\leq C_{0}D_{1}=:K \mbox{ for all } n\in \mathbb{N}.
\end{equation}
Consequently, by interpolation, $(|u_{n}|)$ strongly converges in $L^{r}(\R^{3}, \R)$ for all $r\in (2, \infty)$. Using $(f_1)$-$(f_2)$, we can also see that $f(|u_{n}|^{2})|u_{n}|$ strongly converges  in the same Lebesgue spaces. 

Next we show that $|u_{n}|$ is a weak subsolution to 
\begin{equation}\label{Kato0}
\left\{
\begin{array}{ll}
(-\Delta)^{s}v+V_{0} v=f(v^{2})v &\mbox{ in } \R^{3} \\
v\geq 0 \quad \mbox{ in } \R^{3}.
\end{array}
\right.
\end{equation}
Take $\varphi\in C^{\infty}_{c}(\R^{3}, \R)$ such that $\varphi\geq 0$. Let $u_{\delta,n}=\sqrt{|u_{n}|^{2}+\delta^{2}}$ for $\delta>0$ and we use $\psi_{\delta, n}=\frac{u_{n}}{u_{\delta, n}}\varphi$ as test function in \eqref{Pe}. 
We are going to prove that $\psi_{\delta, n}\in H^{s}_{\e_{n}}$ for all $\delta>0$ and $n\in \mathbb{N}$. Clearly, $\int_{\R^{3}} V_{\e_{n}} (x) |\psi_{\delta,n}|^{2} dx\leq \int_{\supp(\varphi)} V_{\e_{n}} (x)\varphi^{2} dx<\infty$. 
Now, we note that
\begin{align*}
\psi_{\delta,n}(x)-\psi_{\delta,n}(y)e^{\imath A_{\e}(\frac{x+y}{2})\cdot (x-y)}&=\left(\frac{u_{n}(x)}{u_{\delta,n}(x)}\right)\varphi(x)-\left(\frac{u_{n}(y)}{u_{\delta,n}(y)}\right)\varphi(y)e^{\imath A_{\e}(\frac{x+y}{2})\cdot (x-y)}\\
&=\left[\left(\frac{u_{n}(x)}{u_{\delta,n}(x)}\right)-\left(\frac{u_{n}(y)}{u_{\delta,n}(x)}\right)e^{\imath A_{\e}(\frac{x+y}{2})\cdot (x-y)}\right]\varphi(x) \\
&+\left[\varphi(x)-\varphi(y)\right] \left(\frac{u_{n}(y)}{u_{\delta,n}(x)}\right) e^{\imath A_{\e}(\frac{x+y}{2})\cdot (x-y)} \\
&+\left(\frac{u_{n}(y)}{u_{\delta,n}(x)}-\frac{u_{n}(y)}{u_{\delta,n}(y)}\right)\varphi(y) e^{\imath A_{\e}(\frac{x+y}{2})\cdot (x-y)}
\end{align*}
so we can see that
\begin{align*}
&|\psi_{\delta,n}(x)-\psi_{\delta,n}(y)e^{\imath A_{\e}(\frac{x+y}{2})\cdot (x-y)}|^{2} \\
&\leq \frac{4}{\delta^{2}}|u_{n}(x)-u_{n}(y)e^{\imath A_{\e}(\frac{x+y}{2})\cdot (x-y)}|^{2}\|\varphi\|^{2}_{L^{\infty}(\R^{3})} +\frac{4}{\delta^{2}}|\varphi(x)-\varphi(y)|^{2} \||u_{n}|\|^{2}_{L^{\infty}(\R^{3})} \\
&+\frac{4}{\delta^{4}} \||u_{n}|\|^{2}_{L^{\infty}(\R^{3})} \|\varphi\|^{2}_{L^{\infty}(\R^{3})} |u_{\delta,n}(y)-u_{\delta,n}(x)|^{2} \\
&\leq \frac{4}{\delta^{2}}|u_{n}(x)-u_{n}(y)e^{\imath A_{\e}(\frac{x+y}{2})\cdot (x-y)}|^{2}\|\varphi\|^{2}_{L^{\infty}(\R^{3})} +\frac{4K^{2}}{\delta^{2}}|\varphi(x)-\varphi(y)|^{2} \\
&+\frac{4K^{2}}{\delta^{4}} \|\varphi\|^{2}_{L^{\infty}(\R^{3})} ||u_{n}(y)|-|u_{n}(x)||^{2} 
\end{align*}
where we used $|z+w+k|^{2}\leq 4(|z|^{2}+|w|^{2}+|k|^{2})$ for all $z,w,k\in \C$, $|e^{\imath t}|=1$ for all $t\in \R$, $u_{\delta,n}\geq \delta$, $|\frac{u_{n}}{u_{\delta,n}}|\leq 1$, \eqref{UBu} and $|\sqrt{|z|^{2}+\delta^{2}}-\sqrt{|w|^{2}+\delta^{2}}|\leq ||z|-|w||$ for all $z, w\in \C$.\\
Since $u_{n}\in H^{s}_{\e_{n}}$, $|u_{n}|\in H^{s}(\R^{3}, \R)$ (by Lemma \ref{DI}) and $\varphi\in C^{\infty}_{c}(\R^{3}, \R)$, we can conclude that $\psi_{\delta,n}\in H^{s}_{\e_{n}}$.
Therefore
\begin{align}\label{Kato1}
&\Re\left[\iint_{\R^{6}} \frac{(u_{n}(x)-u_{n}(y)e^{\imath A_{\e}(\frac{x+y}{2})\cdot (x-y)})}{|x-y|^{3+2s}} \left(\frac{\overline{u_{n}(x)}}{u_{\delta,n}(x)}\varphi(x)-\frac{\overline{u_{n}(y)}}{u_{\delta,n}(y)}\varphi(y)e^{-\imath A_{\e}(\frac{x+y}{2})\cdot (x-y)}  \right) dx dy\right] \nonumber\\
&+\int_{\R^{3}} V_{\e_{n}} (x)\frac{|u_{n}|^{2}}{u_{\delta,n}}\varphi dx+\int_{\R^{3}} \phi_{|u_{n}|}^{t} \frac{|u_{n}|^{2}}{u_{\delta,n}}\varphi dx=\int_{\R^{3}} f( |u_{n}|^{2})\frac{|u_{n}|^{2}}{u_{\delta,n}}\varphi dx.
\end{align}
Since$\Re(z)\leq |z|$ for all $z\in \C$ and  $|e^{\imath t}|=1$ for all $t\in \R$, we get
\begin{align}\label{alves1}
&\Re\left[(u_{n}(x)-u_{n}(y)e^{\imath A_{\e}(\frac{x+y}{2})\cdot (x-y)}) \left(\frac{\overline{u_{n}(x)}}{u_{\delta,n}(x)}\varphi(x)-\frac{\overline{u_{n}(y)}}{u_{\delta,n}(y)}\varphi(y)e^{-\imath A_{\e}(\frac{x+y}{2})\cdot (x-y)}  \right)\right] \nonumber\\
&=\Re\left[\frac{|u_{n}(x)|^{2}}{u_{\delta,n}(x)}\varphi(x)+\frac{|u_{n}(y)|^{2}}{u_{\delta,n}(y)}\varphi(y)-\frac{u_{n}(x)\overline{u_{n}(y)}}{u_{\delta,n}(y)}\varphi(y)e^{-\imath A_{\e}(\frac{x+y}{2})\cdot (x-y)} -\frac{u_{n}(y)\overline{u_{n}(x)}}{u_{\delta,n}(x)}\varphi(x)e^{\imath A_{\e}(\frac{x+y}{2})\cdot (x-y)}\right] \nonumber \\
&\geq \left[\frac{|u_{n}(x)|^{2}}{u_{\delta,n}(x)}\varphi(x)+\frac{|u_{n}(y)|^{2}}{u_{\delta,n}(y)}\varphi(y)-|u_{n}(x)|\frac{|u_{n}(y)|}{u_{\delta,n}(y)}\varphi(y)-|u_{n}(y)|\frac{|u_{n}(x)|}{u_{\delta,n}(x)}\varphi(x) \right].
\end{align}
Now, we can observe that
\begin{align}\label{alves2}
&\frac{|u_{n}(x)|^{2}}{u_{\delta,n}(x)}\varphi(x)+\frac{|u_{n}(y)|^{2}}{u_{\delta,n}(y)}\varphi(y)-|u_{n}(x)|\frac{|u_{n}(y)|}{u_{\delta,n}(y)}\varphi(y)-|u_{n}(y)|\frac{|u_{n}(x)|}{u_{\delta,n}(x)}\varphi(x) \nonumber\\
&=  \frac{|u_{n}(x)|}{u_{\delta,n}(x)}(|u_{n}(x)|-|u_{n}(y)|)\varphi(x)-\frac{|u_{n}(y)|}{u_{\delta,n}(y)}(|u_{n}(x)|-|u_{n}(y)|)\varphi(y) \nonumber\\
&=\left[\frac{|u_{n}(x)|}{u_{\delta,n}(x)}(|u_{n}(x)|-|u_{n}(y)|)\varphi(x)-\frac{|u_{n}(x)|}{u_{\delta,n}(x)}(|u_{n}(x)|-|u_{n}(y)|)\varphi(y)\right] \nonumber\\
&+\left(\frac{|u_{n}(x)|}{u_{\delta,n}(x)}-\frac{|u_{n}(y)|}{u_{\delta,n}(y)} \right) (|u_{n}(x)|-|u_{n}(y)|)\varphi(y) \nonumber\\
&=\frac{|u_{n}(x)|}{u_{\delta,n}(x)}(|u_{n}(x)|-|u_{n}(y)|)(\varphi(x)-\varphi(y)) +\left(\frac{|u_{n}(x)|}{u_{\delta,n}(x)}-\frac{|u_{n}(y)|}{u_{\delta,n}(y)} \right) (|u_{n}(x)|-|u_{n}(y)|)\varphi(y) \nonumber\\
&\geq \frac{|u_{n}(x)|}{u_{\delta,n}(x)}(|u_{n}(x)|-|u_{n}(y)|)(\varphi(x)-\varphi(y)) 
\end{align}
where in the last inequality we used 
$$
\left(\frac{|u_{n}(x)|}{u_{\delta,n}(x)}-\frac{|u_{n}(y)|}{u_{\delta,n}(y)} \right) (|u_{n}(x)|-|u_{n}(y)|)\varphi(y)\geq 0
$$
since
$$
h(t)=\frac{t}{\sqrt{t^{2}+\delta^{2}}} \mbox{ is increasing for } t\geq 0 \quad \mbox{ and } \quad \varphi\geq 0 \mbox{ in }\R^{3}.
$$
Then, observing that
$$
\frac{|\frac{|u_{n}(x)|}{u_{\delta,n}(x)}(|u_{n}(x)|-|u_{n}(y)|)(\varphi(x)-\varphi(y))|}{|x-y|^{3+2s}}\leq \frac{||u_{n}(x)|-|u_{n}(y)||}{|x-y|^{\frac{3+2s}{2}}} \frac{|\varphi(x)-\varphi(y)|}{|x-y|^{\frac{3+2s}{2}}}\in L^{1}(\R^{6}),
$$
and $\frac{|u_{n}(x)|}{u_{\delta,n}(x)}\rightarrow 1$ a.e. in $\R^{3}$ as $\delta\rightarrow 0$,
and using \eqref{alves1}, \eqref{alves2}, we can apply the Dominated Convergence Theorem to obtain
\begin{align}\label{Kato2}
&\limsup_{\delta\rightarrow 0} \Re\left[\iint_{\R^{6}} \frac{(u_{n}(x)-u_{n}(y)e^{\imath A_{\e}(\frac{x+y}{2})\cdot (x-y)})}{|x-y|^{3+2s}} \left(\frac{\overline{u_{n}(x)}}{u_{\delta,n}(x)}\varphi(x)-\frac{\overline{u_{n}(y)}}{u_{\delta,n}(y)}\varphi(y)e^{-\imath A_{\e}(\frac{x+y}{2})\cdot (x-y)}  \right) dx dy\right] \nonumber\\
&\geq \limsup_{\delta\rightarrow 0} \iint_{\R^{6}} \frac{|u_{n}(x)|}{u_{\delta,n}(x)}(|u_{n}(x)|-|u_{n}(y)|)(\varphi(x)-\varphi(y)) \frac{dx dy}{|x-y|^{3+2s}} \nonumber\\
&=\iint_{\R^{6}} \frac{(|u_{n}(x)|-|u_{n}(y)|)(\varphi(x)-\varphi(y))}{|x-y|^{3+2s}} dx dy.
\end{align}
From the Dominated Convergence Theorem again (we recall that $\frac{|u_{n}|^{2}}{u_{\delta, n}}\leq |u_{n}|$), Fatou's Lemma and $\varphi\in C^{\infty}_{c}(\R^{3}, \R)$, we can also deduce that
\begin{equation}\label{Kato3}
\lim_{\delta\rightarrow 0} \int_{\R^{3}} V_{\e_{n}} (x)\frac{|u_{n}|^{2}}{u_{\delta,n}}\varphi dx=\int_{\R^{3}} V_{\e_{n}} (x)|u_{n}|\varphi dx\geq \int_{\R^{3}} V_{0}|u_{n}|\varphi dx
\end{equation}
\begin{equation}\label{KatoP}
\liminf_{\delta\rightarrow 0} \int_{\R^{3}} \phi_{|u_{n}|}^{t} \frac{|u_{n}|^{2}}{u_{\delta,n}}\varphi dx\geq \int_{\R^{3}} \phi_{|u|}^{t} |u|\varphi dx\geq 0
\end{equation}
and
\begin{equation}\label{Kato4}
\lim_{\delta\rightarrow 0}  \int_{\R^{3}} f( |u_{n}|^{2})\frac{|u_{n}|^{2}}{u_{\delta,n}}\varphi dx=\int_{\R^{3}} f(|u_{n}|^{2}) |u_{n}|\varphi dx.
\end{equation}
Taking into account \eqref{Kato1}, \eqref{Kato2}, \eqref{Kato3}, \eqref{KatoP} and \eqref{Kato4} we can see that
\begin{align*}
\iint_{\R^{6}} \frac{(|u_{n}(x)|-|u_{n}(y)|)(\varphi(x)-\varphi(y))}{|x-y|^{3+2s}} dx dy+\int_{\R^{3}} V_{0}|u_{n}|\varphi dx\leq 
\int_{\R^{3}} f(|u_{n}|^{2}) |u_{n}|\varphi dx
\end{align*}
for any $\varphi\in C^{\infty}_{c}(\R^{3}, \R)$ such that $\varphi\geq 0$. Hence $|u_{n}|$ is a weak subsolution to \eqref{Kato0}.
Now, we note that $v_{n}=|u_{n}|(\cdot+\tilde{y}_{n})$ solves
\begin{equation}\label{Pkat}
(-\Delta)^{s} v_{n} + V_{0}v_{n}\leq f(v_{n}^{2})v_{n} \mbox{ in } \R^{3}. 
\end{equation}
Let $z_{n}\in H^{s}(\R^{3}, \R)$ be the unique solution to
\begin{equation}\label{US}
(-\Delta)^{s} z_{n} + V_{0}z_{n}=g_{n} \mbox{ in } \R^{3},
\end{equation}
where
$$
g_{n}:=f(v_{n}^{2})v_{n}\in L^{r}(\R^{3}, \R) \quad \forall r\in [2, \infty].
$$
Since \eqref{UBu} implies that $\|v_{n}\|_{L^{\infty}(\R^{3})}\leq C$ for all $n\in \mathbb{N}$, by interpolation we can see that $v_{n}\rightarrow v$ strongly converges in $L^{r}(\R^{3}, \R)$ for all $r\in (2, \infty)$, for some $v\in L^{r}(\R^{3}, \R)$. Using $(f_1)$-$(f_2)$, we also have $g_{n}\rightarrow  f(v^{2})v$ in $L^{r}(\R^{3}, \R)$ and $\|g_{n}\|_{L^{\infty}(\R^{3})}\leq C$ for all $n\in \mathbb{N}$.
Now, being $z_{n}=\mathcal{K}*g_{n}$, where $\mathcal{K}$ is the Bessel kernel (see \cite[Section 3]{FQT}), and arguing as in \cite[Lemma 2.6]{AM}, we can see that $|z_{n}(x)|\rightarrow 0$ as $|x|\rightarrow \infty$ uniformly with respect to $n\in \mathbb{N}$.
Recalling that $v_{n}$ verifies \eqref{Pkat} and $z_{n}$ satisfies \eqref{US}, it is easy to use a comparison argument to deduce that $0\leq v_{n}\leq z_{n}$ a.e. in $\R^{3}$ and for all $n\in \mathbb{N}$. 
Therefore, $v_{n}(x)\rightarrow 0$ as $|x|\rightarrow \infty$ uniformly with respect to $n\in \mathbb{N}$.
\end{proof}

\noindent
Now, we study the concentration of maximum points. Let $u_{\e_{n}}$ be a solution to \eqref{Pe} and $v_{n}=|u_{\e_{n}}|(\cdot+\tilde{y}_{n})$, where $(\tilde{y}_{n})$ is given by Proposition \ref{prop4.1}.
Firstly, we note that
\begin{equation}\label{delta}
\|v_{n}\|_{L^{\infty}(\R^{3})}\geq \delta \mbox{ for some } \delta>0, \forall n\in \mathbb{N}.
\end{equation}
Assume by contradiction that $\|v_{n}\|_{L^{\infty}(\R^{3})}\rightarrow 0$. Then, in view of $(f_1)$, there exists $n_{0}\in \mathbb{N}$ such that
$$
\frac{f(\|v_{n}\|^{2}_{L^{\infty}(\R^{3})})}{\|v_{n}\|^{2}_{L^{\infty}(\R^{3})}}<V_{0} \mbox{ and } \|v_{n}\|^{2}_{L^{\infty}(\R^{3})}<\frac{1}{2}.
$$
Using $\langle J'_{\e_{n}}(u_{\e_{n}}),u_{\e_{n}}\rangle=0$ and Lemma \ref{DI} we get
\begin{align*}
[v_{n}]+V_{0}|v_{n}|_{2}^{2}+\int_{\R^{3}}\phi^{t}_{|v_{n}|}|v_{n}|^{2}&\leq \int_{\R^{3}} \frac{f(|v_{n}|^{2})}{|v_{n}|^{2}}|v_{n}|^{4}dx \\
&\leq \int_{\R^{3}} \frac{f(\|v_{n}\|^{2}_{L^{\infty}(\R^{3})})}{\|v_{n}\|^{2}_{L^{\infty}(\R^{3})}} |v_{n}|^{4}dx \\
&\leq V_{0}\int_{\R^{3}} |v_{n}|^{4}dx \\
&\leq V_{0}  \|v_{n}\|^{2}_{L^{\infty}(\R^{3})} \int_{\R^{3}} |v_{n}|^{2}dx \\
&\leq \frac{V_{0}}{2} \int_{\R^{3}} |v_{n}|^{2}dx 
\end{align*}
which implies that $\|v_{n}\|_{V_{0}}\rightarrow 0$ and this is a contradiction because $\|v_{n}\|_{V_{0}}\rightarrow \|v\|_{V_{0}}\neq 0$.

Now, let $p_{n}$ be a global maximum point of $v_{n}$. In view of the second statement in Lemma \ref{moser} and \eqref{delta}, we can see that $p_{n}\in B_{R}(0)$ for some $R>0$. Thus $z_{\e_{n}}=p_{n}+\tilde{y}_{n}$ is a global maximum point of $|u_{\e_{n}}|$ and as a consequence $\eta_{\e_{n}}=\e_{n}z_{\e_{n}}$ is the maximum point of $\hat{u}_{n}=u_{\e_{n}}(x/\e_{n})$ which is a solution to \eqref{P}. Therefore, $\eta_{\e_{n}}=\e_{n}p_{n}+y_{n}\rightarrow y\in M$ and from he continuity of $V$ it follows that $V(\eta_{\e_{n}})\rightarrow V(y)=V_{0}$ as $n\rightarrow \infty$.

In what follows, we prove the power decay estimate of $|\hat{u}_{n}|$. Using  \cite[Lemma $4.3$]{FQT}, we know that there exists a function $w$ such that 
\begin{align}\label{HZ1}
0<w(x)\leq \frac{C}{1+|x|^{3+2s}},
\end{align}
and
\begin{align}\label{HZ2}
(-\Delta)^{s} w+\frac{V_{0}}{2}w\geq 0 \mbox{ in } \R^{3}\setminus B_{R_{1}}(0) 
\end{align}
for some suitable $R_{1}>0$. \\
Since $v_{n}(x)\rightarrow 0$ as $|x|\rightarrow \infty$ uniformly in $n\in \mathbb{N}$ (see Lemma \ref{moser}), there exists $R_{2}>0$ such that
\begin{equation}\label{hzero}
h_{n}=f(v_{n}^{2})v_{n}\leq \frac{V_{0}}{2}v_{n}  \mbox{ in } \R^{3}\setminus B_{R_{2}}(0).
\end{equation}
Let $w_{n}$ be the unique solution to 
$$
(-\Delta)^{s}w_{n}+V_{0}w_{n}=h_{n} \mbox{ in } \R^{3}.
$$
Then $w_{n}(x)\rightarrow 0$ as $|x|\rightarrow \infty$ uniformly in $n\in \mathbb{N}$, and by comparison $0\leq v_{n}\leq w_{n}$ in $\R^{3}$. From \eqref{hzero} we deduce that
\begin{align*}
(-\Delta)^{s}w_{n}+\frac{V_{0}}{2}w_{n}=h_{n}-\frac{V_{0}}{2}w_{n}\leq 0 \mbox{ in } \R^{3}\setminus B_{R_{2}}(0).
\end{align*}
Put $R_{3}=\max\{R_{1}, R_{2}\}$ and we consider
\begin{align}\label{HZ4}
a=\inf_{B_{R_{3}}(0)} w>0 \mbox{ and } \tilde{w}_{n}=(b+1)w-a w_{n}.
\end{align}
where $b=\sup_{n\in \mathbb{N}} \|w_{n}\|_{L^{\infty}(\R^{3})}<\infty$. 
Our purpose is to show that
\begin{equation}\label{HZ5}
\tilde{w}_{n}\geq 0 \mbox{ in } \R^{3}.
\end{equation}
We note that
\begin{align}
&\lim_{|x|\rightarrow \infty} \sup_{n\in \mathbb{N}}\tilde{w}_{n}(x)=0,  \label{HZ0N} \\
&\tilde{w}_{n}\geq ba+w-ba>0 \mbox{ in } B_{R_{3}}(0) \label{HZ0},\\
&(-\Delta)^{s} \tilde{w}_{n}+\frac{V_{0}}{2}\tilde{w}_{n}\geq 0 \mbox{ in } \R^{3}\setminus B_{R_{3}}(0) \label{HZ00}.
\end{align}
We argue by contradiction, and assume that there exists a sequence $(\bar{x}_{j, n})\subset \R^{3}$ such that 
\begin{align}\label{HZ6}
\inf_{x\in \R^{3}} \tilde{w}_{n}(x)=\lim_{j\rightarrow \infty} \tilde{w}_{n}(\bar{x}_{j, n})<0. 
\end{align}
In light of (\ref{HZ0N}), we obtain that $(\bar{x}_{j, n})$ is bounded, and, up to subsequence, we may suppose that there exists $\bar{x}_{n}\in \R^{3}$ such that $\bar{x}_{j, n}\rightarrow \bar{x}_{n}$ as $j\rightarrow \infty$. 
Hence (\ref{HZ6}) yields
\begin{align}\label{HZ7}
\inf_{x\in \R^{3}} \tilde{w}_{n}(x)= \tilde{w}_{n}(\bar{x}_{n})<0.
\end{align}
Using the minimality of $\bar{x}_{n}$ and the representation formula for the fractional Laplacian \cite[Lemma 3.2]{DPV}, we get  
\begin{align}\label{HZ8}
(-\Delta)^{s}\tilde{w}_{n}(\bar{x}_{n})=\frac{c_{3,s}}{2} \int_{\R^{3}} \frac{2\tilde{w}_{n}(\bar{x}_{n})-\tilde{w}_{n}(\bar{x}_{n}+\xi)-\tilde{w}_{n}(\bar{x}_{n}-\xi)}{|\xi|^{3+2s}} d\xi\leq 0.
\end{align}
On the other hand,  $\bar{x}_{n}\in \R^{3}\setminus B_{R_{3}}(0)$ by (\ref{HZ0}) and (\ref{HZ6}), and in view of (\ref{HZ7}) and (\ref{HZ8}), we can obtain 
$$
(-\Delta)^{s} \tilde{w}_{n}(\bar{x}_{n})+\frac{V_{0}}{2}\tilde{w}_{n}(\bar{x}_{n})<0,
$$
which contradicts (\ref{HZ00}).
Consequently, (\ref{HZ5}) holds true and using (\ref{HZ1}) and $v_{n}\leq w_{n}$ we deduce that
\begin{align*}
0\leq v_{n}(x)\leq w_{n}(x)\leq \frac{(b+1)}{a}w(x)\leq \frac{\tilde{C}}{1+|x|^{3+2s}} \mbox{ for all } n\in \mathbb{N}, x\in \R^{3},
\end{align*}
for some constant $\tilde{C}>0$. 
Recalling the definitions of $v_{n}$ and $\hat{u}_{n}$ we can see that  for all $x\in \R^{3}$
\begin{align*}
|\hat{u}_{n}(x)|&=|u_{\e_{n}}|\left(\frac{x}{\e_{n}}\right)=v_{n}\left(\frac{x}{\e_{n}}-\tilde{y}_{n}\right) \\
&\leq \frac{\tilde{C}}{1+|\frac{x}{\e_{n}}-\tilde{y}_{n}|^{3+2s}} \\
&=\frac{\tilde{C} \e_{n}^{3+2s}}{\e_{n}^{3+2s}+|x- \e_{n} \tilde{y}_{n}|^{3+2s}} \\
&\leq \frac{\tilde{C} \e_{n}^{3+2s}}{\e_{n}^{3+2s}+|x-\eta_{\e_{n}}|^{3+2s}}.
\end{align*}

{\bf Acknowledgements.} 
The author warmly thanks the anonymous referee for her/his useful and nice comments on the paper.

\end{document}